\def\@biblabel#1{}
\renewcommand\@cite[2]{{#1\if@tempswa,\nolinebreak[3] #2\fi}}
\newcommand\NN{\mathbb{N}}
\newcommand\OO{\mathscr{O}}
\newcommand\QQ{\mathbb{Q}}
\newcommand\ZZ{\mathbb{Z}}
\newcommand\pp{\mathfrak{p}}
\newcommand\minus{\smallsetminus}
\newcommand\FF{\mathbb{F}}
\theoremstyle{plain}
\newtheorem{thm}{Theorem}[section]
\newtheorem{cor}[thm]{Corollary}
\newtheorem{lem}[thm]{Lemma}
\newtheorem{prop}[thm]{Proposition}
\newtheorem{conj}[thm]{Conjecture}
\theoremstyle{definition}
\newtheorem{defn}[thm]{Definition}
\theoremstyle{remark}
\newtheorem{rmk}[thm]{Remark}
\newtheorem{notn}[thm]{Notation}
\theoremstyle{remark}
\newtheorem{schk}[thm]{Sanity Check}
\theoremstyle{remark}
\newtheorem{intuition}[thm]{Intuition}
\theoremstyle{remark}
\DeclareMathOperator{\HHom}{\ensuremath{\mathcal{H}\hspace{-.25ex}\mathit{om}}}
\DeclareMathOperator{\Hom}{Hom}
\renewcommand\mod[1]{\ (\mathrm{mod}\ #1)}
\newcommand\qq{{\mathfrak{q}}}
\newcommand{\spec}{\operatorname{Spec}}
\newcommand\colim{\operatorname{colim}}
\newcommand\AAa{\mathbb{A}}
\newcommand\pic{\operatorname{Pic}}
\newcommand{\et}{{\operatorname{\acute et}}}
\newcommand{\mor}{\operatorname{Mor}}
\newcommand\GG{\mathbb{G}}
\newcommand\perf{{\operatorname{perf}}}
\newcommand\ptp{{\neq p}}
\newcommand\plim[1]{\underset{#1}\varprojlim\:}
\newcommand\otop{\mathscr{O}^*_{\operatorname{top}}}
\newcommand\ohat{\widehat{\mathscr{O}}^*(\overline{X})}
\newcommand\xbar{\overline{X}}
\newcommand\hone{H^1_\et(\xbar,\widehat\ZZ(1))}
\renewcommand{\top}{{\operatorname{top}}}
\newcommand\cts{{\operatorname{cts}}}
\newcommand\pix{\pi_1^\et(X,\overline{x})}
\newcommand\pixbar{\pi_1^\et(\xbar,\overline{x})}
\newcommand\pigmbar{\pi_1^\et(\GG_{m,\overline{K}},1)}
\newcommand\pigmtame{\pi_1^\et(\GG_{m,\overline{K}},1)^{t}}
\newcommand\sep{{\operatorname{sep}}}
\newcommand\kbar{\overline{K}}
 \def\l@subsection{\@tocline{2}{0pt}{3pc}{6pc}{}}
\def\l@subsubsection{\@tocline{3}{0pt}{8pc}{8pc}{}}
\newcounter{jdrthmtype}
\newenvironment{oneoffthm*}[2][plain]{%
  \addtocounter{jdrthmtype}{1}%
  \theoremstyle{#1}%
  \newtheorem*{oneoff\thejdrthmtype}{#2}%
  \begin{oneoff\thejdrthmtype}}{%
  \end{oneoff\thejdrthmtype}}
\title{\'Etale Reconstruction for $\FF_p(t)$-Schemes}
\author{Zachary Berens}
\date{}
\begin{document}

\begin{abstract}
    Voevodsky proved that normal schemes of finite type over finitely generated fields of characteristic $0$ can be reconstructed from their \'etale sites. Let $K$ be a field that is finitely generated over $\FF_p(t)$. Grothendieck conjectured that perfections of finite type $K$-schemes can be reconstructed from their \'etale sites. Adapting Voevodsky's methods, we prove this.
    \end{abstract}
\maketitle
\tableofcontents
\pagebreak
\section{Introduction}
Let $k$ be an absolutely finitely generated (AFG) field of characteristic $0$, that is, a field that is finitely generated over $\QQ$. Let $X$ and $Y$ be finite type $k$-schemes. Let $\operatorname{Shv}(X_\et)$ and $\operatorname{Shv}(Y_\et)$ be the \'etale topoi of $X$ and $Y$ respectively. In his letter to Faltings [\href{https://webusers.imj-prg.fr/~leila.schneps/grothendieckcircle/Letters/GtoF.pdf}{LTF}], Grothendieck conjectured that, if $X$ is normal, there is a bijection $$\mor_k(X,Y)\longrightarrow \mor_{\operatorname{Shv}(k_\et)}(\operatorname{Shv}(X_\et), \operatorname{Shv}(Y_\et))$$
where the right-hand side denotes the set of (2)-isomorphism classes of $\operatorname{Shv}(k_\et)$-morphisms of \'etale topoi, and the map sends a morphism of schemes to the induced geometric morphism of topoi given by pullback.\footnote{Grothendieck mentions that there can be at most one isomorphism between any two $\operatorname{Shv}(k_\et)$-morphisms of sites. This rigidity property implies that $\mor_{\operatorname{Shv}(k_\et)}(\operatorname{Shv}(X_\et), \operatorname{Shv}(Y_\et))$ is a set so that we can say ``bijection." I do not know of a proof of this, and I do not know if it is true. But [\cite{CHW}, \href{https://arxiv.org/pdf/2407.19920\#page=13}{2.22 Proposition}] shows it for admissible morphisms (morphisms of topoi for which the induced morphism on underlying topological spaces sends closed points to closed points) which is all one needs to connect Voevodsky's result to Grothendieck's conjecture.} 

Let $X$ be a scheme and let $X_\et$ denote its quasicompact and separated \'etale site whose opens are \'etale, quasicompact, and separated morphisms $U\longrightarrow X$. Voevodsky proved the following:\footnote{Voevodsky mistakenly assumes that $X$ only needs to be nonsingular in codimension $1$. See Remark \ref{r1}.} 
\begin{thm}[\cite{Voe}, \href{https://www.math.ias.edu/vladimir/sites/math.ias.edu.vladimir/files/Etale_topologies_published.pdf\#page=10}{Theorem 3.1}]\label{voethm}
    If $X$ and $Y$ are $k$-schemes of finite type and $X$ is normal, there is a bijection $$\mor_k(X,Y)\longrightarrow\mor_{k_\et}^\bullet(X_\et,Y_\et)$$ where the right-hand side is the set of \emph{($2$-)}isomorphism classes of ``admissible" $\spec(k)_\et$-morphisms from $X_\et$ to $Y_\et$.
\end{thm}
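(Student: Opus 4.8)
The map under consideration sends a $k$-morphism $f\colon X\to Y$ to the geometric morphism $f_\et\colon X_\et\to Y_\et$ given by pullback, and the plan is to recover $f$ from $f_\et$ by recovering in turn its effect on underlying topological spaces and its effect on rational functions. The organizing principle is that, because $X$ is normal (hence reduced, with $\OO_{X,\eta}$ a field at each generic point $\eta$ and a discrete valuation ring at each codimension-one point), a $k$-morphism out of $X$ is determined by the induced map of function fields together with the induced map of underlying spaces; conversely such data glue to a morphism once they are compatible along valuations. Both the space and the function-field data should be extractable from $f_\et$: the space because the points of a coherent topos form a spectral space functorially in geometric morphisms, with admissibility ensuring that closed points are sent to closed points, and the function-field data through Kummer theory, as indicated below.

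\textbf{Injectivity.} Suppose $f_\et\cong g_\et$ as $\spec(k)_\et$-morphisms. Passing to the associated maps of spectral spaces and using admissibility to locate closed points, I would first conclude that $f$ and $g$ agree as maps $|X|\to|Y|$. I would then recover the action on units: the Kummer sequence exhibits $\OO^*(U)/n$ inside $H^1_\et(U,\mu_n)$ for each \'etale $U$, and $H^1_\et(-,\mu_n)$ is intrinsic to the topos, so $f_\et$ determines the pullback on $\varprojlim_n H^1_\et(-,\mu_n)$ and hence, up to the contribution of $\pic$, the map $\OO^*(Y)\to\OO^*(X)$ and in particular the induced map of multiplicative groups of function fields. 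Since a homomorphism of fields is determined by its restriction to the multiplicative group, this pins down the map $K(Y)\to K(X)$; as $f$ and $g$ then induce the same rational map and agree on spaces, normality forces $f=g$.

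\textbf{Surjectivity.} This is the substance of the theorem. Given an admissible $\spec(k)_\et$-morphism $\Phi\colon X_\et\to Y_\et$, I would: (i) extract the continuous map $|X|\to|Y|$ as above; (ii) at each generic point read off from $\Phi$ the induced morphism of Galois categories, that is an outer homomorphism of absolute Galois groups over $\operatorname{Gal}(k)$; (iii) reconstruct from this a genuine field homomorphism $K(Y)\to K(X)$, where the multiplicative group $K(Y)^*$ is first recovered by Kummer theory from the cohomology of $\Phi$ and then the additive structure is pinned down; (iv) using that the resulting rational map $X\dashrightarrow Y$ over $k$ is compatible with the recovered map of spaces, check compatibility with the discrete valuations at codimension-one points and extend to an honest $k$-morphism $f\colon X\to Y$ by normality; and (v) verify $f_\et\cong\Phi$, which holds because the two agree on points and on the Kummer/function-field data and the rigidity from the injectivity argument then forces a $2$-isomorphism.

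The principal obstacle is step (iii): reconstructing the \emph{field} $K(X)$, and in particular its additive structure, which is invisible to Kummer theory alone, out of the Galois-theoretic and multiplicative data carried by $\Phi$, and doing this functorially enough to produce field homomorphisms rather than merely isomorphisms. A secondary difficulty is controlling the $\pic$ contribution when passing between $H^1_\et(-,\mu_n)$ and $\OO^*$, together with the bookkeeping relating topos-theoretic points to scheme-theoretic points; granting these, the extension step (iv) is comparatively routine given normality and separatedness.
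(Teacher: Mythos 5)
Your proposal departs from the route the paper follows (which is Voevodsky's): you try to reconstruct the full field homomorphism $K(Y)\to K(X)$ from the Galois-theoretic and Kummer-theoretic data of $\Phi$ and then spread it out using normality, whereas Voevodsky never reconstructs a function field. He first reduces (your target $Y$ is replaced by $\GG_m$) to showing that every admissible morphism $U_\et\to\GG_{m,\et}$, for $U$ affine and geometrically connected \'etale over $X$, agrees on geometric points with an actual unit of $\OO^*(U)$; the additive structure that your step (iii) needs is obtained for free by covering $\AAa^1$ with the two copies of $\GG_m$ given by $\AAa^1\minus\{0\}$ and $\AAa^1\minus\{1\}$, gluing the two resulting maps, and checking additivity and multiplicativity of the induced map on coordinate rings pointwise on geometric points (this is Proposition \ref{suff}). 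Injectivity is likewise handled more directly, via the identification of geometric points of $X$ with admissible morphisms $\spec(\kbar)_\et\to X_\et$ (Proposition \ref{injprep}) and density of closed points in a reduced finite type scheme.

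The genuine gap is precisely the step you flag as the ``principal obstacle'': recovering the additive structure of $K(X)$ from outer homomorphisms of Galois groups and multiplicative data is the birational anabelian reconstruction problem, which is vastly harder than the theorem being proved, is not available in the functorial Hom-form you need (you must produce field \emph{homomorphisms}, not just characterize isomorphisms), and is simply not how this theorem is proved. A proof proposal whose central step is an unproved problem of at least comparable depth is not a proof. Moreover, what you call a ``secondary difficulty'' --- controlling the $\pic$ contribution when passing from $\varprojlim_n H^1_\et(-,\mu_n)$ back to units --- is in fact the only place normality is used and is the real content: one must show $\widehat{T}(\pic(U))=0$ for every geometrically connected affine $U$ \'etale over $X$, which follows from Mordell--Weil--N\'eron--Lang for normal $U$ and genuinely fails otherwise (Sanity Check \ref{sans} exhibits an \'etale open of the cuspidal cubic whose Picard group has infinitely divisible torsion). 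You also need a decompletion step to pass from a completed unit in $\varprojlim_n\OO^*(\overline{U})/\OO^*(\overline{U})^n$ to an honest unit, which requires the valuation-theoretic Cramer's-rule argument (cf.\ Proposition \ref{2.11}) and the fact that $k^\times$ has no infinitely divisible elements; your proposal does not address this passage at all.
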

As before, the map is given by pullback. An admissible morphism is a morphisms of sites whose induced map on topological spaces sends closed points to closed points (Definition \ref{admissible}; there is a parallel notion of admissibility for geometric morphisms of topoi). An immediate corollary of Voevodsky's result is that two normal schemes $X$ and $Y$ of finite type over $k$ are isomorphic over $k$ if and only if there is a $\spec(k_\et)$-equivalence of sites $X_\et\simeq Y_\et$ [\cite{Voe}, \href{https://www.math.ias.edu/vladimir/sites/math.ias.edu.vladimir/files/Etale_topologies_published.pdf\#page=10}{Corollary 3.1}]. In particular, a normal scheme can be reconstructed from its quasicompact and separated \'etale site. It turns out that morphisms of quasicompact and separated sites are the same as morphisms of topoi in this setting [\cite{CHW}, \href{https://arxiv.org/pdf/2407.19920#page=35}{A.13 Corollary}]. And the topos of sheaves on $X_\et$ is equivalent to the usual \'etale topos if $X$ is qs [\cite{CHW}, \href{https://arxiv.org/pdf/2407.19920#page=35}{A.4 Lemma}]. So Voevodsky showed that Grothendieck's conjecture is true after restricting to admissible morphisms.\footnote{It should be noted that Voevodsky says that his result constitutes a proof of Grothendieck's conjecture. He just did not write down a proof of the equivalence between morphisms of sites and morphisms of topoi.}

Let $K$ be an absolutely finitely generated field of any characteristic, so $K$ is a field that is finitely generated over its prime field. In \emph{Esquisse d'un Programme} [\href{https://webusers.imj-prg.fr/~leila.schneps/grothendieckcircle/EsquisseEng.pdf}{EDP}], Grothendieck made a more general conjecture (that is also intimated in his Letter to Faltings). Following Voevodsky, we have to modify the conjecture by restricting to admissible morphisms.
 \begin{conj}[The Toposic Hom Conjecture]
      Let $\mathscr{C}=\textbf{\emph{Sch}}_{K}^{\operatorname{ft}}[\operatorname{UH}^{-1}]$ be the localization of the category of finite type $K$-schemes at the universal homeomorphisms. Let $X,Y\in\operatorname{Ob}(\mathscr{C})$. Then the natural map
      $$\mor_{\mathscr{C}}(X,Y)\longrightarrow\mor_{\operatorname{Shv}(K_\et)}^\bullet(\operatorname{Shv}(X_\et),\operatorname{Shv}(Y_\et))$$
      is a bijection.
 \end{conj}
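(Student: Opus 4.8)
The plan is to follow Voevodsky's strategy for Theorem \ref{voethm} and replace full Kummer theory by its tame (prime-to-$p$) analogue at every step. First I would check that the map is well defined on $\mathscr{C}$: a universal homeomorphism induces an equivalence of \'etale topoi (topological invariance of the \'etale site), so $X\mapsto\operatorname{Shv}(X_\et)$ factors through the localization $\sch_K^{\operatorname{ft}}[\operatorname{UH}^{-1}]$, and pullback supplies the stated natural transformation. To prove bijectivity I would reduce the target: covering $Y$ by affines and embedding each affine into some $\AAa^n$ reduces the problem to reconstructing $\mor(X,\AAa^1)$ and $\mor(X,\GG_m)$, that is, to reconstructing functorially from the topos the perfected ring of global regular functions and its units. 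I would reduce the source to the normal integral case where the function field and its discrete valuations are available (seminormalization, and in characteristic $p$ the perfection, are universal homeomorphisms and so are harmless in $\mathscr{C}$, while a genuine normalization requires additional care since it need not be a homeomorphism).

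\textbf{Recovering points and units.} Next I would extract the geometric scaffolding directly from the topos: the underlying sober space and, crucially using admissibility (closed points go to closed points), the closed points together with the codimension-one points carrying the valuations of the function field. The units I would reconstruct by prime-to-$p$ Kummer theory. For $n$ prime to $p$ and $X$ normal the Kummer sequence gives
\[
0 \longrightarrow \OO^\times(X)/n \longrightarrow H^1_\et(X, \mu_n) \longrightarrow \pic(X)[n] \longrightarrow 0,
\]
and these cohomology groups are read off the topos functorially. Passing to the limit over $n$ coprime to $p$ reconstructs the prime-to-$p$ profinite completion of $\OO^\times(X)$; combined with the valuation data (a rational function is regular exactly when it has nonnegative valuation at every codimension-one point) this should pin down morphisms to $\GG_m$ and $\AAa^1$, hence to $Y$.

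\textbf{The characteristic-$p$ obstacle.} The essential new difficulty is that $\mu_p$ is infinitesimal rather than \'etale, so the \'etale topos cannot see $p$-th roots of units: the tame Kummer theory recovers $\OO^\times(X)$ only up to $p$-power divisibility, and the lost $p$-primary information is precisely the data collapsed by Frobenius. This is not a defect but the whole point: what the \'etale topos forgets is exactly what becomes invisible after inverting universal homeomorphisms. On the perfection $X^\perf$ the Frobenius is an isomorphism, units are $p$-divisible, and the prime-to-$p$ completion coincides with the full profinite completion; this is why the correct ambient category is $\mathscr{C}=\sch_K^{\operatorname{ft}}[\operatorname{UH}^{-1}]$ and why reconstruction targets perfections. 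Working with $\widehat{\ZZ}(1)_\ptp$ throughout, rather than $\widehat{\ZZ}(1)$, is what makes the argument go through in positive characteristic.

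\textbf{Where the work lies.} I expect the main obstacle to be proving that the prime-to-$p$ reconstruction loses \emph{exactly} the universal-homeomorphism information and nothing more, which is also what forces injectivity: two scheme morphisms inducing $2$-isomorphic topos morphisms must already agree in $\mathscr{C}$. Establishing this requires tight control of tame fundamental groups and of $\hone$ in the base cases, in particular understanding $\pigm$ and the $\widehat{\ZZ}(1)_\ptp$-cohomology for curves, where wild ramification must be quarantined into the $p$-part and shown to be the sole ambiguity. Gluing these base cases by d\'evissage and induction on dimension, while verifying at each stage that the only remaining freedom is Frobenius-theoretic, is the delicate heart of the argument and the place where Voevodsky's characteristic-$0$ reasoning must be genuinely refined rather than merely transcribed.
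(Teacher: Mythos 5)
This statement is a \emph{conjecture} in the paper: the paper proves it only for infinite AFG base fields of positive characteristic (Theorem \ref{main} and its corollaries, plus the citation of [\cite{CHW}] for characteristic $0$) and explicitly leaves the finite-field case open. So no sketch along these lines can be a complete proof of the statement as written unless it says something genuinely new about finite base fields --- and yours does not engage with that case at all.

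Setting that aside, the decisive gap in your outline is the decompletion step. Prime-to-$p$ Kummer theory read off the topos only recovers the completed unit group $\ohat$ sitting inside $\hone$, and the class $\chi_\top(\varphi)$ attached to a topos morphism $\varphi:X_\et\to\GG_{m,\et}$ is a priori just an element of $\hone^{\Gamma_K}$. To produce an honest morphism of schemes you must show this class comes from a genuine unit, and the paper does this in two stages that your proposal omits: (i) it proves $\widehat{T}(\pic(\xbar))=0$ --- using the GJRW structure theorem for Picard groups together with the fact that perfection inverts $p$ on $\pic$ (Lemma \ref{picperf}, Theorem \ref{picx}) --- so that $\operatorname{Im}\chi_\top\subset\hone^{\Gamma_K}=\operatorname{Im}\chi$; and (ii) it runs the decompletion argument (Proposition \ref{2.11}), which evaluates a completed unit $\hat u=g_1^{\varepsilon_1}\cdots g_n^{\varepsilon_n}$ at rational points, applies a \emph{nontrivial valuation on a finite extension of the base field $K$}, and uses a Cramer's-rule determinant argument to force the exponents $\varepsilon_i\in\widehat\ZZ_\ptp$ into $\ZZ[1/p]$. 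Your phrase ``combined with the valuation data \ldots this should pin down morphisms'' papers over exactly step (ii); moreover the valuations you invoke live on codimension-one points of $X$, whereas the argument needs a valuation on $K$ itself, which is why $\operatorname{trdeg}(K)\geq 1$ is required and why the finite-field case remains open. Two further problems: your reduction of the source ``to the normal integral case'' via normalization is not available in $\mathscr{C}$, since normalization is not a universal homeomorphism and hence is not inverted in the localization (the paper instead controls $\pic$ of seminormal, respectively perfect, schemes directly); and you never explain how to compare a topos morphism with a scheme morphism in the first place, which the paper does by identifying $\mor^\bullet_{K_\et}(\spec(\kbar)_\et,X_\et)$ with $\xbar(\kbar)$ (Proposition \ref{injprep}) and checking agreement on geometric points (Propositions \ref{2.4} and \ref{suff}).
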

This is essentially the hom form of the anabelian main conjecture with \'etale topoi instead of \'etale fundamental groups. For this reason, it makes sense to call it the toposic hom conjecture. The \'etale fundamental group factors through the \'etale topos so that we have functors
$$\textbf{Sch}_K^{\operatorname{ft}}\longrightarrow \textbf{Topoi}_K\longrightarrow\textbf{Groups}$$ The hom conjecture cares about the situations in which the composite is fully faithful. We are interested in the situations in which the first functor is fully faithful. The idea is that, in nice cases, universal homeomorphisms should be the only morphisms that the \'etale topos can't see. A universal homeomorphism $X\longrightarrow Y$ is a map for which $|X\times_YZ|\cong|Z|$ for all $Z\longrightarrow Y$. Equivalently, a universal homeomorphism is integral, surjective, and universally injective [\cite{Stacks}, \href{https://stacks.math.columbia.edu/tag/04DF}{Tag 04DF}]. Inverting universal homeomorphisms is necessary because the \'etale topology cannot distinguish between schemes that are universally homeomorphic---this is the ``topological invariance of the \'etale site" [\cite{Stacks}, \href{https://stacks.math.columbia.edu/tag/04DY}{Tag 04DY}]. In particular, if $f:X\longrightarrow Y$ is a universal homeomorphism, the induced morphism between \'etale sites given by pullback is an equivalence. 

A scheme is absolutely weakly normal if any universal homeomorphism into it is an isomorphism. Any scheme $X$ admits a canonical absolute weak normalization $X^{\operatorname{awn}}\longrightarrow X$ which is the initial object in the category of universal homeomorphisms $Y\longrightarrow X$ [\cite{Stacks}, \href{https://stacks.math.columbia.edu/tag/0EUK}{Tag 0EUK}]. Over $\QQ$, the absolute weak normalization is the seminormalization [\cite{Rydh}, \href{https://arxiv.org/pdf/0710.2488#page=40}{ Remark B.8}] (see Definition \ref{sn}; Intuition \ref{intuition}). Over $\FF_p$, it is the perfection (the limit over the absolute Frobenius) [\cite{BS}, \href{https://arxiv.org/pdf/1507.06490#page=11}{Lemma 3.8/Remark 3.9}]. Via the functor that sends a scheme to its absolute weak normalization, the localization of the category of finite type $K$-schemes at the universal homeomorphisms is equivalent to the full subcategory of $K^{\operatorname{awn}}$-schemes that are absolute weak normalizations of a finite type $K$-scheme [\cite{BGH}, \href{https://arxiv.org/pdf/1807.03281.pdf#page=229}{14.3.3 Corollary}]. Since seminormalization preserves being of finite type, and since $\spec(K)$ is seminormal, this localization is equivalent to the category of seminormal $K$-schemes when $\operatorname{char}(K)=0$. The perfection of a scheme of finite type, on the other hand, is almost never of finite type. So when $\operatorname{char}(K)>0$, we work with the category of perfect $K^{\operatorname{awn}}=K^{p^{-\infty}}-$schemes that are universally homeomorphic to a finite type $K$-scheme.

At the beginning of [\cite{Voe}], Voevodsky suggests that, ``with some modifications," his proof ``seems to apply to schemes over finitely generated fields of characteristic $p>0$ [that] have transcendence degree $\geq1$." He was right when it comes to perfections of finite type schemes. Let $K$ be an infinite AFG field.
\begin{oneoffthm*}[plain]{Theorem \ref{main}}
    Let $X=X_0^\perf$ be the perfection of a finite type $K$-scheme and let $Y$ be any finite type $K^{p^{-\infty}}-$scheme. Then the natural map
    $$\mor_{K^{p^{-\infty}}}(X,Y)\longrightarrow\mor_{K_\et}^\bullet(X_\et,Y_\et)$$
    is a bijection.
\end{oneoffthm*}
\begin{oneoffthm*}{Corollary \ref{cor}}
    Let $X$ and $Y$ be perfections of $K$-schemes of finite type. Then the natural map $$\mor_{K^{p^{-\infty}}}(X,Y)\longrightarrow\mor_{K_\et}^\bullet(X_\et,Y_\et)$$
    is bijective.
\end{oneoffthm*}
\begin{oneoffthm*}{Corollary \ref{coriso}}
    Let $X$ and $Y$ be perfections of schemes of finite type over $K$. Then $X\cong Y$ if and only if $X_\et\simeq Y_\et$ (over $K^{p^{-\infty}}$ and $\spec(K)_\et$ respectively).
\end{oneoffthm*}

This settles the toposic hom conjecture in positive characteristic for all but the case of finite fields.
While writing this up, I learned that Carlson, Haine, and Wolf independently show that Voevodsky's methods can be extended to prove reconstruction over infinite, AFG fields of positive characteristic [\cite{CHW}, \href{https://arxiv.org/pdf/2407.19920#page=34}{6.25 Corollary}]. They also show that seminormal schemes of finite type over an AFG field of characteristic $0$ can be reconstructed from their \'etale sites; in particular, the toposic hom conjecture is true over AFG fields of characteristic $0$ [\cite{CHW}, \href{https://arxiv.org/pdf/2407.19920#page=2}{0.3 Theorem}] (see Remark \ref{hdesc}). More recently, Carlson and Stix proved reconstruction for seminormal schemes of finite type over sub $p$-adic fields [\cite{CS}, \href{https://arxiv.org/pdf/2410.22474#page=10}{Theorem 4.1}].
\begin{rmk}\label{r1} Let $k$ be an AFG field of characteristic $0$. For [\cite{Voe}, \href{https://www.math.ias.edu/vladimir/sites/math.ias.edu.vladimir/files/Etale_topologies_published.pdf#page=10}{Theorem, 3.1}], Voevodsky actually only assumes that $X$ is nonsingular in codimension $1$ ($R_1$). This is not quite sufficient. In his proof, he uses a lemma that says that Picard groups of $R_1$ schemes of finite type over $k$ are finitely generated. But the proof of this lemma works only if $X$ is $S_2$ in addition to being $R_1$. There are $R_1$ schemes of finite type over $k$ that fail to be \'etale reconstructible. For example, let $X$ be the scheme obtained from $\AAa^2_k$ by collapsing two linearly independent tangent vectors, creating a cusp. Because of the cusp, $X$ is not seminormal. So it is not normal. But it is $R_1$, so it is not $S_2$. Its seminormalization $\AAa^2_k\longrightarrow X$ induces an equivalence on \'etale sites $\AAa^2_{k,\et}\overset{\sim}{\longrightarrow}X_\et$ whose inverse does not come from a morphism of schemes. So there exists an \'etale open $U$ of $X$ such that $\pic(U)$ has infinitely divisible torsion elements. We can show that the Picard group of $X=\spec(A)$ is not finitely generated. Consider the Milnor pullback square
$$\begin{tikzcd}
A \arrow[r, "f"] \arrow[d, "\iota"'] & {k[x,y]} \arrow[d, "\Tilde{\iota}"]      \\
{k} \arrow[r, "\overline{f}"]    & {k[x,y]/(x,y)^2}
\end{tikzcd}$$
(see [\cite{Wei}, \href{https://sites.math.rutgers.edu/~weibel/Kbook/Kbook.pdf#page=21}{Example 2.6}] for the definition). A little computation shows that $$A=k[x^2,xy,y^2,x^3,y^3, xy^2, yx^2]$$ By [\cite{Wei}, \href{https://sites.math.rutgers.edu/~weibel/Kbook/Kbook.pdf#page=33}{Theorem 3.10}], we get a Mayer-Vietoris sequence
$$1\rightarrow A^\times\rightarrow (A/(x,y)^2)^\times\times k[x,y]^\times\rightarrow (k[x,y]/(x,y)^2)^\times\rightarrow\pic(X)\rightarrow \pic(\AAa^2_k)\times\pic(k)\rightarrow\ldots$$
which simplifies to
$$1\longrightarrow k^\times\longrightarrow k^\times\times k^\times\longrightarrow (k[x,y]/(x,y)^2)^\times\longrightarrow\pic(X)\longrightarrow1$$
where $k^{\times}\longrightarrow k^{\times}\times k^{\times}$ is the diagonal, and $$k^{\times}\times k^{\times}\longrightarrow (k[x,y]/(x,y)^2)^\times=\{a+bx+cy:(a,b,c)\in k^\times\times k\times k\}$$ is given by $$(a,b)\mapsto \Tilde{\iota}(b)\overline{f}(a)^{-1}=ba^{-1}$$ So $$\pic(X)\cong (k[x,y]/(x,y)^2)^\times/k^\times\cong\{1+ax+by:a,b\in k\}\cong k^+\times k^+$$ (c.f. Sanity Check \ref{sans}). Intuitively, the two $k^+$'s correspond to the two tangent vectors we collapsed to create $X$.
\end{rmk}
\textbf{Acknowledgements.} I would like to thank Joe Rabinoff for being an excellent mentor and teacher throughout the course of this project and my time at Duke. I would also like to thank Kirsten Wickelgren for suggesting that I try to generalize Voevodsky's theorem. Finally, I would like to thank Peter Haine for being so encouraging of the project. These results were obtained while receiving funding from the Duke math department's PRUV program in the summer of 2024.
\subsection{Characteristic $0$}
Let $k$ be an AFG field of characteristic $0$. As mentioned before, absolute weak normality is the same as seminormality when we are working over $k$. There is a more concrete and geometric criterion for a scheme to be seminormal. The idea is that we want to avoid having cusp-like singularities. \begin{defn}[\cite{Stacks}, \href{https://stacks.math.columbia.edu/tag/0EUK}{Tag 0EUK}]\label{sn}
    An affine $k$-scheme $X=\spec(A)$ is \emph{seminormal} if for all $x,y\in A$ such that $x^2=y^3$, there exists an $a\in A$ such that $a^2=y$ and $a^3=x$.
\end{defn}
One immediately sees from this definition that the cuspidal cubic $\spec k[x,y]/(y^2-x^3)$ is not seminormal.
\begin{intuition}[\cite{SS}]\label{intuition}
    Seminormal schemes are schemes for which all non-normality comes from maximally transverse gluing. Any non-normal affine scheme $X$ of finite type over $k$ can be constructed as the pushout of a diagram $$\begin{tikzcd}
Y \arrow[r, "\text{finite}", two heads] \arrow[d, "\text{closed}"', hook] & Z \\
\widetilde{X}                                               &  
\end{tikzcd}$$ where $\widetilde{X}$ is the normalization of $X$, $Z$ is the non-normal locus of $X$, and $Y$ is the inverse image of the non-normal locus in $\widetilde{X}$. If $X$ is seminormal, $Y$ is reduced. So there is no ``undue gluing of tangent spaces" in the non-normal locus of a seminormal scheme. The cuspidal cubic is not seminormal because it is given by gluing $\widetilde{X}=\AAa^1$ to $Z=\spec(k)$ along $Y=\spec(k[\varepsilon]/\varepsilon^2)$. The nodal cubic, on the other hand, \emph{is} seminormal because it is given by gluing $\widetilde{X}=\AAa^1$ to $Z=\spec(k)$ along $Y=\spec(k)\coprod\spec(k)$.
\end{intuition}
Let $X$ and $Y$ be finite type $k$-schemes with $X$ normal. Consider morphisms $\mor_k(X,Y)$. Voevodsky's proof of Theorem \ref{voethm} proceeds in three steps:
\begin{enumerate}
    \item Show that it suffices to (a) replace $Y$ with $\GG_m$ and (b) show that, for every affine, geometrically connected scheme $U$ that is \'etale over $X$ and every $\varphi\in\mor_{k_\et}^\bullet(U_\et,\GG_{m,\et})$, there exists a map of $k$-schemes $U\longrightarrow\GG_m$ that coincides with $\varphi$ on geometric points ($\varphi$ is admissible so it gives a map on geometric points: see Proposition \ref{injprep}).
    \item We may replace $U$ with $X$ because schemes \'etale over a normal scheme are also normal. We construct maps
    $$\begin{tikzcd}
\ohat  \arrow[rd, "\chi"]          &       \\
                                                                         & \hone \\
{ \otop(X)} \arrow[ru, "\chi_\top"'] &      
\end{tikzcd}$$ where \begin{align*}
    \ohat&=\plim{n}\OO^*(\xbar)/\OO^*(\xbar)^n \\
    \otop(X)&=\mor^\bullet_{k_\et}(X_\et,\GG_{m,\et})
\end{align*}and show that it suffices for the image of $\chi_\top$ to lie in the image of $\chi$.
    \item Show that the image of $\chi_\top$ lies in the image of $\chi$ when $X$ is normal by showing that the Picard group of a normal $k$-scheme has bounded torsion i.e., that the Tate module $\widehat{T}(\pic(X))$ vanishes.\footnote{Voevodsky makes a typo when he says that $\widehat{T}(\pic(X))=0$ is equivalent to saying that $\pic(X)$ has no infinitely divisible elements. He means no infinitely divisible \emph{torsion} elements.} This follows from Mordell-Weil-N\'eron-Lang which implies that $\pic(X)$ is finitely generated. This is the only point at which normality is used.
\end{enumerate}
\begin{schk}[The Picard Group of the Once-Punctured Cuspidal Cubic]\label{sans}
    Regarding (3), it should be noted that there are non-seminormal schemes with Picard groups that have bounded torsion. For example, the Picard group of the cuspidal cubic over $k$ is $k^+$. For reconstruction, one would need to show that the Picard groups of all geometrically connected affine schemes that are \'etale over the cuspidal cubic have bounded torsion. But there are \'etale opens of the cuspidal cubic with Picard groups of unbounded torsion. An example is the once-punctured cuspidal cupic $X$ (with a puncture at $(1,1)$, away from the cusp). After some computation, one finds that \begin{align*}
        X&=\spec\big(k[x,y,\frac{1}{x-1}]/(y^2-x^3)\cap k[x,y,\frac{1}{y-1}]/(y^2-x^3)\big)\\&=\spec\big(k[x,y,\frac{y+1}{x-1}]/(y^2-x^3)\big)\\&=\spec k[t^2,t^3,\frac{t^3+1}{t^2-1}]
    \end{align*} This is a non-distinguished affine open of the cuspidal cubic. The seminormalization of $X$ is $\GG_m$. So the natural map $$\mor_k(X,\GG_m)\longrightarrow\mor_{k_\et}^\bullet(X_\et,\GG_{m,\et})$$
    can't be surjective because the equivalence $X_\et\overset{\sim}{\longrightarrow}\GG_{m,\et}$, given by the inverse of the equivalence induced by the seminormalization, is not realized by an isomorphism of schemes. One can compute the Picard group of $X$ as follows: Consider the Milnor square $$\begin{tikzcd}
{k\left[t^2,t^3,\frac{t^3+1}{t^2-1}\right]} \arrow[r] \arrow[d] & {k\big[t,\frac{1}{t-1}\big]} \arrow[d]    \\
k \arrow[r]                                                     & {k[\varepsilon]/\varepsilon^2}
\end{tikzcd}$$
(see [\cite{Wei}, \href{https://sites.math.rutgers.edu/~weibel/Kbook/Kbook.pdf#page=21}{Example 2.6}] for the definition of a Milnor square). The top map records the seminormalization of $X$ (which is the same as the normalization) and the map on the right is given by $(t-1)^n\mapsto (-1)^n(1-n\varepsilon)$. Note that $k[t^2,t^3,\frac{t^3+1}{t^2-1}]$ is a subring of $k\big[t,\frac{1}{t-1}\big]$ because $\frac{t^3+1}{t^2-1}=t+\frac{1}{t-1}$. This pullback square is Milnor with respect to the non-normal locus of $X$: $k[t^2,t^3,\frac{t^3+1}{t^2-1}]/(t^2,t^3)=k$ is the ring of functions of the non-normal locus of $X$, the ideal $(t^2,t^3)$ maps isomorphically to $(t^2)$ in $k[t,\frac{1}{t-1}]$, and we have $k[t,\frac{1}{t-1}]/(t^2)\cong k[\varepsilon]/\varepsilon^2$ via the map on the right. By [\cite{Wei}, \href{https://sites.math.rutgers.edu/~weibel/Kbook/Kbook.pdf#page=33}{Theorem 3.10}], we get a Mayer-Vietoris sequence
$$1\longrightarrow k^\times\longrightarrow k\big[t,\frac{1}{t-1}\big]^\times\times k^\times\longrightarrow (k[\varepsilon]/\varepsilon^2)^\times\longrightarrow\pic(X)\longrightarrow\pic(\GG_m)\times\pic(k)\longrightarrow\ldots$$
We have $(k[\varepsilon]/\varepsilon^2)^\times\cong\{a+b\varepsilon:a\in k^\times,b\in k\}$,
so our exact sequence becomes 
$$1\longrightarrow k^\times\longrightarrow k[(t-1)^{\pm1}]^\times\times k^\times\longrightarrow (k[\varepsilon]/\varepsilon^2)^\times\longrightarrow \pic(X)\longrightarrow 1$$
where $k^\times\longrightarrow k[(t-1)^{\pm1}]^\times\times k^\times$ is the diagonal map and $k[(t-1)^{\pm1}]^\times\times k^\times\longrightarrow k[\varepsilon]/\varepsilon^2$ is given by $$(a(t-1)^n,b)\longmapsto b\big(a(-1)^n(1-n\varepsilon)\big)^{-1}=ba^{-1}(-1)^n(1+n\varepsilon)$$ 
(the diagonal copy of $k^\times$ is indeed the kernel of this map). We have $(k[\varepsilon]/\varepsilon^2)^\times/k^\times\cong\{1+c\varepsilon:c\in k\}\cong k^+$ and $\{1+n\varepsilon:n\in\ZZ\}\cong \ZZ< k^+$. So $\pic(X)\cong k^+/\ZZ$. This group evidently has infinitely divisible torsion elements.
\end{schk}
Normal schemes are seminormal, but there are non-normal seminormal schemes e.g., the nodal cubic. It turns out that normality is indeed a suboptimal assumption for reconstructing schemes from their \'etale sites. Voevodsky's proof gives us a method for proving generalizations in characteristic $0$. We start with a scheme $X$ satisfying some property $P$. Then we show that, if $U\longrightarrow X$ is \'etale, $U$ satisfies $P$. Then we show that the Picard groups of $P$ schemes have bounded torsion. Using this strategy and results of [\cite{GJRW}], we will conclude that seminormal, $S_2$ schemes of finite type over $k$ are \'etale reconstructible.
\begin{lem}[\cite{Rush}, \href{https://www.sciencedirect.com/science/article/pii/0022404982900329}{Lemma 1.10}]
    If $U\longrightarrow X$ is \'etale and $X$ is a seminormal $k$-scheme of finite type, then $U$ is also seminormal.
\end{lem}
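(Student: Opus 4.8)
The plan is to prove the stronger statement that the formation of the seminormalization commutes with \'etale base change; the lemma then follows immediately, since if $X=\spec A$ is seminormal and $A\to B$ is \'etale, the seminormalization of $B$ is $A^{\operatorname{sn}}\otimes_A B=A\otimes_A B=B$, so that $B$ is its own seminormalization. Because seminormality can be checked on an affine (indeed Zariski-local) cover and is stable under localization, I would first reduce to the case where $X=\spec A$ and $U=\spec B$ are affine with $A\to B$ \'etale. Recall also that we are in characteristic $0$, so seminormality coincides with absolute weak normality, and a seminormal ring is in particular reduced; since \'etale morphisms preserve reducedness, $B$ is reduced as well.

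The first input is that normalization commutes with \'etale base change. Writing $\widetilde A$ for the normalization of $A$, the ring $\widetilde A\otimes_A B$ is \'etale over the normal ring $\widetilde A$, hence normal (\'etale morphisms have geometrically regular fibers, so they preserve normality), while being finite over $B$ and sharing its total ring of fractions; therefore $\widetilde A\otimes_A B=\widetilde B$. Equivalently, $\widetilde U=\widetilde X\times_X U$.

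The heart of the argument is to package the seminormalization through the conductor (pushout) presentation from Intuition \ref{intuition}: $X$ is recovered as the pushout of $\widetilde X\hookleftarrow Y\twoheadrightarrow Z$, where $Z$ is the non-normal locus and $Y\subseteq\widetilde X$ is its preimage, and $X$ is seminormal precisely when $Y$ is reduced. I would show that each piece of this square commutes with the \'etale base change $U\to X$: the non-normal locus of $U$ is the preimage of $Z$, and the preimage of the non-normal locus in $\widetilde U$ is $Y_U:=Y\times_X U$. Since $Y_U\to Y$ is \'etale and $Y$ is reduced (as $X$ is seminormal), $Y_U$ is reduced as well; hence $U$ is seminormal.

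The main obstacle is the verification that this conductor/pushout data is compatible with \'etale base change---that forming the non-normal locus, its preimage in the normalization, and the gluing all commute with $-\otimes_A B$. This rests on the flatness of $A\to B$ together with the normalization base-change identity above, and on the fact that \'etale morphisms both preserve and reflect reducedness and normality; the subtle point is precisely that the transversality (reducedness of $Y$) witnessing seminormality is not destroyed by base change, which is exactly where the geometric regularity of the fibers of an \'etale map is used. Once these compatibilities are in place, the chain $B=A\otimes_A B=A^{\operatorname{sn}}\otimes_A B=B^{\operatorname{sn}}$ closes the argument.
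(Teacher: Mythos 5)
The paper offers no proof of this lemma to compare against: it is imported verbatim from [Rush, Lemma 1.10]. Judged on its own merits, most of your reductions are sound: \'etale maps preserve reducedness, normalization commutes with \'etale base change by the argument you give, and since $\widetilde A/A$ is a finite $A$-module, flatness gives $\mathfrak{c}_B=\mathfrak{c}B$ for the conductors, so the conductor square does pull back along $U\to X$ as you claim. The gap is the criterion you extract from Intuition \ref{intuition}: you assert that ``$X$ is seminormal precisely when $Y$ is reduced,'' but the Intuition (and the lemma of Traverso behind it) gives only the forward implication. The converse --- which is exactly the step ``$Y_U$ is reduced, hence $U$ is seminormal'' that closes your argument --- is false as soon as the non-normal locus has positive dimension.

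Concretely, take $\widetilde A=k[s,t]$ and
$$A=\{f\in k[s,t]\ :\ f(s,0)=f(s,1)\in k[s^2,s^3]\}=k[s^2,s^3]\oplus (t^2-t)\,k[s,t].$$
One checks that $\widetilde A$ is the normalization of $A$ and that the conductor is $\mathfrak{c}=(t^2-t)$, which is radical; so $Y\cong \AAa^1\sqcup\AAa^1$ is reduced and $X=\spec A$ is the pushout of $\widetilde X\hookleftarrow Y\twoheadrightarrow Z$ with $Z=\spec k[s^2,s^3]$ the cuspidal cubic. Yet $s^2,s^3\in A$ while $s\notin A$, so $A$ fails Definition \ref{sn}: your argument as written would ``prove'' this $A$ seminormal. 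The point is that reducedness of $Y$ does not see whether the pair $A/\mathfrak{c}\subseteq \widetilde A/\mathfrak{c}$ (here $k[s^2,s^3]$ sitting diagonally in $k[s]\times k[s]$) is itself subintegrally closed, and seminormality of $A$ is equivalent to radicality of $\mathfrak{c}$ \emph{together with} that condition. To repair the proof you must either iterate the conductor-square argument down the dimension of the non-normal locus (the inductive step again requiring \'etale invariance of relative seminormality), or do what Greco--Traverso actually do: base-change Traverso's pointwise gluing description $A^{+}=\{b\in\widetilde A\ :\ b\in A_\pp+\operatorname{Jac}(\widetilde A\otimes_A A_\pp)\ \text{for all}\ \pp\}$ directly, which is where the real work in the lemma lies.
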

The following two lemmas are well known. We give proofs for completeness.
\begin{lem}
    Being $S_2$ is an \'etale-local property.
\end{lem}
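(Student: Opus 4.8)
The plan is to reduce the statement to a local computation of depth and dimension and then invoke the standard behavior of Serre's condition under flat local homomorphisms. Recall that a locally Noetherian scheme $X$ is $S_2$ precisely when $\operatorname{depth}(\mathcal{O}_{X,x}) \geq \min(2, \dim \mathcal{O}_{X,x})$ for every point $x \in X$; this checks all primes automatically, since localizations of local rings of points are themselves local rings of points. Thus, to show that being $S_2$ is \'etale-local, it suffices to prove that for an \'etale morphism $f \colon U \to X$ and a point $u \in U$ lying over $x = f(u)$, the local ring $\mathcal{O}_{U,u}$ is $S_2$ if and only if $\mathcal{O}_{X,x}$ is. Granting this, ascent along $f$ is immediate, and if $f$ is in addition surjective then descent follows by choosing, for each $x \in X$, some preimage $u \in U$.

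The key input is that an \'etale morphism is flat with $0$-dimensional, regular fibers. Concretely, the induced local homomorphism $\varphi \colon \mathcal{O}_{X,x} \to \mathcal{O}_{U,u}$ is flat, and its fiber $\mathcal{O}_{U,u} / \mathfrak{m}_x \mathcal{O}_{U,u}$ is a finite separable extension of the residue field $\kappa(x)$---in particular a field, hence of depth and dimension $0$. First I would cite the dimension formula for flat local homomorphisms to get $\dim \mathcal{O}_{U,u} = \dim \mathcal{O}_{X,x} + \dim(\mathcal{O}_{U,u}/\mathfrak{m}_x\mathcal{O}_{U,u}) = \dim \mathcal{O}_{X,x}$. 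Next I would cite the corresponding depth formula for flat local homomorphisms (e.g.\ Matsumura, \emph{Commutative Ring Theory}, Theorem 23.3, or the Stacks project), which gives $\operatorname{depth} \mathcal{O}_{U,u} = \operatorname{depth} \mathcal{O}_{X,x} + \operatorname{depth}(\mathcal{O}_{U,u}/\mathfrak{m}_x\mathcal{O}_{U,u}) = \operatorname{depth} \mathcal{O}_{X,x}$. Combining these two equalities shows that the inequality $\operatorname{depth} \geq \min(2, \dim)$ holds at $u$ if and only if it holds at $x$, which is exactly what is needed.

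Since both steps are appeals to standard commutative algebra, the only point requiring genuine care---and the closest thing to an obstacle here---is verifying that the fiber ring is honestly a \emph{field} rather than merely Artinian, as this is what forces the fiber to contribute $0$ to both depth and dimension. This is precisely where unramifiedness of $f$ enters: the fibers of an \'etale morphism are \'etale $\kappa(x)$-algebras, hence finite products of finite separable field extensions, so the local ring of the fiber at $u$ is a single such field. Everything else is formal bookkeeping of the flat base-change formulas.
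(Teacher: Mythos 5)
Your proof is correct and follows essentially the same route as the paper: both arguments reduce to the flat local homomorphism $\mathcal{O}_{X,x}\to\mathcal{O}_{U,u}$ induced by an \'etale map having zero-dimensional (field) fibers, so that dimension and depth are both preserved, with descent for surjective \'etale maps following by the same computation. Your version simply spells out the standard dimension and depth formulas that the paper invokes implicitly.
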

\begin{proof}
    Let $X=\spec(A)$. Let $U=\spec(B)\longrightarrow X$ be \'etale. Take $\qq\in\spec(B)$ lying above $\pp\in\spec(A)$. Then $A_\pp\longrightarrow B_\qq$ is an \'etale map of local rings, so it is faithfully flat. Hence all primes in $B$ of height $\geq 2$ lie above the primes in $A$ of height $\geq 2$. Since $A_\pp\longrightarrow B_\qq$ is \'etale, $\operatorname{depth}(A_\pp)=\operatorname{depth}(B_\qq)$. Conversely, if $U\longrightarrow X$ is \'etale and surjective and $U$ is $S_2$ then $X$ is $S_2$ by the same argument.
\end{proof}

\begin{lem}\label{pics2}
    Let $X$ be a Noetherian $S_2$ scheme. Let $Z\subset X$ be a closed subscheme of codimension $\geq2$. Then we get an injection
    $\pic(X)\hookrightarrow\pic(X\minus Z)$.
\end{lem}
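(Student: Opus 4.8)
The plan is to identify the kernel of the restriction map $\pic(X)\to\pic(X\minus Z)$ with the set of line bundles on $X$ that become trivial after removing $Z$, and then to show that any such line bundle was already trivial on $X$. The whole point is a ``Hartogs-type'' phenomenon: for an $S_2$ scheme, sections of a line bundle extend uniquely across a closed subset of codimension $\geq 2$. Once we have this, a trivialization defined only on $X\minus Z$ can be pushed back onto $X$. Write $j\colon U:=X\minus Z\hookrightarrow X$ for the open immersion.

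First I would record the depth computation. For any point $x\in Z$, the closure $\overline{\{x\}}$ is contained in $Z$, so its codimension is at least $\operatorname{codim}(Z,X)\geq 2$; equivalently $\dim\OO_{X,x}\geq 2$. Since $X$ is $S_2$, this forces $\operatorname{depth}\OO_{X,x}\geq\min(2,\dim\OO_{X,x})=2$. Hence the depth of $\OO_X$ along $Z$ is at least $2$, and because a line bundle $L$ is locally isomorphic to $\OO_X$ the same bound holds for $L$. The standard vanishing theorem for local cohomology then gives $H^0_Z(L)=H^1_Z(L)=0$. Feeding this into the local cohomology exact sequence
$$0\longrightarrow H^0_Z(L)\longrightarrow\Gamma(X,L)\longrightarrow\Gamma(U,L)\longrightarrow H^1_Z(L)\longrightarrow\cdots$$
shows that restriction $\Gamma(X,L)\to\Gamma(U,L)$ is an isomorphism for every line bundle $L$ on $X$ (in particular for $L=\OO_X$, and for an arbitrary $L$ together with its dual $L^{-1}$).

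Now suppose $[L]$ lies in the kernel, so there is a trivialization $\OO_U\xrightarrow{\sim}L|_U$. This amounts to a generating section $s\in\Gamma(U,L)$ together with a generating section $t\in\Gamma(U,L^{-1})$ whose image under the pairing $L\otimes L^{-1}\cong\OO_X$ is $1\in\Gamma(U,\OO_X)$. Applying the isomorphism above to $L$ and to $L^{-1}$, I would extend $s$ and $t$ uniquely to $\tilde s\in\Gamma(X,L)$ and $\tilde t\in\Gamma(X,L^{-1})$. Their product $\tilde s\otimes\tilde t\in\Gamma(X,\OO_X)$ restricts to $1$ on $U$, and since $\Gamma(X,\OO_X)\to\Gamma(U,\OO_X)$ is injective we conclude $\tilde s\otimes\tilde t=1$ on all of $X$. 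Viewing $\tilde s\colon\OO_X\to L$ and $\tilde t\colon L\to\OO_X$ via $\tilde t\in\Gamma(X,L^{-1})=\Hom(L,\OO_X)$, both composites are multiplication by $\tilde s\otimes\tilde t=1$, so $\tilde s$ is an isomorphism and $L\cong\OO_X$.

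The genuinely load-bearing step is the depth/local-cohomology vanishing, and the only subtlety I anticipate is bookkeeping: the extension argument must be run for both $L$ and $L^{-1}$ so that I recover an actual isomorphism of line bundles rather than merely a nonvanishing global section of $L$. Everything else—the codimension-to-depth translation and the exactness of the local cohomology sequence—is standard for Noetherian schemes and needs no finiteness hypothesis beyond what is assumed.
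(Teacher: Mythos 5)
Your proposal is correct and follows essentially the same route as the paper: both arguments derive the isomorphism $\Gamma(X,L)\xrightarrow{\sim}\Gamma(X\minus Z,L)$ from the vanishing of $H^0_Z$ and $H^1_Z$ guaranteed by the $S_2$ condition, and both extend the trivialization together with its inverse (your $L$ and $L^{-1}$ playing the role of the paper's $\HHom(\OO_X,\mathscr{L})$ and $\HHom(\mathscr{L},\OO_X)$) to conclude $L\cong\OO_X$. Your explicit codimension-to-depth computation just spells out what the paper cites as the cohomological characterization of $S_2$.
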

\begin{proof}
    By the cohomological characterization of $S_2$, we have $H^0_Z(X,\mathscr{F})=H^1_Z(X,\mathscr{F})=0$ for any coherent sheaf $\mathscr{F}$ with support on $X$ (in general, a scheme is $S_n$ if and only if $H^i_Z(X,\mathscr{F})=0$ for all $i<n$ for all closed $Z\subset X$ of codimension $\geq n$). So the long exact sequence in cohomology with closed support $Z$ $$H^0_Z(X,\mathscr{F})\longrightarrow H^0(X,\mathscr{F})\longrightarrow H^0(X\minus Z,\mathscr{F})\longrightarrow H^1_Z(X,\mathscr{F})\longrightarrow\ldots$$
    yields an isomorphism
    $$\operatorname{res}_{X\minus Z}(\mathscr{F}):H^0(X,\mathscr{F})\overset{\sim}{\longrightarrow} H^0(X\minus Z,\mathscr{F})$$
    Let $\mathscr{L}\in\pic(X)$ be a line bundle. Let $\mathscr{H}=\HHom(\mathscr{L},\OO_X)$ and $\mathscr{H}'=\HHom(\OO_X,\mathscr{L})$. Assume that $\mathscr{L}|_{X\minus Z}\cong \OO_{X\minus Z}$ and choose an isomorphism $s\in H^0(X\minus Z,\mathscr{H})$. Its inverse $s^{-1}$ is in $H^0(X\minus Z,\mathscr{H}')$. So $s\circ s^{-1}=\operatorname{id}_{\mathscr{L}|_{X\minus Z}}$ and $s^{-1}\circ s=\operatorname{id}_{\OO_{X\minus Z}}$. Since $\operatorname{res}_{X\minus Z}(\mathscr{H})$ and $\operatorname{res}_{X\minus Z}(\mathscr{H}')$ are isomorphisms, we get an isomorphism $\mathscr{L}\cong \OO_X$ from the inverse images of $s$ and $s^{-1}$ in $H^0(X,\mathscr{H})$ and $H^0(X,\mathscr{H}')$.
\end{proof}
\begin{thm}\label{sns2}
Let $X$ be a seminormal, $S_2$ scheme of finite type over $k$. Let $Y$ be any $k$-scheme of finite type. The map 
$$\mor_k(X,Y)\longrightarrow\mor^\bullet_{k_\et}(X_\et,Y_\et)$$
is a bijection.
    
\end{thm}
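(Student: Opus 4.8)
The plan is to run Voevodsky's three-step proof of Theorem \ref{voethm} essentially verbatim, replacing the class of normal schemes by the class of seminormal, $S_2$ schemes. Step (1)---the reduction to $Y=\GG_m$ and to producing, for each affine geometrically connected \'etale $U\longrightarrow X$ and each admissible $\varphi\in\mor_{k_\et}^\bullet(U_\et,\GG_{m,\et})$, a morphism of schemes $U\longrightarrow\GG_m$ that agrees with $\varphi$ on geometric points---is formal and never uses normality; injectivity is supplied by admissibility exactly as in Proposition \ref{injprep}. The key point for Step (2) is that the reduction ``replace $U$ with $X$'' survives: by Rush's lemma an \'etale scheme over a seminormal $k$-scheme is seminormal, and by the preceding lemma being $S_2$ is \'etale-local, so every $U$ \'etale over a seminormal $S_2$ scheme is again seminormal and $S_2$. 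It therefore suffices to build the maps $\chi$ and $\chi_\top$ of Step (2) and to prove, for every seminormal $S_2$ scheme $X$ of finite type over $k$, that the image of $\chi_\top$ lies in the image of $\chi$.

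By Voevodsky's Step (3) this containment holds as soon as the Tate module $\widehat{T}(\pic(X))$ vanishes. So the whole theorem reduces to the Picard-theoretic assertion
$$\widehat{T}(\pic(X))=0 \quad\text{for every seminormal, }S_2\text{ scheme }X\text{ of finite type over }k.$$
Crucially, one should not expect $\pic(X)$ to be finitely generated, as it was in the normal case: the nodal cubic is seminormal and $S_2$ but has $\pic=k^\times$, which is not finitely generated. What saves us is that $k^\times$ has only finitely many torsion elements, so $\widehat{T}(k^\times)=0$; the breakdown of reconstruction for the cuspidal cubic in Sanity Check \ref{sans} came instead from an \emph{additive} summand $k^+/\ZZ$, and seminormality is precisely the hypothesis that rules out such additive contributions.

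To prove the vanishing I would argue as follows. First, because $X$ is $S_2$, Lemma \ref{pics2} gives $\pic(X)\hookrightarrow\pic(X\minus Z)$ for any closed $Z$ of codimension $\geq 2$; since $\widehat{T}(-)$ is left exact, I may delete such $Z$ and thereby arrange that the non-normal locus is pure of codimension $1$. Next I would compare $X$ with its normalization $\nu\colon\widetilde{X}\longrightarrow X$ via the conductor (Milnor) square and its Mayer--Vietoris sequence, which presents $\pic(X)$ in terms of $\pic(\widetilde{X})$ together with the unit and Picard groups of the conductor subscheme $Z=V(\mathfrak c)$ and of its preimage $W=\nu^{-1}(Z)$. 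The normalization $\widetilde{X}$ is normal, so $\pic(\widetilde{X})$ is finitely generated by Mordell--Weil--N\'eron--Lang and has vanishing Tate module. The decisive input is seminormality: it forces the gluing locus $W=\nu^{-1}(Z)$ to be reduced (Intuition \ref{intuition}), so that the comparison of units along $W$ involves only reduced rings and hence contributes a \emph{multiplicative} (torus-like), not additive, group. Consequently the relative Picard group $\ker\big(\pic(X)\to\pic(\widetilde{X})\big)$ is controlled by units of reduced finite-type $k$-schemes and by the $k$-points of a semi-abelian variety; over an AFG field of characteristic $0$ both have only finitely many torsion elements---roots of unity are finite and $\pic^0(\widetilde{X})(k)$ has finite torsion---so $\widehat{T}(\pic(X))=0$.

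The main obstacle is exactly this last Picard computation. One must make precise that seminormality eliminates every unipotent (additive) contribution to $\pic(X)$, so that the relative Picard group is genuinely semi-abelian and therefore of bounded torsion over $k$; this is where characteristic $0$ and seminormality are both indispensable. A further subtlety is that the conductor strata $Z$ and $W$ need not themselves be seminormal $S_2$, so a naive induction on dimension does not close, and one instead needs a direct structural identification of the relative Picard group. It is precisely here that I would invoke the results of \cite{GJRW} on the Picard groups of seminormal schemes. Steps (1) and (2), by contrast, are Voevodsky's and use only the two \'etale-locality lemmas proved above.
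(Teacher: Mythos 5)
Your proposal is correct and follows essentially the same route as the paper: reduce via Voevodsky's steps (1)--(2) using the \'etale-locality of seminormality (Rush) and of $S_2$, then prove $\widehat{T}(\pic(X))=0$ by deleting a codimension-$\geq 2$ locus via Lemma \ref{pics2}, analyzing the conductor Milnor square's Mayer--Vietoris sequence, using seminormality to make the gluing locus reduced, and invoking \cite{GJRW} for the structure of the relative Picard group. The paper pins down the details you defer---it removes the non-normal locus of the conductor subscheme $Z$ so that $Z$ becomes normal with finitely generated Picard group, and cites [GJRW, Proposition 1.6 and Theorems 6.4(2), 6.5] to place $\pic(X)$ in the extension-closed class of groups of the form (finite)$\,\oplus\,$(countably generated free abelian)---but your plan identifies the same decisive inputs.
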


\begin{proof}[Proof]
    It suffices to show that $\pic(X)$ has bounded torsion. As mentioned above in (1), we may reduce to the case in which $X$ is affine. We reproduce the argument given in [\cite{GJRW}, \href{https://arxiv.org/pdf/alg-geom/9410031\#page=25}{Theorem 6.5}] which shows that $$\pic(X)\cong \text{finite group}\oplus\text{countably generated free abelian group}$$ Let $\mathcal{C}$ be the class of groups that can be written as the direct sum of a finite group and a countably generated free abelian group. Let $X=\spec(A)$ and let $\widetilde{X}=\spec(\widetilde{A})$ be the normalization of $X$. Let $I=\{x\in A:x\widetilde{A}\subset A\}$ be the conductor of $A$ in $\widetilde{A}$ (which is an ideal of both $A$ and $\widetilde{A}$); the non-normal locus of $X$ is $Z=\spec(A/I)$. The non-normal locus $Q$ of $Z$ is of codimension $\geq 2$ in $X$ (because $Q$ is of codimension $\geq 1$ in $Z$, and $Z$ is of codimension $\geq 1$ in $X$). It is also closed in $X$. So by Lemma \ref{pics2}, $\pic(X)\hookrightarrow\pic(X\minus Q)$. Hence, we may replace $X$ with $X\minus Q$ and proceed under the assumption that $Z$ is normal. In particular, $\pic(Z)$ is finitely generated. We have a pushout square
    $$
      \begin{tikzcd}
Y \arrow[r, "\text{finite}", two heads] \arrow[d, "\text{closed}"', hook] & Z \arrow[d] \\
\widetilde{X} \arrow[r]                                                   & X          
\end{tikzcd}
    $$
(see [\cite{Wei}, \href{https://sites.math.rutgers.edu/~weibel/Kbook/Kbook.pdf#page=21}{Example 2.6}]) where $Y$ is the inverse image of $Z$ in $\widetilde{X}$. Since $X$ is seminormal, $Y$ is reduced. The pushout is a Milnor square, so we get a Mayer–Vietoris sequence 
$$\ldots\longrightarrow\widetilde{A}^\times\oplus(A/I)^\times\longrightarrow(\widetilde{A}/I)^\times\longrightarrow\pic(X)\longrightarrow\pic(\widetilde X)\oplus\pic(Z)\longrightarrow\pic(Y)\longrightarrow\ldots$$
Let $\Lambda$ denote the integral closure of $k$ in $\widetilde{A}/I$. Let $E_1$ denote the integral closure of $k$ in $A/I$. And let $E_2$ denote the image in $\Lambda$ of the integral closure of $k$ in $A$. Let $D$ denote the kernel of the map $\pic(X)\longrightarrow \pic(\widetilde X)\oplus\pic(Z)$. Since $Y$ is reduced, [\cite{GJRW}, \href{https://arxiv.org/pdf/alg-geom/9410031#page=4}{Proposition 1.6}] gives us the finitely generated term in the exact sequence
$$\text{finitely generated}\longrightarrow \Lambda^*/E_1^*E_2^*\longrightarrow D\longrightarrow 0.$$
By [\cite{GJRW}, \href{https://arxiv.org/pdf/alg-geom/9410031#page=25}{Theorem 6.4(2)}], $\Lambda^*/E_1^*E_2^*\in\mathcal{C}$ because $Y$ is reduced. Hence, $D\in\mathcal{C}$. The Picard groups of $\widetilde X$ and $Z$ are finitely generated because $\widetilde{X}$ and $Z$ are normal. One sees that $\mathcal{C}$ is equivalent to the class of groups given by countably many generators and finitely many relations. Then it is not hard to show that $\mathcal{C}$ is closed under extensions. So $\pic(X)\in\mathcal{C}$. We conclude that $\widehat{T}(\pic(X))=0$.
\end{proof}
\begin{rmk}\label{hdesc}
    I do not know if Picard groups of seminormal $k$-schemes have bounded torsion. They certainly do not need to be finitely generated (for example, the nodal cubic has Picard group $k^\times$). In [\cite{CHW}, \href{https://arxiv.org/pdf/2407.19920#page=19}{Section 4}], it is shown that one can prove reconstruction for seminormal schemes by using $h$-descent and de Jong's alterations to reduce to the case of regular schemes which are \'etale reconstructible by Voevodsky's theorem. In particular, the toposic hom conjecture is true over AFG fields of characteristic $0$. 
\end{rmk}
\subsection{Positive Characteristic}
Fix a prime $p$ for the rest of this paper. Recall that an $\FF_p$-scheme is perfect if the absolute Frobenius endomorphism $\Phi$---given by $f\mapsto f^p$ on functions and the identity on the underlying topological space---is an isomorphism. Any $\FF_p$-scheme $X$ admits a perfection $X^\perf:=\lim_\Phi X$.\footnote{Note that $|X^\perf|\cong|X|$.} For an affine scheme $\spec(A)$, we have $\spec(A)^\perf=\spec(A^{p^{-\infty}})$ where $A^{p^{-\infty}}=\colim_\Phi A$ is the perfect closure of $A$. As mentioned before, the perfection of an $\FF_p$-scheme is its absolute weak normalization. Therefore, by the topological invariance of the \'etale site, we have $X_\et\simeq X^\perf_\et$.\footnote{For a point $\spec(E)$ in characteristic $p$, topological invariance is equivalent to $\operatorname{Aut}(E^{\sep}/E)\cong\operatorname{Aut}(E^\sep/E^\perf)$. In particular, universal injectivity requires residue field extensions to be purely inseparable [\cite{Stacks}, \href{https://stacks.math.columbia.edu/tag/0BR5}{Tag 0BR5}]. In characteristic $0$ the fact that all fields are perfect is equivalent to the fact that a point is seminormal.} The upshot is that reconstructing perfect schemes is the best hope in positive characteristic.

The positive characteristic case differs from the characteristic $0$ case in a number of ways: \begin{itemize}
    \item We work with perfections of schemes of finite type over an infinite, AFG field of positive characteristic. These schemes are almost never of finite type. But it turns out that this does not cause any major problems because the finite type assumption is mostly used to check equality of morphisms on geometric points and the perfection does not change the underlying topological space. We need the base field to be infinite for Proposition \ref{2.11}.
    \item We have to work with prime-to-$p$ parts of every projective limit. We take $\chi={\varprojlim}_{p\,\nmid\, n}\:\chi_n$ where \hbox{$\chi_n:\OO^*(\xbar)/\OO^*(\xbar)^n\overset{}{\longrightarrow} H^1_\et(\xbar,\mu_n)$} is the coboundary map.
    \item For a base field $K$ of characteristic $0$, $\pigmbar$ is isomorphic to $\widehat\ZZ$. But in positive characteristic, we don't get $\widehat{\ZZ}_\ptp$. The problem is the existence of Artin-Schreier covers. To isolate the Kummer covers, we need to pass to the \emph{tame} fundamental group of $\GG_m$. From there we can construct the positive characteristic analogue of $\chi_\top$.
    \item Decompletion (Proposition \ref{2.11}) does not necessarily go through if $X$ is imperfect (whereas it works for any finite type scheme over an AFG field of characteristic $0$). This is because taking the perfect closure inverts $p$.
\end{itemize}
The overall strategy of the proof is to show that $\operatorname{Im}\chi_\top\subset\operatorname{Im}\chi$ suffices if $X$ is the perfection of a scheme of finite type over an infinite, AFG field of positive characteristic. \section{Preparations}
Fix a perfect field $K$ of any characteristic. We let $\Gamma_E=\operatorname{Gal}(E^\sep/E)$ for any field $E$. We will work with sites as Voevodsky did. In this section and the next, we will produce, reproduce, and fix proofs of the results in [\cite{Voe}] while adapting them to the characteristic $p$ situation. \begin{rmk}
    We work over a perfect field until the very end where we have to work with a scheme that starts life over an infinite, AFG field of positive characteristic. The absolute finite generation allows us to conclude that the Picard group has bounded torsion. Similarly, Voevodsky's use of an AFG field containing $\QQ$ is basically only used to apply Mordell-Weil-N\'eron-Lang. For this reason, reconstruction in characteristic $0$ is not limited to AFG fields. For example, one can take \hbox{$k=k_0(x_1,x_2,\ldots)$} with $k_0\supset \QQ$ AFG and Mordell-Weil-N\'eron-Lang still holds. One also needs the fact that the natural map $$\overline{k}^\times\longrightarrow \underset{E\supset k}{\varinjlim}\underset{n}{\varprojlim}\:E^\times/E^{\times n}$$ is injective, that is, $k^\times$ has no infinitely divisible elements. The divisible elements of $k$ are the same as the divisible elements of $k_0$ because $k^\times\cong k_0[x_1,x_2,\ldots]^\times\oplus\bigoplus_I \ZZ$. Since $k_0$ is AFG, it has no divisible elements. Taken with Remark \ref{hdesc}, we conclude that \'etale reconstruction goes through over $k$.
\end{rmk}
\subsection{Generalities on \'Etale Sites}
A morphism of sites $f:X\longrightarrow Y$ is a functor $f^{-1}:Y\longrightarrow X$ that sends covers to covers and preserves fiber products. The $2$-morphisms are natural transformations $f^{-1}\Rightarrow g^{-1}$. Let $\textbf{Site}$ denote the 2-category of sites. We would like to work with schemes over a base. So we have to work with sites over a base. Let $\textbf{Site}_{/S}$ denote the $2$-category of sites over a base $S$. The objects are $S$-sites. A morphism $f:X\longrightarrow Y$ is a pair $(f^{-1},\alpha)$ where $f^{-1}:Y\longrightarrow X$ is a functor as before and $\alpha:p_X^{-1}\Rightarrow f^{-1}\circ p_Y^{-1}$ is a natural isomorphism where $p_{(-)}$ denotes the structure map from $(-)$ to $S$. And a $2$-morphism $(f^{-1},\alpha_1)\Rightarrow(g^{-1},\alpha_2)$ is a natural transformation $f^{-1}\Rightarrow g^{-1}$ such that $$\begin{tikzcd}
                                                                                & f^{-1}\circ p_Y^{-1} \arrow[dd, Rightarrow] \\
p_X^{-1} \arrow[ru, "\alpha_1", Rightarrow] \arrow[rd, "\alpha_2"', Rightarrow] &                                             \\
                                                                                & g^{-1}\circ p_Y^{-1}                       
\end{tikzcd}$$ commutes.

Let $X$ be a scheme. Its quasicompact and separated \'etale site $X_\et$ has as open sets \'etale, quasicompact, and separated morphisms $U\longrightarrow X$. The coverings are the jointly surjective families. A morphism of schemes $\varphi:X\longrightarrow Y$ induces a morphism of \'etale sites $\varphi_{\et}:X_\et\longrightarrow Y_\et$ given by pullback along $\varphi$. So we get a functor $(-)_\et:\textbf{Sch}\longrightarrow \textbf{Site}$. In fact, we get a functor $(-)_\et:\textbf{Sch}_S\longrightarrow \textbf{Site}_{/S_\et}$. Indeed, take $f\in\mor_{S}(X,Y)$ and consider its \'etalification $f_\et$. To construct $\alpha_f$ to get a morphism of $S_\et$-sites, take an \'etale open $U\in S_\et$. We see that $p_X^{-1}(U)$ is the \'etale open $U\times_SX\in X_\et$. And $(f^{-1}_\et\circ p_Y^{-1})(U)$ is the \'etale open $(U\times_SY)\times_fX \in X_\et$. Then $\alpha_f(U)$ is the isomorphism $U\times_SX\overset{}{\longrightarrow}(U\times_SY)\times_fX$ given by $(\operatorname{pr}_U\times(f\circ\operatorname{pr}_X))\times \operatorname{pr}_X$ where $\operatorname{pr}_{(-)}$ denotes the projection to $(-)$.
\begin{rmk}\label{basedterminal}
    Voevodsky only assumes that morphisms of sites preserve fiber products. This does not allow us to deduce that $f^{-1}(Y)=X$ for $f=(f^{-1},\alpha):X_\et\longrightarrow Y_\et$ a morphism of \'etale sites. But when working with $\spec(K)_\et-$morphisms, we \emph{do} get preservation of terminal objects. Indeed, for $K$-schemes $X$ and $Y$, we have $$X\overset{\operatorname{id}}{\longrightarrow}X=p_X^{-1}(\spec(K)
    \overset{\operatorname{id}}{\longrightarrow}\spec(K))\cong (f^{-1}\circ p_Y^{-1})(\spec(K)
    \overset{\operatorname{id}}{\longrightarrow}\spec(K))=f^{-1}(Y\overset{\operatorname{id}}{\longrightarrow}Y)$$
\end{rmk}

Let $X$ and $Y$ be arbitrary $K$-schemes. We let $\mor_{K_\et}(X_\et,Y_\et)$ denote the set of $\spec(K)_\et$-morphisms of \'etale sites up to (2-)isomorphism.
\begin{prop}[\cite{Voe}, \href{https://www.math.ias.edu/vladimir/sites/math.ias.edu.vladimir/files/Etale_topologies_published.pdf\#page=4}{Proposition 1.2}]\label{base}
    Let $X$ and $Y$ be $K$-schemes with $Y$ of finite type. Then for any separable extension $E\supset K$ \emph{(}i.e., for any \'etale open $\spec(E)\in K_\et$\emph{)}, we get a functor 
    $$(-)_E:\mor_{K_\et}(X_\et,Y_\et)\longrightarrow\mor_{E_\et}(X_{E,\et},Y_{E,\et})$$
    that is natural relative to morphisms of $X_\et$ and $Y_\et$. And the diagram
    $$\begin{tikzcd}
{X_{E,\et}} \arrow[r, "\varphi_E"] \arrow[d, "(\operatorname{pr}_X)_\et"'] & {Y_{E,\et}} \arrow[d, "(\operatorname{pr}_Y)_\et"] \\
X_\et \arrow[r, "\varphi"]                                 & Y_\et                             
\end{tikzcd}$$
commutes.
\end{prop}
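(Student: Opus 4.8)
The plan is to reduce the construction to a statement about slice sites. Since $\spec(E)\in K_\et$, the structure map $\spec(E)\to\spec(K)$ is finite \'etale, so for any $K$-scheme $Z$ the base change $Z_E:=p_Z^{-1}(\spec(E))=Z\times_K E$ is qcqs \'etale over $Z$ and hence is an object of $Z_\et$. First I would record the standard equivalence of sites $Z_{E,\et}\simeq (Z_\et)_{/Z_E}$: an object of the slice is a qcqs \'etale $U\to Z$ equipped with a $Z$-map $U\to Z_E$, and because a morphism between schemes \'etale over $Z$ is automatically \'etale, such a $U\to Z_E$ is itself a qcqs \'etale open of $Z_E$; the covers match on both sides since joint surjectivity is detected on the underlying schemes. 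This turns the problem into constructing a morphism of sites $(Y_\et)_{/Y_E}\longrightarrow(X_\et)_{/X_E}$.

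Next I would define $\varphi_E$ from $\varphi=(\varphi^{-1},\alpha)$ using the structural isomorphism at the open $\spec(E)$. By definition $\alpha(\spec E)\colon X_E=p_X^{-1}(\spec E)\xrightarrow{\ \sim\ }\varphi^{-1}(p_Y^{-1}(\spec E))=\varphi^{-1}(Y_E)$. I set
$$\varphi_E^{-1}\bigl(V\to Y_E\bigr)\;:=\;\Bigl(\varphi^{-1}(V)\longrightarrow\varphi^{-1}(Y_E)\xrightarrow{\ \alpha(\spec E)^{-1}\ }X_E\Bigr),$$
where the first arrow is $\varphi^{-1}$ applied to the structure map $V\to Y_E$; the composite is a map of \'etale $X$-schemes, hence an \'etale open of $X_E$. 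That $\varphi_E^{-1}$ preserves covers is inherited from $\varphi^{-1}$ via the cover-matching above, and that it preserves fiber products follows because fiber products in a slice are computed in the ambient site $Y_\et$ and $\varphi^{-1}$ preserves those.

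To promote $\varphi_E^{-1}$ to a $\spec(E)_\et$-morphism I would build its structural isomorphism $\alpha_E$ from $\alpha$: for $\spec(F)\in E_\et$ one has $\spec(F)\in K_\et$ as well, $p_{X_E}^{-1}(\spec F)=X_F$ over $X_E$ and $\varphi_E^{-1}(p_{Y_E}^{-1}(\spec F))=\varphi^{-1}(Y_F)$ over $X_E$, and the component of $\alpha_E$ at $\spec(F)$ is the one induced by $\alpha(\spec F)$. Well-definedness on $2$-isomorphism classes then follows because a natural isomorphism $\varphi^{-1}\Rightarrow\varphi'^{-1}$ compatible with $\alpha,\alpha'$ transports through $\varphi^{-1}$ to a natural isomorphism $\varphi_E^{-1}\Rightarrow\varphi_E'^{-1}$ compatible with $\alpha_E,\alpha_E'$, so $(-)_E$ descends to the sets of $2$-isomorphism classes; naturality in $X_\et$ and $Y_\et$ is the observation that the same slice recipe commutes with precomposition and postcomposition by morphisms of sites. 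Finally, the square commutes: unwinding the slice descriptions, $(\mathrm{pr}_Y)_\et^{-1}$ sends $W\in Y_\et$ to $W\times_Y Y_E$ and $(\mathrm{pr}_X)_\et^{-1}$ sends $\varphi^{-1}(W)$ to $\varphi^{-1}(W)\times_X X_E$, while $\varphi_E^{-1}(W\times_Y Y_E)=\varphi^{-1}(W\times_Y Y_E)$; preservation of fiber products together with $\varphi^{-1}(Y)=X$ (Remark \ref{basedterminal}) and $\alpha(\spec E)$ identifies $\varphi^{-1}(W\times_Y Y_E)\cong\varphi^{-1}(W)\times_X X_E$ over $X_E$.

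I expect the main obstacle to be the coherence bookkeeping in the third step: verifying that $\alpha_E$ really is a natural isomorphism and that $(\varphi_E^{-1},\alpha_E)$ satisfies the compatibility diagram defining a morphism of $\spec(E)_\et$-sites. Concretely this is a diagram chase showing that $\alpha_E$ is compatible with the composition of \'etale opens $\spec(F)\to\spec(E)\to\spec(K)$, which forces one to track how $\alpha$ behaves under this composition. The finite-type hypothesis on $Y$ plays only a supporting role here (keeping $Y_E$ and its \'etale opens well-behaved); the essential input is the slice-site equivalence together with the existence and naturality of $\alpha$.
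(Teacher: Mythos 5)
Your proposal is correct and follows essentially the same route as the paper: both define $\varphi_E^{-1}$ by restricting $\varphi^{-1}$ to \'etale opens lying over $Y_E$ and transporting back to $X_E$ along the structural isomorphism $\alpha(\spec E)$, and both assemble $\alpha_E$ from the components $\alpha(\spec F)$ for $\spec F\in E_\et$. Your explicit use of the slice-site identification $Z_{E,\et}\simeq (Z_\et)_{/Z_E}$, and your checks of cover/fiber-product preservation, descent to $2$-isomorphism classes, and commutativity of the square, merely make precise what the paper leaves implicit.
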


\begin{proof}
Given $\varphi=(\varphi^{-1},\alpha)$ we define $\varphi_E=(\varphi^{-1}_E,\alpha_E)$ as follows: Let $E\in K_\et$ be the \'etale open coming from $\spec{E}\longrightarrow\spec{K}$. Then $X_{E,\et}=p^{-1}_X(E)_\et$ and $Y_{E,\et}=p^{-1}_Y(E)_\et$. We take $\varphi_E^{-1}=\alpha(E)^{-1}_\et\circ\varphi^{-1}|_{Y_E}$ where $\alpha(E)_\et^{-1}$ is the functor from $\varphi^{-1}(Y_E)_\et\longrightarrow X_{E,\et}$ given by the \'etalification of $\alpha(E):X_E\overset{\sim}{\longrightarrow}\varphi^{-1}(Y_E)$. For every \'etale open $V\in E_\et$, we need to give an isomorphism $\alpha_E(V):p^{-1}_{X_E}(V)\rightarrow (\varphi^{-1}_E\circ p^{-1}_{Y_E})(V)$. Assume that $V=\spec(L)$. Then we have \begin{align*}
    (\varphi^{-1}_E\circ p_{Y_E}^{-1})(V)&=(\alpha(E)_\et^{-1}\circ\varphi^{-1}|_{Y_E}\circ p_{Y_E}^{-1})(\spec(L))\\
    &=\alpha(E)_\et^{-1}(\varphi^{-1}|_{Y_E}(Y_L))\\
    &=X_E\times_{\varphi^{-1}|_{Y_E}(Y_E)}\varphi^{-1}|_{Y_E}(Y_L)
\end{align*}
Since we have isomorphisms $\alpha(L):X_L\overset{\sim}{\longrightarrow}\varphi^{-1}|_{Y_E}(Y_L)$ and $\alpha(E):X_E\overset{\sim}{\longrightarrow}\varphi^{-1}|_{Y_E}(Y_E)$, we get
$$\begin{tikzcd}
p_{X_E}^{-1}(V)=X_L \arrow[rrd, "\sim", "\alpha(L)"', bend left] \arrow[rdd, bend right] \arrow[rd, "\sim", "\alpha_E(V)"', dashed] &                                                                                  &                                    \\
                                                                                            & X_E\times_{\varphi^{-1}|_{Y_E}(Y_E)}\varphi^{-1}|_{Y_E}(Y_L) \arrow[r, "\sim"] \arrow[d] & \varphi^{-1}|_{Y_E}(Y_L) \arrow[d] \\
                                                                                            & X_E \arrow[r, "\sim", "\alpha(E)"']                                                       & \varphi^{-1}|_{Y_E}(Y_E)          
\end{tikzcd}$$
We have found our isomorphism $\alpha_E(V)$. It is straightforward to extend to the case where $\pi_0(V)>1$.
\end{proof}
Morphisms of \'etale sites give rise to morphisms on underlying topological spaces. The idea is that, for two toplogical spaces $X$ and $Y$ with $Y$ sober, a morphism $\varphi:X\longrightarrow Y$ is uniquely determined by what it does on preimages of opens of $Y$: in particular, $$\varphi(x)=\text{Generic point of }(Y\minus \bigcup_{x\not\in\varphi^{-1}(U)}U)$$
\begin{lem}[\cite{Voe}, \href{https://www.math.ias.edu/vladimir/sites/math.ias.edu.vladimir/files/Etale_topologies_published.pdf\#page=3}{Proposition 1.1}]
    Let $\varphi:X_\et\longrightarrow Y_\et$ be a morphism of \'etale sites. There exists a unique morphism $|\varphi|:|X|\longrightarrow|Y|$ such that $|\varphi|^{-1}(U)=|\operatorname{Im}\varphi^{-1}(U)|$ for any open set $U\subset |Y|$.
\end{lem}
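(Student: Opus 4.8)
The plan is to show that the assignment $U\mapsto|\operatorname{Im}\varphi^{-1}(U)|$ defines a homomorphism of frames from the open sets of $|Y|$ to the open sets of $|X|$, and then to invoke the sobriety of schemes to realize this frame homomorphism as the inverse-image map of a unique continuous $|\varphi|$. First one checks the assignment is well defined and lands in opens: the quasicompact opens $U\hookrightarrow Y$ are objects of $Y_\et$ and (as $Y$ is quasi-separated) form a basis of $|Y|$ closed under finite intersection, so it suffices to work on this basis. For such $U$, the object $\varphi^{-1}(U)\longrightarrow X$ of $X_\et$ is \'etale, hence an open map, so $|\operatorname{Im}\varphi^{-1}(U)|$ is genuinely open in $|X|$; write $\varphi^*(U)$ for it.

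The heart of the argument is verifying that $\varphi^*$ preserves finite meets and arbitrary joins. For joins, a union $\bigcup_i U_i$ is covered by the $U_i$ in $Y_\et$, and since $\varphi^{-1}$ carries covers (jointly surjective families) to covers, the family $\{\varphi^{-1}(U_i)\longrightarrow\varphi^{-1}(\bigcup_i U_i)\}$ is jointly surjective; taking images in $X$ gives $\varphi^*(\bigcup_i U_i)=\bigcup_i\varphi^*(U_i)$, and the empty union is handled by the empty cover of $\emptyset$, which forces $\varphi^{-1}(\emptyset)=\emptyset$. For binary meets, since $U_1\cap U_2=U_1\times_Y U_2$ and $\varphi^{-1}$ preserves fiber products, one has $\varphi^{-1}(U_1\cap U_2)=\varphi^{-1}(U_1)\times_X\varphi^{-1}(U_2)$; and for any two morphisms $W_1,W_2\longrightarrow X$ a point $x$ lies in $\operatorname{Im}(W_1\times_X W_2)$ iff the fiber $(W_1)_x\times_{\kappa(x)}(W_2)_x$ is nonempty iff both $(W_1)_x$ and $(W_2)_x$ are nonempty (a tensor product of nonzero $\kappa(x)$-algebras is nonzero), i.e. iff $x\in\operatorname{Im}W_1\cap\operatorname{Im}W_2$. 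The top element requires $\varphi^*(Y)=|X|$, i.e. that $\varphi^{-1}$ send the terminal object to the terminal object; this is the one place the bare morphism-of-sites axioms do not suffice, but for the $\spec(K)_\et$-morphisms we care about it holds by Remark \ref{basedterminal}.

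Finally, with a frame homomorphism $\varphi^*$ in hand and $|Y|$ sober (true for every scheme), one concludes existence and uniqueness of $|\varphi|$. Each $x\in|X|$ determines a completely prime filter $\{U:x\in\varphi^*(U)\}$ on the opens of $|Y|$ — closure under joins is exactly join-preservation, and containment of the top is top-preservation — and by sobriety this filter is the neighborhood filter of a unique point, which we declare to be $|\varphi|(x)$; concretely this is the generic point of $|Y|\minus\bigcup_{x\notin\varphi^*(U)}U$, as in the displayed formula. Continuity is immediate since $|\varphi|^{-1}(U)=\varphi^*(U)$ is open, and uniqueness holds because any two continuous maps into a sober (hence $T_0$) space with the same preimages of all opens agree pointwise.

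I expect the main obstacle to be the frame-homomorphism verification itself — in particular the meet axiom, handled via the fiber-over-a-field nonemptiness criterion, and the top axiom, which is genuinely delicate for non-based morphisms of sites and is precisely where Remark \ref{basedterminal} must be invoked to keep $|\varphi|$ a total map.
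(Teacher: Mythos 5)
Your proof is correct and is essentially the paper's argument in locale-theoretic packaging: the completely prime filter $\{U : x\in\varphi^*(U)\}$ is exactly the complement of the paper's $V_x=\bigcup_{x\not\in|\operatorname{Im}\varphi^{-1}(U)|}U$, and the join- and meet-preservation you verify (covers to covers, fiber products to fiber products) is precisely what the paper uses to show $|Y|\minus V_x$ is irreducible before taking its generic point. You are somewhat more careful than the paper on two points it elides --- restricting to the basis of quasicompact opens (general opens of $|Y|$ need not be objects of the quasicompact separated site) and the terminal-object condition $\varphi^{-1}(Y)=X$, which the bare morphism-of-sites axioms do not guarantee and which the paper only addresses in Remark \ref{basedterminal}.
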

\begin{proof}
    Take $x\in |X|$. There is a maximal open subset $V_x\subset |Y|$ such that $|\operatorname{Im}\varphi^{-1}(V_x)|\subset |X|\minus \overline{\{x\}}$. Since $|\operatorname{Im}\varphi^{-1}(V)|$ is open, this is equivalent to saying that $V_x$ is the maximal open such that $x\not\in|\operatorname{Im}\varphi^{-1}(V_x)|$. It is given by $$V_x=\bigcup_{x\not\in|\operatorname{Im}\varphi^{-1}(U)|}U$$ It turns out that $|Y|\minus V_x$ is irreducible. To see this, let $V_x=U_1\cap U_2$ and assume that neither $U_1$ nor $U_2$ is a subset of $V_x$. Then we have $x\in |\operatorname{Im}\varphi^{-1}(U_i)|$ for $i=1,2$.  
But $U_1\cap U_2=V_x$, so, using the fact that $\varphi^{-1}$ preserves fiber products and covers, we have $$x\in|\operatorname{Im}\varphi^{-1}(U_1)|\cap|\operatorname{Im}\varphi^{-1}(U_2)|=|\operatorname{Im}\varphi^{-1}(V_x)|\subset X\minus\overline{\{x\}}$$ This is a contradiction. With this we define $$|\varphi|(x)=\text{Generic point of }\left(|Y|\minus V_x\right)$$
which we may do because $|Y|$ is sober and $|Y|\minus V_x$ is irreducible. It is then clear that $|\varphi|^{-1}(V)=|\operatorname{Im}\varphi^{-1}(V)|$ for any open subset $V\subset|Y|$.
\end{proof}
\begin{rmk}
    If $\varphi:X\longrightarrow Y$ is a morphism of schemes and $|\varphi|:|X|\longrightarrow |Y|$ is the associated map on topological spaces, then $|\varphi_\et|=|\varphi|$.   
\end{rmk}
\begin{defn}\label{admissible}
    A morphism $\varphi\in\mor(X_\et,Y_\et)$ is \emph{admissible} if $|\varphi|$ sends closed points to closed points.\footnote{In [\cite{CHW}] admissible morphisms are called \emph{pinned} morphisms. We stick with Voevodsky's terminology.}
\end{defn}
\begin{rmk}
    Admissibility is a reasonable condition because any morphism $X\longrightarrow Y$ of $K$-schemes with $X$ of finite type sends closed points to closed points. 
\end{rmk}
Given this, we will let $\mor_{K_\et}^\bullet(X_\et,Y_\et)$ denote the subset of $\mor_{K_\et}(X_\et,Y_\et)$ comprising admissible $\spec(K)_\et$-morphisms of \'etale sites up to (2-)isomorphism (similarly for $\mor^\bullet(X_\et,Y_\et)\subset \mor(X_\et,Y_\et)$).

Given a morphism $\varphi:X_\et\longrightarrow Y_\et$ and an \'etale open $U\longrightarrow Y$, we let $\varphi_U$ denote the restriction of $\varphi$ to $\varphi^{-1}(U)_\et$ i.e., $\varphi_U$ is the induced map $\varphi^{-1}(U)_\et\longrightarrow U_\et$. 
\begin{lem}[\cite{Voe}, \href{https://www.math.ias.edu/vladimir/sites/math.ias.edu.vladimir/files/Etale_topologies_published.pdf\#page=5}{Proposition 1.3}]\label{admis}
    Let $X$ and $Y$ be $\kbar$-schemes with $Y$ of finite type and pick some \hbox{$\varphi\in\mor^\bullet_{\kbar_\et}(X_\et,Y_\et)$}. Then, for any \'etale open $U\longrightarrow Y$ and any closed point $x\in X$, the map $|\varphi_U|$ is a bijection from $|\varphi^{-1}(U)_x|$ to $|U_{|\varphi|(x)}|$ where $\varphi^{-1}(U)_x$ is the fiber of $\varphi^{-1}(U)$ over $x$, and $U_{|\varphi|(x)}$ is the fiber of $U$ over $|\varphi|(x)$.
\end{lem}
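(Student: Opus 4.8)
The plan is to first reduce to a bijection of finite sets, then treat surjectivity and injectivity by separate devices, the second being the genuinely delicate one.

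Set $y := |\varphi|(x)$, which is a closed point of $Y$ by admissibility, so that $\kappa(y) = \kbar$ and the geometric fiber $U_y$ is a finite set of closed points. Restricting $\varphi$ along the \'etale open $U \to Y$ gives a commutative square of topological spaces whose vertical arrows are the projections $\pi_X : |\varphi^{-1}(U)| \to |X|$ and $\pi_Y : |U| \to |Y|$; reading it off shows that $|\varphi_U|$ carries the fiber $\varphi^{-1}(U)_x = \pi_X^{-1}(x)$ into $U_y = \pi_Y^{-1}(y)$. Both fibers are finite (\'etale fibers over a point), so it remains to show that the induced map $\psi := |\varphi_U| : \varphi^{-1}(U)_x \to U_y$ is bijective.

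For surjectivity, fix $q \in U_y$ and let $W := U \minus (U_y \minus \{q\})$, an open of $U$ meeting $U_y$ only in $q$; as $U$ is Noetherian, $W$ is quasicompact, separated, and \'etale over $Y$. Since $W \to \pi_Y(W)$ is a cover in $Y_\et$ and $\varphi^{-1}$ preserves covers, $\varphi^{-1}(W) \to \varphi^{-1}(\pi_Y(W))$ is surjective, so these two objects have the same image in $|X|$; by the defining property of $|\varphi|$ that image is $|\varphi|^{-1}(\pi_Y(W))$, which contains $x$ because $y \in \pi_Y(W)$. Identifying $\varphi^{-1}(W)$ with $\varphi_U^{-1}(W)$ via the restriction square and using $|\varphi_U|^{-1}(W) = |\operatorname{Im}\varphi_U^{-1}(W)|$, I get a point $p$ over $x$ in $|\varphi_U|^{-1}(W)$, whence $\psi(p) \in W \cap U_y = \{q\}$.

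The hard part is injectivity, where a purely topological argument is hopeless because \'etale maps are not injective: two sheets of $U$ over $Y$ may carry distinct points of $\varphi^{-1}(U)_x$ to the same $q$. The device is the clopen diagonal. Since $U \to Y$ is \'etale and separated, $\Delta_U \hookrightarrow U \times_Y U$ is open and closed, giving a decomposition $U \times_Y U = \Delta_U \sqcup R$ into the diagonal and the ``off-diagonal'' $R$. Because $\varphi^{-1}$ preserves fiber products and, by Remark \ref{basedterminal}, the terminal object, we have $\varphi^{-1}(U \times_Y U) = \varphi^{-1}(U) \times_X \varphi^{-1}(U)$; and as $\varphi^{-1}$ preserves covers and the empty object, it respects clopen decompositions, so $\varphi^{-1}(R)$ is exactly the off-diagonal $R'$ of $\varphi^{-1}(U)$. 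Now if $p_1 \neq p_2$ in $\varphi^{-1}(U)_x$ both mapped to $q$, I would choose a point $P \in R'$ over the pair $(p_1,p_2)$; applying $|\varphi_R|$, which must land in $R$, and computing its two projections as $\psi(p_1) = \psi(p_2) = q$, the image would be the diagonal point $(q,q) \in \Delta_U$, contradicting $\Delta_U \cap R = \varnothing$. The step I expect to demand the most care is verifying that $\varphi^{-1}$ transports the clopen diagonal decomposition correctly and that $|\cdot|$ is compatible with the two projections $U \times_Y U \to U$, since everything hinges on these functorialities rather than on the underlying point-set maps.
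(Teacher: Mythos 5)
Your proof is correct and follows essentially the same route as the paper: the same shrunken open $W = U\minus(U_y\minus\{q\})$ covering a Zariski neighborhood of $y$ for surjectivity, and for injectivity the clopen diagonal of the separated \'etale map $U\to Y$ transported through $\varphi^{-1}$ using preservation of fiber products, diagonals, and covers (the paper shrinks to $U^z$ first rather than splitting off the off-diagonal $R$, but the content is identical). The one step you leave implicit --- that a point of $U\times_Y U$ lying over $y$ with both projections equal to $q$ must be the diagonal point $(q,q)$ --- is precisely where $\kappa(y)=\kbar$ being algebraically closed enters (so that $(U\times_Y U)_y\cong U_y\times U_y$ as sets), and is the spot where the paper explicitly invokes algebraic closedness; without it this inference, and the lemma itself, would fail.
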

\begin{proof}
To show that $|\varphi_U|$ actually maps $|\varphi^{-1}(U)_x|$ to $|U_{|\varphi|(x)}|$, it suffices to show that the following diagram commutes
$$\begin{tikzcd}
{|\varphi^{-1}(U)|} \arrow[r, "|\varphi_U|"] \arrow[d] & {|U|} \arrow[d] \\
{|X|} \arrow[r, "|\varphi|"]                           & {|Y|}          
\end{tikzcd}$$
This just follows from the functoriality of the association $X_\et\leadsto |X|, \varphi\leadsto |\varphi|$ applied to the commutative diagram $$\begin{tikzcd}
\varphi^{-1}(U)_\et \arrow[r, "\varphi_U"] \arrow[d] & U_\et \arrow[d] \\
X_\et \arrow[r, "\varphi"]                           & Y_\et          
\end{tikzcd}$$
Surjectivity: Let $y=|\varphi|(x)$ for $x\in |X|$ a closed point. Let $U\longrightarrow Y$ be \'etale. Pick a point $z\in |U_y|$ and assume that it doesn't lie in the image of $|\varphi_U|$. Let $U^z=U\minus(U_y\minus z)$. Then $|\varphi_U^{-1}(U^z)|_x=|\varphi^{-1}(U^z)|_x=\varnothing$. But $U^z$ is a covering in the Zariski neighborhood $V=\operatorname{Im}(U^z\longrightarrow Y)$ of $y$ (the image of $U^z$ in $Y$ contains $y$ and $U^z$ is open in $U$ which has open image in $Y$ by \'etaleness). So $\varphi^{-1}(U^z)$ must be a covering in a Zariski neighborhood $\varphi^{-1}(V)$ of $|\varphi|^{-1}(y)\ni x$ because $\varphi^{-1}$ sends covers to covers. This is shown in the following diagram:
$$\begin{tikzcd}
                             & {|\varphi^{-1}(U^z)|} \arrow[ld, two heads] \arrow[d]  & {|U^z|} \arrow[rd, two heads] \arrow[d] &              \\
{x\in|\operatorname{Im}(\varphi^{-1}(V)\longrightarrow X)|} \arrow[rd] & {|\varphi^{-1}(U)|} \arrow[r, "|\varphi_U|"] \arrow[d] & {|U|} \arrow[d]                         & {|V|\ni y} \arrow[ld] \\
                             & {|X|} \arrow[r, "|\varphi|"]                           & {|Y|}                                   &             
\end{tikzcd}$$
So the fiber $|\varphi^{-1}(U^z)|_x$ is non-empty. Note that this part of the proof works if $K$ is any field and $x$ is any point.

    Injectivity: Pick $z\in U_y$ and define $U^z$ as before. We have constructed $U^z$ so that the fiber over $y$ of $U^z\longrightarrow Y$ is $z$.  We want to show that there is only one thing in $|\varphi^{-1}(U)|$ that maps to both $x$ and $z$. Consider the diagram
    $$\begin{tikzcd}
                                            & {|\varphi^{-1}(U^z)|} \arrow[ld, two heads] \arrow[d] \arrow[r] & {|U^z|}\ni z \arrow[d] \\
w\in{|\varphi_U|}^{-1}(|U^z|) \arrow[r, hook] & {|\varphi^{-1}(U)|} \arrow[r] \arrow[d]                         & {|U|}\ni z \arrow[d]   \\
                                            & x\in{|X|} \arrow[r]                                             & {|Y|}\ni y            
\end{tikzcd}$$
It suffices to show that there is a unique point $w$ in the fiber $\varphi^{-1}(U^z)_x$. Consider the projection map $\operatorname{pr}_1:U^z\times_Y U^z\longrightarrow U^z$. Since $\kbar$ is algebraically closed and $z$ is closed, $\operatorname{pr}_1^{-1}(z)= (z,z)$. So the diagonal $U^z\longrightarrow U^z\times_Y U^z$ is a Zariski open neighborhood of $\operatorname{pr}_1^{-1}(z)$ ($U^z$ is unramified over $Y$, so the diagonal is open). As in the proof of surjectivity, the diagonal $\varphi^{-1}(U)\longrightarrow\varphi^{-1}(U)\times_X \varphi^{-1}(U)$ is then a Zariski open neighborhood of the fiber over $w$ of $\operatorname{pr_1}:\varphi^{-1}(U)\times_X\varphi^{-1}(U)\longrightarrow \varphi^{-1}(U)$ because $\varphi^{-1}$ is functorial ($\varphi^{-1}$ sends diagonals to diagonals), preserves fiber products ($\varphi^{-1}(U\times_YU)=\varphi^{-1}(U)\times_{\varphi^{-1}(Y)}\varphi^{-1}(U)$ which is equal to $\varphi^{-1}(U)\times_{X}\varphi^{-1}(U)$ by Remark \ref{basedterminal}), and sends covers to covers. In particular, any pair $(w_1,w_2)\in\operatorname{pr}_1^{-1}(w)\subset \varphi^{-1}(U)\times_{X}\varphi^{-1}(U)$ lies in the diagonal so that there is a unique point in $|\varphi^{-1}(U^z)|_x$. This is summed up in the following diagram
\[\begin{tikzcd}
	& {|\varphi^{-1}(U^z)|} & {|U^z|} \\
	& {|\Delta(\varphi^{-1}(U^z))|} & {|\Delta(U^z)|} & {\ni(z,z)} \\
	{(w_1,w_2)\in} & {|\varphi^{-1}(U^z)\times_X \varphi^{-1}(U^z)|} & {|U^z\times_YU^z|} & {\ni (z,z)} \\
	{w\in} & {|\varphi^{-1}(U^z)|} & {|U^z|} & {\ni z} & {} \\
	{w\in} & {|\varphi^{-1}(U)|} & {|U|} & {\ni z} \\
	{x\in} & {|X|} & {|Y|} & {\ni y}
	\arrow[from=1-2, to=1-3]
	\arrow["\Delta"', two heads, from=1-2, to=2-2]
	\arrow["\Delta", two heads, from=1-3, to=2-3]
	\arrow[from=2-2, to=2-3]
	\arrow[hook, from=2-2, to=3-2]
	\arrow[hook, from=2-3, to=3-3]
	\arrow[maps to, from=2-4, to=3-4]
	\arrow[maps to, from=3-1, to=4-1]
	\arrow[from=3-2, to=3-3]
	\arrow["{\text{pr}_1}"', from=3-2, to=4-2]
	\arrow["{\operatorname{pr}_1}", from=3-3, to=4-3]
	\arrow[maps to, from=3-4, to=4-4]
	\arrow[maps to, from=4-1, to=5-1]
	\arrow[from=4-2, to=4-3]
	\arrow[from=4-2, to=5-2]
	\arrow[from=4-3, to=5-3]
	\arrow[maps to, from=4-4, to=5-4]
	\arrow[maps to, from=5-1, to=6-1]
	\arrow[from=5-2, to=5-3]
	\arrow[from=5-2, to=6-2]
	\arrow[from=5-3, to=6-3]
	\arrow[maps to, from=5-4, to=6-4]
	\arrow[shift right=3, curve={height=24pt}, maps to, from=6-1, to=6-4]
	\arrow[from=6-2, to=6-3]
\end{tikzcd}\] 
\end{proof}
\subsection{\'Etalified Rational Points}
Fix a perfect field $K$ of positive characteristic.
\begin{lem}[Rigidity of Geometric Points of \'Etale Sites, \cite{Voe}, \href{https://www.math.ias.edu/vladimir/sites/math.ias.edu.vladimir/files/Etale_topologies_published.pdf\#page=6}{Proposition 2.1}]\label{rigid} Let $X$ be a $\kbar$-scheme of finite type and pick two morphisms $(\varphi_1^{-1},\alpha_1),(\varphi_2^{-1},\alpha_2)\in\mor_{\kbar_\et}(\spec(\kbar)_\et,X_\et)$. If $\eta:\varphi_1^{-1}\overset{\sim}{\Rightarrow}\varphi_2^{-1}$ is a natural isomorphism, we get a $2$-isomorphism $(\varphi_1^{-1},\alpha_1)\overset{\sim}{\Rightarrow}(\varphi_2^{-1},\alpha_2)$. 
\end{lem}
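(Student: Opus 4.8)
The plan is to show that the given $\eta$ \emph{itself} serves as the required $2$-isomorphism, so that no modification is necessary; this is the precise sense in which geometric points are rigid. Writing $p_{X_\et}^{-1}\colon\spec(\kbar)_\et\longrightarrow X_\et$, $W\longmapsto W\times_{\spec(\kbar)}X$ for the structure functor of the target, unwinding the definition of a $2$-morphism of $\spec(\kbar)_\et$-sites shows that I must verify, for every object $W\in\spec(\kbar)_\et$, the identity
\[
\alpha_2(W)=\eta\big(W\times_{\spec(\kbar)}X\big)\circ\alpha_1(W)
\]
as morphisms $W\longrightarrow\varphi_2^{-1}(W\times_{\spec(\kbar)}X)$ in $\spec(\kbar)_\et$ (here the structure functor of the source $\spec(\kbar)_\et$ is the identity, so both $\alpha_1$ and $\alpha_2$ emanate from $\operatorname{id}$).

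First I would reduce to the terminal object. Since $\kbar$ is algebraically (hence separably) closed, every object $W\in\spec(\kbar)_\et$ is a finite disjoint union of copies of the terminal object $\spec(\kbar)$, and a morphism out of such a coproduct in $\spec(\kbar)_\et$ is determined by its precomposition with the component inclusions $\iota_j\colon\spec(\kbar)\hookrightarrow W$. Precomposing the desired identity with each $\iota_j$ and using the naturality of $\alpha_1$ and $\alpha_2$ with respect to $\iota_j$, together with the naturality of $\eta$ with respect to the image $p_{X_\et}^{-1}(\iota_j)\colon X\longrightarrow W\times_{\spec(\kbar)}X$, transports the whole statement to the single equation at $W=\spec(\kbar)$.

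Then I would pass to the terminal object itself, where $p_{X_\et}^{-1}(\spec(\kbar))=X$ and $\alpha_i(\spec(\kbar))\colon\spec(\kbar)\overset{\sim}{\longrightarrow}\varphi_i^{-1}(X)$ is an isomorphism (this is exactly the preservation of terminal objects recorded in Remark \ref{basedterminal}). Consequently $\varphi_i^{-1}(X)$ is a single copy of $\spec(\kbar)$, and the composite $\alpha_2(\spec(\kbar))^{-1}\circ\eta(X)\circ\alpha_1(\spec(\kbar))$ is an \emph{automorphism of the terminal object} $\spec(\kbar)$ in $\spec(\kbar)_\et$. The crux — and the origin of the name ``rigidity'' — is that this automorphism group is trivial: a $\kbar$-algebra endomorphism of $\kbar$ must fix $\kbar$ and is therefore the identity. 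Hence the composite is $\operatorname{id}_{\spec(\kbar)}$, which is exactly the required compatibility at $\spec(\kbar)$, and the reduction above then upgrades it to all $W$.

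The main obstacle is not any deep input but the correct bookkeeping: setting up the naturality squares for $\iota_j$ and for its pullback $X\to W\times_{\spec(\kbar)}X$ so that the reduction to the terminal object is valid, and recognizing that the compatibility condition, after transport through the isomorphisms $\alpha_1,\alpha_2$, is literally an automorphism of $\spec(\kbar)$ over itself. Once that identification is in hand, rigidity closes the argument immediately; the finite-type hypothesis on $X$ is not essential to this particular lemma and merely persists as a standing assumption.
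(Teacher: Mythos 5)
Your proof is correct, but it follows a genuinely different route from the paper's. Both arguments ultimately show that the composite $c=\alpha_2^{-1}\circ(\eta\ast p_{X_\et}^{-1})\circ\alpha_1$, a natural automorphism of the identity functor on $\spec(\kbar)_\et$, is trivial. The paper attacks a general object $U$ head-on: $c(U)$ commutes with $\operatorname{Aut}(U/\spec(\kbar))=S_{|\pi_0(U)|}$ by naturality, hence is central, hence trivial when $|\pi_0(U)|\geq 3$; the cases $|\pi_0(U)|<3$ are then handled by embedding $U$ into a larger \'etale open and using naturality again. You instead go the other direction: use the coproduct decomposition of $W$ and naturality of $\alpha_1,\alpha_2,\eta$ along the component inclusions $\iota_j$ (and their images $p_{X_\et}^{-1}(\iota_j)$) to reduce everything to the terminal object, where $c(\spec(\kbar))$ is an endomorphism of $\spec(\kbar)$ over itself and hence the identity with no group theory at all. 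Your reduction is airtight --- the only inputs are that disjoint unions are coproducts in $\spec(\kbar)_\et$ and that the $\alpha_i$ are natural \emph{isomorphisms} (so that $\alpha_i(\spec(\kbar))$ identifies $\varphi_i^{-1}(X)$ with $\spec(\kbar)$, which is exactly Remark \ref{basedterminal}) --- and it buys you a cleaner argument with no case split on the number of components and no appeal to the center of $S_n$. You are also right that the finite-type hypothesis plays no role here.
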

\begin{proof}
    Take $U\in \kbar_\et$. It is a disjoint union of $\spec(\kbar)$'s. We would like to show that $$\begin{tikzcd}
                                                                               & (\varphi_1^{-1}\circ p_X^{-1})(U) \arrow[dd, "\:\eta(p_X^{-1}(U))", rightarrow] \\
U \arrow[ru, "\alpha_1(U)", rightarrow] \arrow[rd, "\alpha_2(U)"', rightarrow] &                                                                                      \\
                                                                               & (\varphi_2^{-1}\circ p_X^{-1})(U)                                                   
\end{tikzcd}$$ 
commutes. We get an automorphism $$\alpha_2(U)^{-1}\circ\eta(U)\circ\alpha_1(U):U\longrightarrow U$$ Since $\alpha_1,\alpha_2,$ and $\eta$ are natural isomorphisms, they commute with the action of $\operatorname{Aut}(U/\spec{\kbar})=S_{|\pi_0(U)|}$ (the symmetric group on $|\pi_0(U)|$ elements). So $\alpha_2(U)^{-1}\circ\eta(U)\circ\alpha_1(U)=\operatorname{id}_U$ if $\pi_0(U)\geq 3$ because $S_{\pi_0(U)}$ has trivial center when $\pi_0(U)\geq 3$. If $\pi_0(U)<3$, we may embed $U$ into an \'etale open $V$ with $\pi_0(V)\geq 3$ and deduce the same result by functoriality. Hence, $\alpha_2(U)=\eta(U)\circ\alpha_1(U)$.
\end{proof}
\begin{lem}[\cite{Voe}, \href{https://www.math.ias.edu/vladimir/sites/math.ias.edu.vladimir/files/Etale_topologies_published.pdf\#page=6}{Proposition 2.1}]
\label{2.1}
    Let $\overline{X}$ be a finite type $\kbar$-scheme. The natural map
    $$\xbar(\kbar)\longrightarrow\mor_{\kbar_\et}^\bullet(\spec(\kbar)_\et,\xbar_\et)$$
    is bijective.
\end{lem}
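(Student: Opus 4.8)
The plan is to read both sides geometrically and lean on the two tools just proved: Lemma~\ref{admis}, which identifies fibers of admissible morphisms, and Lemma~\ref{rigid}, which says that a natural isomorphism of the underlying inverse-image functors automatically upgrades to a $2$-isomorphism of $\spec(\kbar)_\et$-morphisms, so that the structure data $\alpha$ never has to be matched by hand. First I would check the map lands in $\mor_{\kbar_\et}^\bullet$: a $\kbar$-point $x\colon\spec(\kbar)\to\xbar$ of a finite type $\kbar$-scheme has closed image, so $|x_\et|$ sends the unique point of $\spec(\kbar)$ to a closed point and $x_\et$ is admissible. The structural fact that makes everything go is that, since $\kbar$ is algebraically closed, every object of $\spec(\kbar)_\et$ is a finite disjoint union of copies of $\spec(\kbar)$, and a $\kbar$-morphism between two such is precisely a map of underlying point sets. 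This is what will convert the set-theoretic bijections of Lemma~\ref{admis} into genuine scheme isomorphisms; the argument is insensitive to the characteristic, following Voevodsky.

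For injectivity, suppose $x_1,x_2\in\xbar(\kbar)$ induce $2$-isomorphic site morphisms, witnessed by a natural isomorphism $\eta\colon (x_1)_\et^{-1}\overset{\sim}{\Rightarrow}(x_2)_\et^{-1}$. For every \'etale open $U\to\xbar$ the isomorphism $\eta(U)$ shows $(x_1)_\et^{-1}(U)$ and $(x_2)_\et^{-1}(U)$ are simultaneously empty or nonempty; since $|(x_i)_\et|^{-1}(U)=|\operatorname{Im}(x_i)_\et^{-1}(U)|$ is determined by exactly this emptiness (the source space being a single point), we get $|(x_1)_\et|=|(x_2)_\et|$. As $|(x_i)_\et|$ sends the point of $\spec(\kbar)$ to the image point $x_i\in|\xbar|$, the two points coincide; and over the algebraically closed field $\kbar$ a closed point with residue field $\kbar$ determines a unique $\kbar$-point, so $x_1=x_2$.

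For surjectivity, take an admissible $\varphi=(\varphi^{-1},\alpha)$ and set $x:=|\varphi|(*)\in|\xbar|$, the image of the unique point $*$ of $\spec(\kbar)$. Admissibility makes $x$ closed, and since $\xbar$ is finite type over the algebraically closed field $\kbar$ its residue field is $\kbar$, giving a $\kbar$-point $x\colon\spec(\kbar)\to\xbar$. I claim $x_\et\cong\varphi$, and by Lemma~\ref{rigid} it is enough to produce a natural isomorphism $\eta\colon x_\et^{-1}\overset{\sim}{\Rightarrow}\varphi^{-1}$. For each \'etale open $U\to\xbar$ one has $x_\et^{-1}(U)=U_x$, the fiber over $x$, while $\varphi^{-1}(U)$ is an object of $\spec(\kbar)_\et$; both are finite disjoint unions of copies of $\spec(\kbar)$. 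Applying Lemma~\ref{admis} with source $\spec(\kbar)$, point $*$, and target $\xbar$ yields a bijection $|\varphi_U|\colon|\varphi^{-1}(U)|\overset{\sim}{\to}|U_x|$, and its inverse, being a bijection of point sets of disjoint unions of $\spec(\kbar)$'s, is realized by a unique $\kbar$-isomorphism $\eta(U)\colon U_x\overset{\sim}{\to}\varphi^{-1}(U)$.

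It then remains to check naturality and collect the conclusion. Given $f\colon U\to V$ over $\xbar$, naturality of $\eta$ reduces (again because morphisms of finite disjoint unions of $\spec(\kbar)$'s are determined on points) to commutativity at the level of point sets, which follows from the functoriality of $X_\et\leadsto|X|$, $\varphi\leadsto|\varphi|$ together with the commuting square relating $\varphi_U$, $\varphi_V$, and $f$ from the proof of Lemma~\ref{admis}. With $\eta$ in hand, Lemma~\ref{rigid} upgrades it to a $2$-isomorphism $x_\et\overset{\sim}{\Rightarrow}\varphi$. I expect the genuinely delicate point to be precisely this naturality: one must ensure the fiberwise bijections of Lemma~\ref{admis} are compatible as $U$ varies and that the passage from point-set bijections to scheme isomorphisms is done coherently over all \'etale opens at once, with rigidity sparing us any separate bookkeeping of the $\alpha$'s.
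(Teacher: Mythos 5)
Your proof is correct and follows essentially the same route as the paper's: admissibility gives a closed image point with residue field $\kbar$, Lemma~\ref{admis} supplies the fiberwise bijections $|\varphi^{-1}(U)|\to|U_{\overline{x}}|$, these promote to scheme isomorphisms because \'etale $\spec(\kbar)$-schemes are finite disjoint unions of copies of $\spec(\kbar)$ (the paper phrases this as ``reduced plus algebraically closed''), and Lemma~\ref{rigid} upgrades the resulting natural isomorphism to a $2$-isomorphism over $\spec(\kbar)_\et$. Your expanded treatment of injectivity and of the naturality of $\eta$ fills in details the paper leaves implicit, but the underlying argument is the same.
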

\begin{proof} Injectivity: Over $\kbar$, a $\kbar$-point is determined by its associated map on topological spaces. 

Surjectivity: Take $\varphi=(\varphi^{-1},\alpha)\in\mor^\bullet_{\kbar_\et}(\spec(\kbar)_\et,\xbar_\et)$. Consider the $\kbar$-point $\overline{x}$ corresponding to $\operatorname{Im}|\varphi|$ and let $\overline{x}_\et=(\overline{x}_\et^{-1},\alpha_x)$ be its induced map on \'etale sites over $\spec(\kbar)_\et$. We have $\overline{x}_\et^{-1}(U)=U_{\overline{x}}$ for all \'etale $U\longrightarrow X$. By Lemma \ref{admis}, $|\varphi|$ gives us an isomorphism $$|\varphi^{-1}(U)_{\spec(\kbar)}|=|\varphi^{-1}(U)|\overset{\sim}{\longrightarrow}|U_{\overline{x}}|=|\overline{x}_\et^{-1}(U)|$$  Since $\kbar$ is algebraically closed and $\varphi^{-1}(U)$ and are $U_{\overline{x}}$ reduced, the bijection on underlying topological spaces promotes to an isomorphism of schemes $\varphi^{-1}(U)\overset{\sim}{\longrightarrow} U_{\overline{x}}$. Hence, we get a natural isomorphism $\varphi^{-1}\overset{\sim}{\Rightarrow}\overline{x}_\et^{-1}$. Lemma \ref{rigid} then tells us that we have a $2$-isomorphism $\varphi\overset{\sim}{\Rightarrow}\overline{x}_{\et}$. So we have found a geometric point that maps to $\varphi$.
\end{proof}
\begin{prop}[\cite{Voe}, \href{https://www.math.ias.edu/vladimir/sites/math.ias.edu.vladimir/files/Etale_topologies_published.pdf\#page=6}{Proposition 2.2}]\label{injprep}
    Let $X$ be a finite type $K$-scheme. The map
    $$\beta:\xbar(\kbar)\longrightarrow\mor_{K_\et}^\bullet(\spec(\kbar)_\et,X_\et)$$
    $$\overline{x}\mapsto (\operatorname{pr}_X)_\et\circ \overline{x}_\et$$
    is bijective \emph($\operatorname{pr}_X:\xbar\longrightarrow X$ is the projection\emph).
\end{prop}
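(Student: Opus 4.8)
The plan is to realize $\beta$ as a composite and thereby reduce the statement to a pure base-change assertion about \'etale sites. By Lemma \ref{2.1}, the assignment $\overline{x}\mapsto\overline{x}_\et$ is a bijection $\xbar(\kbar)\xrightarrow{\sim}\mor^\bullet_{\kbar_\et}(\spec(\kbar)_\et,\xbar_\et)$, and by the very definition of $\beta$ we have $\beta(\overline{x})=(\operatorname{pr}_X)_\et\circ\overline{x}_\et$. Hence $\beta$ factors as the Lemma \ref{2.1} bijection followed by post-composition
$$\Psi\colon\mor^\bullet_{\kbar_\et}(\spec(\kbar)_\et,\xbar_\et)\longrightarrow\mor^\bullet_{K_\et}(\spec(\kbar)_\et,X_\et),\qquad\psi\longmapsto(\operatorname{pr}_X)_\et\circ\psi,$$
so it suffices to prove that $\Psi$ is a bijection. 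First I would check that $\Psi$ is well defined: the structure datum for the composite is assembled from that of $\psi$ and the canonical isomorphism attached to $\operatorname{pr}_X$, and $\Psi$ preserves admissibility because $\operatorname{pr}_X$ sends closed points to closed points (a closed point of $\xbar$ has residue field $\kbar$, and its image in $X$ has residue field finitely generated over $K$ and algebraic, hence finite, hence closed).

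To invert $\Psi$ I would use the base-change functor of Proposition \ref{base}. Given $\varphi=(\varphi^{-1},\alpha)\in\mor^\bullet_{K_\et}(\spec(\kbar)_\et,X_\et)$, base-change along $\kbar\supset K$ by running Proposition \ref{base} over the finite separable subextensions $L\supset K$ and passing to the limit. This produces a $\kbar_\et$-morphism with target $X_{\kbar,\et}=\xbar_\et$ but whose source is $(\spec\kbar)_{\kbar,\et}=\spec(\kbar\otimes_K\kbar)_\et$, a profinite disjoint union of copies of $\spec\kbar$ indexed by $\Gamma_K$. Restricting this morphism along the diagonal component $\spec\kbar\hookrightarrow\spec(\kbar\otimes_K\kbar)$ cut out by the multiplication map yields the candidate $\psi\in\mor^\bullet_{\kbar_\et}(\spec(\kbar)_\et,\xbar_\et)$, and the two composites $\Psi(\psi)\cong\varphi$ and $\psi=\widetilde{\Psi(\psi)}$ should follow by feeding the commuting square of Proposition \ref{base} into the functoriality of $X_\et\goesto|X|$ and upgrading the resulting equalities of topological maps to $2$-isomorphisms via the rigidity Lemma \ref{rigid}, exactly as in the proof of Lemma \ref{2.1}. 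Concretely, Lemma \ref{admis} identifies $|\varphi^{-1}(U)|$ with the fiber $|U_{x_0}|$ over the image point $x_0=\operatorname{Im}|\varphi|$ for every \'etale $U\to X$, so the topological content already matches; the remaining data, namely the $K$-embedding $\kappa(x_0)\hookrightarrow\kbar$ that distinguishes $\psi$ among the finitely many $\kbar$-points lying over $x_0$, is recovered from the structure isomorphism $\alpha$ evaluated on the \'etale opens $\spec(L)\in K_\et$ with $L\supseteq\kappa(x_0)$ finite separable over $K$.

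I expect the main obstacle to be bookkeeping the base change of the \emph{source}. Unlike the target, $\spec(\kbar)$ base-changed to $\kbar$ is not a single point but the profinite scheme $\spec(\kbar\otimes_K\kbar)$, so one must carefully single out the diagonal component and verify that both the structure datum $\alpha$ and admissibility survive this restriction. Moreover, since $\kbar/K$ is typically infinite, Proposition \ref{base} cannot be applied in a single step; it has to be run over the finite separable subextensions of $\kbar$ and reassembled in the limit, and checking that this limit of admissible $2$-isomorphism classes is again a well-defined admissible class---once more using the rigidity of Lemma \ref{rigid} to rule out incompatible gluing of the structure isomorphisms---is the delicate point. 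The perfectness of $K$ enters here to guarantee that $\kbar/K$ is separable, so that the fiber $\spec(\kappa(x_0)\otimes_K\kbar)$ splits as a disjoint union of reduced $\kbar$-points indexed by $\operatorname{Hom}_K(\kappa(x_0),\kbar)$, which is what makes the embedding well defined.
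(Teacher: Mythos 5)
Your proposal is correct and takes essentially the same route as the paper: the paper also inverts $\beta$ by applying Proposition \ref{base} with $E=\kbar$ to obtain $\varphi_{\kbar}$ on $(\spec(\kbar)\times_K\spec(\kbar))_\et$, composing with the diagonal $\Delta_\et$, and identifying the resulting morphism with a $\kbar$-point of $\xbar$ via Lemma \ref{2.1}, checking the two composites from the commuting square of Proposition \ref{base}. The only simplification you miss is that Proposition \ref{base} is stated for an arbitrary separable extension, so it applies to $\kbar/K$ in a single step and no limit over finite subextensions is needed.
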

\begin{proof} 
 Consider $\varphi\in\mor_{K_\et}^\bullet(\spec(\kbar)_\et,X_\et)$. By Proposition \ref{base}, we get a morphism $\varphi_{\kbar}$ fitting into the diagram
    $$\begin{tikzcd}
(\spec(\kbar)\times_K\spec(\kbar))_\et \arrow[r, "\varphi_{\kbar}"] \arrow[d] & \xbar_\et \arrow[d, "(\operatorname{pr}_X)_\et"] \\
\spec(\kbar)_\et \arrow[r, "\varphi"]                                       & X_\et              
\end{tikzcd}$$
Let $\Delta:\spec(\kbar)\longrightarrow\spec(\kbar)\times_K\spec(\kbar)$ be the diagonal and set $\alpha(\varphi)=\operatorname{Im}|\varphi_{\kbar}\circ\Delta_\et|$ (Lemma \ref{2.1}). We'll show that $\alpha=\beta^{-1}$. We have \begin{align*}
    \beta(\alpha(\varphi))&=\beta(\operatorname{Im}|\varphi_{\kbar}\circ\Delta_\et|)\\
    &=(\operatorname{pr}_X)_\et\circ\varphi_{\kbar}\circ\Delta_\et\\
    &=\varphi
\end{align*} 
by the commutativity of the diagram (note that we are conflating $\operatorname{Im}|\varphi_{\kbar}\circ\Delta_\et|$ with the geometric point corresponding to it). In the other direction we have \begin{align*}
    \alpha(\beta(\overline{x}))&=\alpha((\operatorname{pr}_X)_\et\circ\overline{x}_\et)\\
    &=\operatorname{Im}|((\operatorname{pr}_X)_\et\circ\overline{x}_\et)_{\kbar}\circ\Delta_\et|\\
    &=\operatorname{Im}|\overline{x}_\et|\\&=\overline{x}
\end{align*}
The third equality is easier to see while contemplating the following diagram:
$$\begin{tikzcd}
(\spec(\overline{K})\times_K\spec(\overline{K}))_{\et} \arrow[d] \arrow[rd, "((\operatorname{pr}_X)_{\et}\circ\overline{x}_{\et})_{\kbar}"] &                                                             &         \\
\spec(\kbar)_{\et} \arrow[r, "\overline{x}_{\et}"] \arrow[u, "\Delta_{\et}", bend left]                                                     & \overline{X}_{\et} \arrow[r, "(\operatorname{pr}_X)_{\et}"] & X_{\et}
\end{tikzcd}$$
\end{proof}
\begin{notn}
    Let $X$ be the perfection of a finite type $K$-scheme. Given any $\varphi\in\mor_{K_\et}^\bullet(X_\et,Y_\et)$, we let $\overline{\varphi}\in\mor(X(\kbar),Y(\kbar))$ denote the corresponding map on geometric points coming from Proposition \ref{injprep}. More explicitly we have 
    $$\begin{tikzcd}
X(\kbar) \arrow[r, "\sim"] \arrow[d, "\overline{\varphi}"] & {\mor_{K_\et}^\bullet(\spec(\kbar)_\et,X_\et)} \arrow[d, "\varphi_\et"] \\
Y(\kbar) \arrow[r, "\sim"]                      & {\mor_{K_\et}^\bullet(\spec(\kbar)_\et,Y_\et)}
\end{tikzcd}$$
    Observe that, if $f$ is a morphism of schemes and $f^*$ is the map it induces on geometric points, we have $\overline{f_\et}=f^*$.
\end{notn}
We can now prove injectivity.
\begin{prop}\label{inj}
    Let $X=X_0^\perf$ be the perfection of a $K$-scheme of finite type. Let $Y$ be a finite type $K$-scheme. The natural map $$\mor_{K}(X,Y)\longrightarrow\mor_{K_\et}^\bullet(X_\et,Y_\et)$$
    is injective.
\end{prop}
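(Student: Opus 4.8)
The plan is to reduce the assertion to the elementary fact that two morphisms of $K$-schemes out of a reduced scheme which agree on all $\kbar$-points must coincide, and then to supply the reducedness and the density needed to run that argument in the present setting. Suppose $f,g\in\mor_K(X,Y)$ have the same image in $\mor_{K_\et}^\bullet(X_\et,Y_\et)$, i.e. the induced morphisms of sites $f_\et$ and $g_\et$ are $2$-isomorphic (both are admissible, being induced by morphisms of schemes, so $\overline{(-)}$ applies to them). Since the map $\overline{(-)}$ of the preceding notation depends only on the $2$-isomorphism class of an admissible morphism of sites, I would first conclude $\overline{f_\et}=\overline{g_\et}$; by the observation that $\overline{f_\et}=f^*$ for a morphism of schemes $f$, this says exactly that $f$ and $g$ induce the same map $X(\kbar)\to Y(\kbar)$ on geometric points. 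So the whole problem becomes to upgrade equality on $\kbar$-points to equality of morphisms.

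First I would record that $X=X_0^\perf$ is reduced: it is perfect, and a perfect $\FF_p$-algebra is reduced, since a nilpotent element is killed by some power of the Frobenius, which is injective, and hence vanishes. Next I would form the equalizer $Z=(f,g)^{-1}(\Delta_Y)$, the pullback of the diagonal of $Y$ along $(f,g)\colon X\to Y\times_K Y$. Because the diagonal of any scheme is an immersion, $Z$ is a locally closed subscheme of $X$, and a $\kbar$-point $x\colon\spec(\kbar)\to X$ factors through $Z$ if and only if $f\circ x=g\circ x$. By the previous paragraph every $\kbar$-point of $X$ factors through $Z$.

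The remaining step is topological. Every closed point of $X$ has residue field algebraic over $K$---namely the perfect closure of a finite extension of $K$---hence lifts to a $\kbar$-point, so $|Z|$ contains every closed point of $|X|=|X_0|$. As $X_0$ is of finite type over a field, $|X|$ is a Jacobson space in which the closed points are dense. Writing $|Z|=\overline{|Z|}\cap U$ with $U$ open, density of the closed points gives $\overline{|Z|}=|X|$, so $|Z|=U$; then $U$ is an open set containing all closed points, whose closed complement contains none and is therefore empty. Hence $|Z|=|X|$, the locally closed $Z$ is in fact closed, and its ideal sheaf is supported nowhere. Since $X$ is reduced this ideal vanishes, so $Z=X$ and $f=g$.

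The hard part is really the bookkeeping around the two ways this setting departs from Voevodsky's: $X$ is a perfection rather than a finite type scheme, and $Y$ is not assumed separated. Both are mild. Reducedness of the source, which a ``reduced source, separated target'' uniqueness argument would normally draw from a hypothesis on $X$, is here automatic from perfectness; and dropping separatedness of $Y$ only weakens $Z$ from closed to locally closed, a gap that the Jacobson density argument above closes. I expect no genuine obstruction, only the need to check that perfection leaves the topology, the closed points, and the algebraicity of their residue fields over $K$ undisturbed.
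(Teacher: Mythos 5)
Your argument is correct and follows essentially the same route as the paper: reduce, via the bijection between $\kbar$-points and admissible morphisms $\spec(\kbar)_\et\to X_\et$, to the statement that a $K$-morphism out of $X$ is determined by its values on geometric points, and then deduce that from reducedness of the perfection together with density of closed points in the Jacobson space $|X|\cong|X_0|$. The only difference is one of detail: the paper asserts the latter fact with a one-line justification, while you spell it out via the locally closed equalizer $(f,g)^{-1}(\Delta_Y)$, which is a correct and complete way to fill in that step.
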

\begin{proof}
Consider the composite
$$\mor_{K}(X,Y)\longrightarrow\mor(X(\kbar),Y(\kbar))\longrightarrow \mor(\mor_{K_\et}^\bullet(\spec{\kbar}_\et,X_\et),\mor_{K_\et}^\bullet(\spec{\kbar}_\et,Y_\et))$$
Proposition \ref{injprep} gives us
$$\mor(\mor_{K_\et}^\bullet(\spec{\kbar}_\et,X_\et),\mor_{K_\et}^\bullet(\spec{\kbar}_\et,Y_\et))\cong \mor(\xbar(\kbar),\overline{Y}(\kbar))$$
So we would like to show that the composite
$$\mor_{K}(X,Y)\longrightarrow\mor(X(\kbar),Y(\kbar))\longrightarrow\mor(\xbar(\kbar),\overline{Y}(\kbar))$$
is injective. First, notice that $X\cong (X_{0, \operatorname{red}})^\perf=X_{0,\operatorname{red}}^\perf$. For $X_{0,\operatorname{red}}$ and $Y$ of finite type over $K$, a $K$-morphism is determined by what it does on geometric points. This remains true if we replace $X_{0,\operatorname{red}}$ with its perfection because $X_{0,\operatorname{red}}^\perf$ is reduced and $|X_{0,\operatorname{red}}|\cong|X_{0,\operatorname{red}}^\perf|$ (in particular, being of finite type is relevant only for the density of closed points which still holds for the perfection).

Take $f\in\mor_{K}(X,Y)$. By the universal properties of $\xbar$ and $\overline{Y}$, every $f\in\mor_{K}(X,Y)$ extends to a unique $\overline{f}\in\mor_{\kbar}(\xbar,\overline{Y})$ such that, for every $\overline{x}\in X(\kbar)$, the following diagram commutes: $$
    \begin{tikzcd}
                                                                             & \overline{X} \arrow[r, "\overline{f}"] \arrow[d] & \overline{Y} \arrow[d] \\
\spec(\kbar) \arrow[ru, "\overline{\overline{x}}"] \arrow[r, "\overline{x}"] & X \arrow[r, "f"]                                 & Y                     
\end{tikzcd}
$$ The image of $f$ in $\mor(\xbar(\kbar),\overline{Y}(\kbar))$ is the map that sends a geometric point $\overline{x}\in\xbar(\kbar)$ to $\overline{f}\circ\overline{x}$. Take $f,g\in\mor_{K}(X,Y)$ such that $f\circ\overline{x}\neq g\circ\overline{x}$ for some $\overline{x}\in X(\kbar)$. We want to show that $\overline{f}\circ\overline{\overline{x}}\neq\overline{g}\circ\overline{\overline{x}}$. Assume otherwise. Then, by the commutativity of the diagram, $f\circ\overline{x}=g\circ\overline{x}$---a contradiction.
\end{proof}
\begin{rmk}
    Observe that injectivity also holds if $X$ is a reduced $K$-scheme of finite type and/or $Y$ is the perfection of a finite type $K$-scheme.
\end{rmk}
Before proving the main technical proposition, we will record a small lemma for ease of presentation.
\begin{lem}\label{etalemono}
    Let $k$ be a separably closed field. An unramified morphism of $k$-schemes that is injective on underlying toplogical spaces is a monomorphism.
\end{lem}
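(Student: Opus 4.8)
The plan is to show that $f$ is a monomorphism by proving that its diagonal $\Delta_f\colon X\longrightarrow X\times_Y X$ is an isomorphism, which is equivalent to $f$ being a monomorphism. I would assemble this from two standard facts about the diagonal: since $f$ is unramified, $\Delta_f$ is an open immersion; and $\Delta_f$ is surjective precisely when $f$ is universally injective (radicial) [\cite{Stacks}]. An open immersion that is surjective is an isomorphism, so it suffices to prove that $f$ is universally injective. By the criterion cited earlier in the footnote to the topological-invariance discussion ([\cite{Stacks}, Tag 0BR5]), universal injectivity is equivalent to injectivity on underlying topological spaces together with purely inseparable residue field extensions. Injectivity is a hypothesis, so the whole lemma reduces to showing that $\kappa(x)/\kappa(f(x))$ is trivial for every $x\in X$; note that this extension is automatically finite separable because $f$ is unramified, so here ``trivial'' and ``purely inseparable'' coincide.

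To prove $\kappa(x)=\kappa(y)$ (writing $y=f(x)$) I would first reduce to generic points of integral schemes. Replacing $X$ and $Y$ by the reduced closed subschemes $\overline{\{x\}}$ and $\overline{\{y\}}$, the restriction $g\colon\overline{\{x\}}\longrightarrow\overline{\{y\}}$ is again unramified and injective on points (closed immersions are unramified monomorphisms, and a restriction of an injective map is injective), it is dominant, and $x,y$ are its generic points with $\kappa(x)/\kappa(y)$ finite separable of some degree $d\ge 1$; the goal becomes $d=1$. By generic flatness together with spreading out, there is a dense open $V\subseteq\overline{\{y\}}$ over which $g$ is finite, flat, and unramified, hence finite \'etale of degree $d$.

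The key step, and the only place separable-closedness of $k$ enters, is to evaluate this \'etale cover at a well-chosen closed point. Because $k$ is separably closed, every closed point $v$ of a scheme of finite type over $k$ has residue field finite, hence purely inseparable, over $k$, and a purely inseparable extension of a separably closed field is separably closed; since taking the perfection does not change the underlying space, the same holds for perfections of finite type $k$-schemes, and such closed points are dense. Choosing a closed point $v\in V$, the fiber $g^{-1}(v)$ is a finite \'etale $\kappa(v)$-scheme of degree $d$ over the separably closed field $\kappa(v)$, so it splits completely into exactly $d$ points; injectivity of $g$ forces it to have at most one point, whence $d=1$ and $\kappa(x)=\kappa(y)$. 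Running this for every $x$ shows that $f$ has trivial residue field extensions, so $f$ is universally injective, $\Delta_f$ is a surjective open immersion, and $f$ is a monomorphism. I expect the main obstacle to be precisely this reduction to closed points: the density of closed points with separably closed residue field is exactly where both the separable-closedness of $k$ and the (perfection of) finite-type nature of the schemes in our setting are indispensable. Indeed, without such a hypothesis the statement fails, as the finite separable extension $\spec k(t)(\sqrt{t})\longrightarrow\spec k(t)$ shows, so the plan must invoke it in this step.
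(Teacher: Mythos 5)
Your proof is correct and follows the same skeleton as the paper's --- reduce to showing $f$ is universally injective, so that the diagonal (an open immersion by unramifiedness) is surjective, hence an isomorphism --- but you supply genuine content at the one step where the paper's proof merely asserts. The paper says ``since $k$ is separably closed, $f$ is radicial,'' i.e., it takes for granted that the residue field extensions $\kappa(x)/\kappa(f(x))$ are purely inseparable (hence, being finite separable by unramifiedness, trivial). That is exactly the point that needs an argument, and as you observe it is \emph{false} for arbitrary $k$-schemes: $\spec L\longrightarrow\spec k(t)$ for $L/k(t)$ a nontrivial finite separable extension is unramified and injective on underlying spaces but not a monomorphism, because the residue field at a non-closed point need not be separably closed even when $k$ is. So the lemma as stated is missing a finiteness hypothesis, and your argument --- restrict to $\overline{\{x\}}\longrightarrow\overline{\{y\}}$, spread out to a finite \'etale cover of degree $d$ over a dense open, and evaluate at a closed point, whose residue field \emph{is} separably closed by the Nullstellensatz, to force $d=1$ --- is precisely the needed repair. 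This is harmless for the paper, since the lemma is only applied (in Proposition \ref{2.4}) to \'etale morphisms between perfections of finite type $\kbar$-schemes, where your closed-point argument goes through verbatim; but the statement should carry the finite-type (or perfection-of-finite-type) hypothesis you identify, and the proof should contain your reduction. One microscopic quibble: in characteristic $2$ the extension $k(t)(\sqrt{t})/k(t)$ is purely inseparable, so your counterexample should use an Artin--Schreier extension there instead.
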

\begin{proof}
    Let $f:X\longrightarrow Y$ be an unramified injection of $k$-schemes. Since $k$ is separably closed, $f$ is radicial (injective with purely inseparable residue field extensions). So by [\cite{Stacks}, \href{https://stacks.math.columbia.edu/tag/01S4}{Tag 01S4}], the diagonal is surjective. Hence the diagonal is a surjective open immersion and thus an isomorphism.
\end{proof}
The following proposition is key for determining if a morphism of \'etale sites comes from a morphism of schemes. The idea is that a morphism of sites comes from a morphism of schemes if its induced map on geometric points \'etale locally coincides with the induced map on geometric points of some morphism of schemes.
\begin{prop}[\cite{Voe}, \href{https://www.math.ias.edu/vladimir/sites/math.ias.edu.vladimir/files/Etale_topologies_published.pdf\#page=7}{Proposition 2.3}]\label{2.4}
    Let $X$ be a reduced, finite type $K$-scheme or the perfection of a finite type $K$-scheme. Let $Y$ be a finite type $K$-scheme. Take $\varphi\in\mor_{K_\et}^\bullet(X_\et,Y_\et)$. Assume that for all \'etale $U\longrightarrow Y$, there exists some $\widetilde{\varphi}_U\in\mor_{K}(\varphi^{-1}(U),U)$ such that $\widetilde{\varphi}_U(\overline{x})=\overline{\varphi_U}(\overline{x})$ for all $\overline{x}\in\varphi^{-1}(U)(\kbar)$. Then $\varphi$ comes from a morphism of schemes.
\end{prop}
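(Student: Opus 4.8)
The plan is to exhibit an explicit morphism of schemes inducing $\varphi$ and then verify that the morphism of sites it induces agrees with $\varphi$. Since we work with $\spec(K)_\et$-morphisms, Remark \ref{basedterminal} gives $\varphi^{-1}(Y)=X$, so the hypothesis applied to the étale open $U=Y$ (the identity) already produces a genuine morphism of schemes $f:=\widetilde\varphi_Y:X\longrightarrow Y$. Moreover $f$ agrees with $\overline\varphi$ on $\kbar$-points, since $\widetilde\varphi_Y(\overline x)=\overline{\varphi_Y}(\overline x)=\overline\varphi(\overline x)$. I would then show $f_\et\cong\varphi$ as $\spec(K)_\et$-morphisms; by the injectivity already established (Proposition \ref{inj}) such an $f$ is unique, so this identification is the entire content.

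Recall that $f_\et^{-1}$ sends an étale open $U\longrightarrow Y$ to $U\times_Y X$. For each such $U$ I would assemble a comparison map
$$\psi_U=(\widetilde\varphi_U,\,r_U):\varphi^{-1}(U)\longrightarrow U\times_Y X,$$
where $\widetilde\varphi_U$ is the given lift and $r_U:\varphi^{-1}(U)\longrightarrow X$ is the structure map obtained by applying $\varphi^{-1}$ to $U\longrightarrow Y$ (using $\varphi^{-1}(Y)=X$). To see that $\psi_U$ lands in the fibre product I must check that the two composites $\varphi^{-1}(U)\rightrightarrows Y$ coincide; both agree on $\kbar$-points with the structure composite of $\overline{\varphi_U}$ (by the hypothesis and the functoriality of $\varphi\mapsto\overline\varphi$ from Proposition \ref{injprep}), and since $\varphi^{-1}(U)$ is reduced—being étale over the reduced scheme $X$—and $Y$ is of finite type, agreement on $\kbar$-points forces equality. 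By construction $\psi_U$ is a morphism over $X$ between two schemes étale over $X$, hence is itself étale.

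The crux—and the step I expect to be the main obstacle—is proving that each $\psi_U$ is an isomorphism. I would verify this fibrewise over closed points and then propagate by density. After base-changing to $\kbar$ (Proposition \ref{base}) and fixing a closed point $\overline x$, the geometric fibre of $\varphi^{-1}(U)$ over $\overline x$ maps bijectively onto $U_{\overline\varphi(\overline x)}$ by the admissibility input of Lemma \ref{admis}, and this bijection is precisely $\widetilde\varphi_U=\overline{\varphi_U}$ by hypothesis; the fibre of $U\times_Y X$ over $\overline x$ is the same $U_{\overline\varphi(\overline x)}$ because $f(\overline x)=\overline\varphi(\overline x)$. As all these fibres are disjoint unions of copies of $\spec(\kbar)$, the map $\psi_U$ restricts to an isomorphism on every geometric fibre over a closed point. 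Now $\psi_U$ is étale, hence open, and $X$ is Jacobson with dense closed points (immediate for reduced finite type schemes, and valid for perfections since $|X|\cong|X_0|$). Openness together with the closed-point fibres being isomorphisms forces $\psi_U$ to be surjective and, by the same density argument applied to the (open-immersion) diagonal of $\psi_U$, a monomorphism, exactly as in Lemma \ref{etalemono}. A surjective étale monomorphism is an isomorphism.

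Finally, naturality of the $\psi_U$ in $U$ follows from uniqueness: any two morphisms of schemes from the reduced scheme $\varphi^{-1}(U')$ into the finite type scheme $U$ that agree on $\kbar$-points coincide, so the comparison maps automatically commute with restriction, yielding a natural isomorphism $f_\et^{-1}\Rightarrow\varphi^{-1}$. Compatibility with the $\spec(K)_\et$-base data is then a routine rigidity verification in the spirit of Lemma \ref{rigid}, producing the desired $2$-isomorphism $f_\et\cong\varphi$ and completing the proof.
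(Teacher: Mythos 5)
Your proposal is correct and follows essentially the same route as the paper's proof: both take $f=\widetilde\varphi_Y$, build the comparison map $\varphi^{-1}(U)\to U\times_Y X=f_\et^{-1}(U)$ from $\widetilde\varphi_U$ and the structure map, check it is an isomorphism fibrewise over closed points after base change to $\kbar$ via Lemma \ref{admis}, conclude via ``surjective \'etale monomorphism is an isomorphism'' (Lemma \ref{etalemono}), and handle naturality and the $\spec(K)_\et$-structure by reducedness, density of closed points, Lemma \ref{rigid}, and fpqc descent. The only differences are cosmetic (e.g.\ your natural isomorphism is written in the opposite, but invertible, direction).
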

\begin{proof}
    We will prove the theorem for a reduced, finite type $K$-scheme $X$. As in Proposition \ref{inj}, we may replace $X$ with its perfection and the proof goes through just the same. We would like to show that the \'etalification of $\widetilde{\varphi}_Y=\widetilde{\varphi}$ is $\varphi=(\varphi^{-1},\alpha)$, that is, we want to show that there is a 2-isomorphism $\widetilde{\varphi}_\et=(\widetilde{\varphi}_\et^{-1},\widetilde{\alpha})\cong(\varphi^{-1},\alpha)=\varphi$. Consider the diagram
    $$
\begin{tikzcd}
\varphi^{-1}(U) \arrow[rrd, "\widetilde{\varphi}_U", shift left=3] \arrow[rdd] &                                                                        &             \\
                                                                               & \widetilde{\varphi}^{-1}_\et(U) \arrow[r] \arrow[d] & U \arrow[d] \\
                                                                               & X \arrow[r, "\widetilde{\varphi}"]                                     & Y          
\end{tikzcd}$$
The square commutes by construction because $\widetilde{\varphi}_\et^{-1}(U)=X\times_{\widetilde{\varphi}}U$. By assumption, the whole diagram commutes on geometric points ($\widetilde{\varphi}$ and $\widetilde{\varphi}_U$ are the same as $\varphi$ and $\varphi|_U$ on geometric points). Since $X$ is reduced, $\varphi^{-1}(U)$ is reduced because it is \'etale over $X$. Everything in sight is of finite type over $K$ because we are using the quasicompact (and separated) \'etale site. Hence, the diagram commutes. So we get a diagram
$$\begin{tikzcd}
\varphi^{-1}(U) \arrow[rrd, "\widetilde{\varphi}_U", shift left=3] \arrow[rdd] \arrow[rd, "\exists!", dotted] &                                                                        &             \\
                                                                                                              & \widetilde{\varphi}^{-1}_\et(U) \arrow[r] \arrow[d] & U \arrow[d] \\
                                                                                                              & X \arrow[r, "\widetilde{\varphi}"]                                     & Y          
\end{tikzcd}$$
We may compatibly do this for all $U$, so we get a natural transformation $\eta:\varphi^{-1}\Rightarrow \widetilde{\varphi}^{-1}_\et$. We would like to show that $\eta$ is a natural isomorphism. Lift the diagram to $\kbar$. It suffices to show that $\eta_{\kbar}$ is an isomorphism by fpqc descent. To show that $\eta_{\kbar}(\overline{U}):\varphi^{-1}(\overline{U})\longrightarrow\widetilde{\varphi}_\et^{-1}(\overline{U})$ is an isomorphism, it suffices to show that it induces a bijection $|(\varphi^{-1}(\overline{U}))_{\overline{x}}|\longrightarrow |(\widetilde{\varphi}^{-1}_\et(\overline{U}))_{\overline{x}}|$ for all $\overline{x}\in\xbar(\kbar)$. Indeed, $\varphi^{-1}(\overline{U})\longrightarrow\widetilde{\varphi}^{-1}_\et(\overline{U})$ is \'etale by cancellation, so if $\eta_{\kbar}(\overline{U})$ induces a bijection on fibers over closed points of $\xbar$, it is surjective (because surjectivity is a pointwise condition) and monic by Lemma \ref{etalemono} (note that $K$ is perfect). Thus $\eta_{\kbar}(U)$ would be a surjective \'etale monomorphism, and \'etale monomorphisms are open immersions by [\cite{Stacks}, \href{https://stacks.math.columbia.edu/tag/025G}{Tag 025G}].

Lemma $\ref{admis}$ gives us isomorphisms
\begin{align*}
|\varphi^{-1}(\overline{U})_{\overline{x}}|&\overset{\sim}{\longrightarrow}|\overline{U}_{{|\varphi|}(\overline{x})}|\\
|\widetilde{\varphi}^{-1}_\et(\overline{U})_{\overline{x}}|&\overset{\sim}{\longrightarrow}|\overline{U}_{|{\widetilde\varphi}_\et|(\overline{x})}|=|\overline{U}_{\widetilde{\varphi}(\overline{x})}|    
\end{align*}
for all $\overline{x}\in \xbar(\kbar)$. By assumption, $|\varphi|(\overline{x})=\widetilde{\varphi}(\overline{x})$ (where we conflate $\widetilde{\varphi}$ with the map it induces on geometric points). So we have a commutative diagram 
\[\begin{tikzcd}
	{|\varphi^{-1}(\overline{U})_{\overline{x}}|} & {|\overline{U}_{|\varphi|(\overline{x})}|} \\
	{|\widetilde{\varphi}^{-1}_\et(\overline{U})_{\overline{x}}|} & {|\overline{U}_{\widetilde{\varphi}(\overline{x})}|}
	\arrow["\sim", from=1-1, to=1-2]
	\arrow["{\eta_{\overline{K},x}}"', from=1-1, to=2-1]
	\arrow[no head, from=1-2, to=2-2]
	\arrow[shift left, no head, from=1-2, to=2-2]
	\arrow["\sim", from=2-1, to=2-2]
\end{tikzcd}\]
We conclude that $\eta_{\kbar,x}$ is an isomorphism for all $\overline{x}\in \xbar(\kbar)$ as desired.

Now we need to show that $\eta$ is a 2-isomorphism over $\spec(K)_\et$. It suffices to show that, for every finite extension $E\supset K$, the following diagram (of isomorphisms) commutes:
$$\begin{tikzcd}
                                                                          & \varphi^{-1}(Y_E) \arrow[rd, "\eta(Y_E)"] &                                   \\
X_E \arrow[ru, "\alpha(E)"] \arrow[rr, "{\widetilde{\alpha}}(E)"] &                                            & \widetilde{\varphi}^{-1}_\et(Y_E)
\end{tikzcd}$$
Indeed, we need to check commutativity after pulling back along each \'etale open of the base (recall that a 2-morphism involves composing with $p_X^{-1}$ and $p_Y^{-1}$). This is a diagram of isomorphisms over $X$, so we can check commutativity on the fibers over closed points of $X$ (using the fact that $X$ is reduced and of finite type). Let $\overline{x}$ be a geometric point of $X$. We have a natural isomorphism $\eta:\varphi^{-1} \overset{\sim}{\longrightarrow}\widetilde{\varphi}^{-1}_\et$. So we get a natural isomorphism $$\eta_{\overline{x}}:\overline{x}_\et^{-1}\circ\varphi^{-1}\overset{\sim}{\longrightarrow}\overline{x}^{-1}_\et\circ\widetilde{\varphi}^{-1}_\et$$
We want to show that the following diagram commutes
\[\begin{tikzcd}
	& {x_\et^{-1}(\varphi^{-1}(Y_E))=\varphi^{-1}(Y_E)_x} \\
	{\spec(\kbar)\times_K\spec(E)} && {x_\et^{-1}(\widetilde\varphi_\et^{-1}(Y_E))=\widetilde\varphi^{-1}_\et(Y_E)_x}
	\arrow["{\eta_x(Y_E)}", from=1-2, to=2-3]
	\arrow["{\alpha_x(E)}", from=2-1, to=1-2]
	\arrow["{\widetilde{\alpha}_x(E)}", from=2-1, to=2-3]
\end{tikzcd}\]
Everything is reduced and of finite type so we can check commutativity on geometric points. Geometric points depend only on the base change of everything to $\kbar$. And the diagram commutes after lifting everything to $\kbar$ by Lemma \ref{rigid} (applied to $\eta_{x,\kbar}$).
\end{proof}
\begin{prop}\label{fpqc}
    Let $X$ and $Y$ be $K$-schemes of finite type with $X$ reduced. Let $\varphi\in\mor_{K}^\bullet(X_\et,Y_\et)$ and let $E$ be an algebraic extension of $K$. If there exists some $\widetilde{\varphi}_E\in\mor_{E}(X_E,Y_E)$ such that $\widetilde{\varphi}_E(\overline{x})=\overline{\varphi_E}(\overline{x})$ for all $\overline{x}\in X_E(\kbar)$, then there exists some $\widetilde{\varphi}\in\mor_{K}(X,Y)$ such that $\widetilde{\varphi}(\overline{x})=\overline{\varphi}(\overline{x})$ for all $\overline{x}\in X(\kbar)$.
\end{prop}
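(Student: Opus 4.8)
The plan is to recognize the statement as a Galois descent problem for the morphism $\widetilde{\varphi}_E$, exploiting that $K$ is perfect so that $X$ is geometrically reduced and morphisms out of its base changes are pinned down by their effect on $\kbar$-points. First I would reduce to the case that $E/K$ is finite Galois. Since $X$ and $Y$ are of finite type and $E$ is the filtered union of its finite subextensions (all separable, as $K$ is perfect), a standard spreading-out argument shows that $\widetilde{\varphi}_E$ is the base change of some $\widetilde{\varphi}_{E_0}\in\mor_{E_0}(X_{E_0},Y_{E_0})$ with $E_0/K$ finite; enlarging $E_0$ to its Galois closure, I may assume $E/K$ is finite Galois with group $G=\Gamma_K/\Gamma_E$. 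The geometric-point hypothesis persists under these changes: by the compatibility of base change with the projection down to $X_\et$ in Proposition~\ref{base}, every map on geometric points $\overline{\varphi_{E'}}$ equals $\overline{\varphi}\colon X(\kbar)\to Y(\kbar)$, so the identity $\widetilde{\varphi}_{E'}(\overline{x})=\overline{\varphi}(\overline{x})$ continues to hold.

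Next I would invoke Galois descent for morphisms: since $\spec(E)\to\spec(K)$ is faithfully flat and quasicompact, $\mor_K(X,Y)$ is identified with the $G$-fixed points of $\mor_E(X_E,Y_E)$, so $\widetilde{\varphi}_E$ descends to a $K$-morphism $\widetilde{\varphi}\colon X\to Y$ precisely when it is $G$-equivariant for the actions of $G$ on $X_E$ and $Y_E$ through the factor $\spec(E)$. Equivalently, for each $\sigma\in G$ I must show that $\widetilde{\varphi}_E$ agrees with its conjugate $\sigma_Y\circ\widetilde{\varphi}_E\circ\sigma_X^{-1}$. Both are morphisms from $X_E$ to $Y_E$; because $K$ is perfect, $X$ is geometrically reduced, so $X_E$ is reduced and of finite type over $E$, while $Y_E$ is separated and of finite type. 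Hence the locus where two such morphisms coincide is closed, and if it contains every $\kbar$-point it must be all of $X_E$ (a closed subscheme of a reduced Jacobson scheme containing all closed points is the whole scheme). So it suffices to compare the two maps on $X_E(\kbar)=X(\kbar)$.

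The key input is that $\overline{\varphi}$ is $\Gamma_K$-equivariant, hence $G$-equivariant: under the identification $X(\kbar)\cong\mor_{K_\et}^\bullet(\spec(\kbar)_\et,X_\et)$ of Proposition~\ref{injprep}, the Galois action is precomposition with the site automorphisms induced by $\aut(\kbar/K)$, whereas $\overline{\varphi}$ is postcomposition with the fixed $\spec(K)_\et$-morphism $\varphi$, and these two operations commute. Granting this, $\widetilde{\varphi}_E$ acts as $\overline{\varphi}$ on $\kbar$-points by hypothesis, while its $\sigma$-conjugate sends $\overline{x}$ to $\sigma\cdot\overline{\varphi}(\sigma^{-1}\cdot\overline{x})=\overline{\varphi}(\overline{x})$ by equivariance. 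Thus the conjugate agrees with $\widetilde{\varphi}_E$ on all $\kbar$-points, so the two morphisms are equal, $\widetilde{\varphi}_E$ is $G$-equivariant, and it descends to the desired $\widetilde{\varphi}$, which then satisfies $\widetilde{\varphi}(\overline{x})=\widetilde{\varphi}_E(\overline{x})=\overline{\varphi}(\overline{x})$ for all $\overline{x}\in X(\kbar)$.

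I expect the main obstacle to be pinning down the $\Gamma_K$-equivariance of $\overline{\varphi}$ cleanly from the definitions --- that is, translating ``$\varphi$ is a $\spec(K)_\et$-morphism'' into the precise statement that the induced map on geometric points commutes with the Galois action --- together with the bookkeeping that guarantees $X_E$ is reduced (which is exactly where perfectness of $K$ is used) so that the descent datum can legitimately be checked on geometric points alone.
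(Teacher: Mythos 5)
Your proof is correct and is essentially the paper's argument in different packaging: the paper applies fpqc descent along $\spec(E)\to\spec(K)$ directly (for arbitrary algebraic $E$) and checks the resulting equalizer condition on $X_E\times_X X_E$ on geometric points, using that $X$ is reduced of finite type and that $\overline{\varphi_E}=\overline{\varphi}$ is independent of the base --- which is exactly the content of your $G$-equivariance check. Your extra reduction to a finite Galois subextension via spreading out is harmless but not needed, since the sheaf condition for $\Hom$ along the fpqc cover $\spec(E)\to\spec(K)$ holds for any field extension.
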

\begin{proof}
    By fpqc descent we have an equalizer diagram $$
        \begin{tikzcd}
{\mor_{K}(X,Y)} \arrow[r] & {\mor_E(X_E,Y_E)} \arrow[r, shift right] \arrow[r, shift left] & {\mor_{K}(X_E\times_XX_E,Y)}
\end{tikzcd}
   $$
   The equalizer condition is determined by what happens on geometric points because $X$ and $Y$ are of finite type and $X$ is reduced. So the equalizer condition for $\widetilde{\varphi}_E\in\mor_E(X_E,Y_E)$ says that $\widetilde{\varphi}_E(\overline{x})=\overline{\varphi_E}(\overline{x})$ for all $x\in X_E(\overline{K})$. The equalizer is $\mor_{K}(X,Y)$ so there exists a $\widetilde{\varphi}\in\mor_{K}(X,Y)$ with $\widetilde{\varphi}(\overline{x})=\overline{\varphi}(\overline{x})$ for all $\overline{x}\in X(\kbar)$.\end{proof}
\begin{rmk}
    As before, Proposition \ref{fpqc} is also true if $X$ is the perfection of a finite type $K$-scheme.
\end{rmk}
\begin{lem}[\cite{Voe}, \href{https://www.math.ias.edu/vladimir/sites/math.ias.edu.vladimir/files/Etale_topologies_published.pdf\#page=8}{Proposition 2.5}]\label{2.6l}
    Let $X$ be a reduced, finite type $K$-scheme. Let $E$ be an algebraic extension of $K$. The natural map $$\mor_{K}(\spec(E),X)\longrightarrow\mor_{K_\et}^\bullet (\spec(E)_\et,X_\et)$$
    is a bijection.
\end{lem}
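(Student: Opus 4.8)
\emph{Proof proposal.} The plan is to sandwich both sides between the set of $\Gamma_K$-equivariant maps from the geometric points of $\spec(E)$ to those of $X$, and then invoke Galois descent. Write $\Gamma_K=\operatorname{Gal}(\kbar/K)$ (legitimate since $K$ is perfect, so $\kbar=K^\sep$) and set $S=\spec(E)(\kbar)=\Hom_K(E,\kbar)$. Because $K$ is perfect and $E/K$ is algebraic, $E$ is again perfect, so $\kbar/E$ is Galois with group $\Gamma_E$ and $S\cong\Gamma_K/\Gamma_E$ is a transitive $\Gamma_K$-set. First I would record the triangle
$$\mor_K(\spec(E),X)\xrightarrow{\ (-)_\et\ }\mor^\bullet_{K_\et}(\spec(E)_\et,X_\et)\xrightarrow{\ \rho\ }\Hom_{\Gamma_K}(S,X(\kbar)),$$
where $\rho$ sends $\varphi$ to the map $S\to X(\kbar)=\mor^\bullet_{K_\et}(\spec(\kbar)_\et,X_\et)$ obtained by precomposing $\varphi$ with each geometric point $\spec(\kbar)\to\spec(E)$ and applying Proposition \ref{injprep}; the $\Gamma_K$-equivariance of $\rho(\varphi)$ is ensured precisely by the $\spec(K)_\et$-structure $\alpha$ carried by $\varphi$. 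The triangle commutes because $\overline{f_\et}=f^\ast$ for a morphism of schemes $f$ (the Notation preceding Proposition \ref{inj}).

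Next I would observe that the composite $\rho\circ(-)_\et$ is a bijection. A $\Gamma_K$-equivariant map $\Gamma_K/\Gamma_E\to X(\kbar)$ is determined by the image of the base coset, which must be a $\Gamma_E$-fixed geometric point; hence $\Hom_{\Gamma_K}(S,X(\kbar))\cong X(\kbar)^{\Gamma_E}$, and this equals $X(E)=\mor_K(\spec(E),X)$ by Galois descent of rational points along the Galois extension $\kbar/E$ (here perfectness of $E$ guarantees $\kbar^{\Gamma_E}=E$). Granting that $\rho$ itself is injective, all three conclusions fall out at once: injectivity of $(-)_\et$ follows from injectivity of the composite, and surjectivity follows because, given any $\varphi$, surjectivity of the composite produces an $f$ with $\rho(f_\et)=\rho(\varphi)$, whence $f_\et\cong\varphi$ by injectivity of $\rho$.

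The heart of the matter is therefore to prove that $\rho$ is injective, i.e.\ that an admissible $\spec(K)_\et$-morphism $\spec(E)_\et\to X_\et$ is determined up to $2$-isomorphism by its induced $\Gamma_K$-equivariant map on geometric points. For this I would base change to $\kbar$ using Proposition \ref{base}: since $E/K$ is separable, $\spec(E)\times_K\spec(\kbar)\cong\coprod_{s\in S}\spec(\kbar)$, so $\varphi_{\kbar}$ restricts on each component to a morphism $\spec(\kbar)_\et\to\xbar_\et$. By Lemma \ref{2.1} each such restriction is pinned down by its single geometric point, and by Lemma \ref{rigid} the componentwise $2$-isomorphisms are canonical; thus two morphisms with the same $\rho$-image become $2$-isomorphic after base change to $\kbar$, compatibly with the descent data, and one descends the $2$-isomorphism by fpqc descent along $\spec(\kbar)\to\spec(K)$.

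The main obstacle I anticipate is the bookkeeping in this final step: tracking the structure isomorphisms $\alpha$ through the base change of Proposition \ref{base} so that the componentwise rigidity of Lemmas \ref{2.1} and \ref{rigid} assembles into a single $\Gamma_K$-equivariant datum that genuinely descends. A secondary technical point is the case of infinite algebraic $E$, where $S$ is only profinite and $\spec(E)$ is not of finite type; here I would either run the same descent argument verbatim (the decomposition of $\spec(E)_{\kbar}$ and the componentwise use of Lemma \ref{2.1} are insensitive to the cardinality of $S$) or reduce to the finite case by writing $E=\colim E'$ over finite subextensions and using that $X$ is of finite type so that $\mor_K(\spec(E),X)=\colim_{E'}\mor_K(\spec(E'),X)$.
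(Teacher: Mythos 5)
Your skeleton --- sandwiching both sides between $\Hom_{\Gamma_K}(S,X(\kbar))$ and checking that the outer composite is the Galois-descent bijection $X(E)\cong X(\kbar)^{\Gamma_E}$ --- is sound, and is essentially a repackaging of what the paper gets from Propositions \ref{inj} and \ref{injprep}. Where you diverge from the paper is in how surjectivity is closed: you reduce everything to injectivity of $\rho$, i.e.\ to the claim that an admissible $\spec(K)_\et$-morphism out of $\spec(E)_\et$ is determined up to $2$-isomorphism by its top-level map on geometric points, and you propose to prove this by producing a $2$-isomorphism over $\kbar$ (componentwise, via Lemmas \ref{2.1} and \ref{rigid}) and then descending it. The paper is structured precisely to avoid ever descending a $2$-isomorphism: it descends only the underlying morphism of schemes (Proposition \ref{fpqc}, where the equalizer condition is checked on geometric points using that the source is reduced and the target of finite type), does this for $\varphi_U$ for \emph{every} \'etale $U\to X$, and then invokes Proposition \ref{2.4}, which builds the natural transformation $\eta:\varphi^{-1}\Rightarrow\widetilde{\varphi}_\et^{-1}$ directly over $K$ from the universal property of $\widetilde{\varphi}_\et^{-1}(U)=X\times_{\widetilde{\varphi}}U$, and uses base change to $\kbar$ only to check that $\eta$ is an isomorphism --- descent of a \emph{property}, which is standard, rather than descent of a \emph{datum}.

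The gap in your proposal is exactly the step you flag as ``the main obstacle'': injectivity of $\rho$ carries the entire weight of the lemma, and ``descend the $2$-isomorphism by fpqc descent'' is not an argument until you verify the cocycle condition for $\eta_{\kbar}:\varphi_{1,\kbar}^{-1}\Rightarrow\varphi_{2,\kbar}^{-1}$ over $\spec(\kbar\otimes_K\kbar)$. This does hold here, because the componentwise isomorphisms produced by Lemma \ref{2.1} are canonical (they are forced by bijections of underlying spaces of reduced $\kbar$-schemes), but that canonicity is the whole point and must be stated and used; as written you have only asserted that the bookkeeping can be done. Relatedly, if you instead wanted to close the argument by citing Proposition \ref{2.4} as the paper does, your $\rho$ records too little: the hypothesis of Proposition \ref{2.4} is \'etale-local on the target, requiring a scheme morphism $\varphi^{-1}(U)\to U$ agreeing with $\varphi_U$ on geometric points for every \'etale $U\to X$, whereas $\rho(\varphi)$ only remembers the case $U=X$. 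Either supply the cocycle verification via rigidity, or add the \'etale-local step and route through Propositions \ref{fpqc} and \ref{2.4}; as it stands the proof is not complete.
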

\begin{proof}
    Proposition \ref{inj} gives us injectivity. Take $\varphi\in\mor_{K_\et}(\spec(E)_\et,X_\et)$. By Proposition \ref{base}, we get $\varphi_{\kbar}$ and a commutative diagram
    $$\begin{tikzcd}
(\spec(E)\times_K\spec(\kbar))_\et \arrow[r, "\varphi_{\kbar}"] \arrow[d] & \xbar_\et \arrow[d] \\
\spec(E)_\et \arrow[r, "\varphi"]                                         & X_\et              
\end{tikzcd}$$
Now $\spec(E)\times_K\spec(\kbar)$ is just a disjoint union of $\spec(\kbar)$'s. So by Lemma \ref{2.1}, $\varphi_{\kbar}$ comes from a morphism of schemes. Proposition \ref{fpqc} then tells us that $\varphi$ coincides with some morphism of schemes on geometric points. We may conclude the same for $\varphi_U$ for all $U\in X_\et$. So $\varphi$ comes from a morphism of schemes by Proposition \ref{2.4}. 
\end{proof}
This gives us [\cite{Voe}, \href{https://www.math.ias.edu/vladimir/sites/math.ias.edu.vladimir/files/Etale_topologies_published.pdf#page=9}{Proposition 2.8}]: 
\begin{prop}\label{perfsq}
    Let $X=X_0^\perf$ be the perfection of a geometrically connected $K$-scheme of finite type. Assume that $X(K)\neq\varnothing$. Let $Y$ be a geometrically connected $K$-scheme of finite type. Then for any $\varphi\in\mor^\bullet_{K_\et}(X_\et,Y_\et)$, we have a commutative diagram
    $$
        \begin{tikzcd}
X(\kbar) \arrow[d] \arrow[rr, "\overline{\varphi}"]                               &  & Y(\kbar) \arrow[d]                                          \\
{\underset{E\supset K}{\varinjlim}H^1\big(\Gamma_E,\pixbar\big)} \arrow[rr] &  & {\underset{E\supset K}{\varinjlim}H^1\big(\Gamma_E,\pi_1^{\et}(\overline{Y},\overline{\varphi}(\overline{x}))\big)}
\end{tikzcd}
    $$
    where the colimit is indexed over finite extensions of $K$ and $H^1$ denotes group cohomology.
\end{prop}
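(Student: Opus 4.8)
The plan is to realize both vertical maps through the \'etale homotopy exact sequence and the non-abelian Galois cohomology that classifies its sections. Since $X$ is geometrically connected and $X(K)\neq\varnothing$, and since perfection is a universal homeomorphism so that $\pi_1^\et(X,\overline{x})\cong\pi_1^\et(X_0,\overline{x})$ by topological invariance of the \'etale site, I would first record the homotopy exact sequence
$$1\longrightarrow\pixbar\longrightarrow\pi_1^\et(X,\overline{x})\longrightarrow\Gamma_K\longrightarrow 1$$
and fix, once and for all, the section $s_0$ coming from the chosen rational point $x_0\in X(K)$. A geometric point $\overline{x}\in X(\kbar)$ is defined over some finite extension $E\supset K$, i.e.\ lifts to $x_E\in X(E)$, and hence determines a section $s_{x_E}\colon\Gamma_E\to\pi_1^\et(X,\overline{x})$ of the restricted sequence. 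Comparing $s_{x_E}$ with $s_0$ produces a $1$-cocycle $\gamma\mapsto s_{x_E}(\gamma)s_0(\gamma)^{-1}$ valued in $\pixbar$, whose class in $H^1(\Gamma_E,\pixbar)$ I take as the image of $\overline{x}$ after passing to the colimit. The same recipe on the $Y$ side uses the rational point $y_0:=\overline{\varphi}(x_0)$; here I first check that $\overline{\varphi}$ is $\Gamma_K$-equivariant---so that it carries the $\Gamma_K$-fixed point $x_0$ to a $\Gamma_K$-fixed, hence $K$-rational, point $y_0$---which follows because $\varphi$ is a $\spec(K)_\et$-morphism, so pre-composition by $\Gamma_K$ (acting on the source point) commutes with post-composition by $\varphi$ in the description of $\overline{\varphi}$ from Proposition \ref{injprep}.

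Next I would produce the bottom horizontal map. A morphism of \'etale sites induces a homomorphism of \'etale fundamental groups (the fundamental group is recovered from the site via its finite locally constant objects), and because $\varphi$ respects the $\spec(K)_\et$-structure this homomorphism is compatible with the projections to $\Gamma_K$ and restricts to a $\Gamma_K$-equivariant map $\varphi_*\colon\pixbar\to\pi_1^\et(\overline{Y},\overline{y}_0)$ on geometric fundamental groups. Functoriality of group cohomology then gives the induced maps $H^1(\Gamma_E,\pixbar)\to H^1(\Gamma_E,\pi_1^\et(\overline{Y},\overline{y}_0))$ and, in the colimit, the bottom arrow of the square (transporting the base point $\overline{y}_0$ to $\overline{\varphi}(\overline{x})$ along the connected scheme $\overline{Y}$, which is harmless after passing to conjugacy classes).

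The heart of the argument is commutativity, and here I would exploit that $\overline{\varphi}$ is defined, via Proposition \ref{injprep}, precisely by composing the point $\overline{x}$, viewed as a morphism of sites $\spec(E)_\et\to X_\et$, with $\varphi$. Since taking $\pi_1^\et$ is functorial in morphisms of sites, the section attached to $\overline{\varphi}(\overline{x})$ is literally $\varphi_*$ applied to the section attached to $\overline{x}$; that is, $s_{\overline{\varphi}(x_E)}=\varphi_*\circ s_{x_E}$ up to $\pi_1^\et(\overline{Y})$-conjugacy, because both are obtained by applying $\pi_1^\et$ to $\varphi\circ\overline{x}_\et$. Applying the same functoriality to $x_0$ shows that $\varphi_*$ carries the base section $s_0$ on the $X$ side to the base section $s_0^{Y}$ on the $Y$ side. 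Translating the identity of sections into cohomology then shows that the two cocycles $\gamma\mapsto s_{\overline{\varphi}(x_E)}(\gamma)s_0^{Y}(\gamma)^{-1}$ and $\varphi_*\big(\gamma\mapsto s_{x_E}(\gamma)s_0(\gamma)^{-1}\big)$ agree, which is exactly the commutativity of the square.

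I expect the main obstacle to be the careful management of base points and the conjugacy ambiguity inherent in non-abelian $H^1$: the target group $\pi_1^\et(\overline{Y},\overline{\varphi}(\overline{x}))$ has a base point depending on $\overline{x}$, whereas $\varphi_*$ is naturally defined relative to the fixed base point $\overline{y}_0$, and reconciling these requires the connectedness of $\overline{Y}$ together with the observation that passing to the colimit over $E$ and to conjugacy classes of sections renders the construction independent of these choices. A secondary technical point is verifying that a $\spec(K)_\et$-morphism of sites induces a well-defined map on fundamental groups compatible with $\overline{\varphi}$ on geometric points; this is where the functoriality of $\pi_1^\et$ at the level of sites, rather than schemes, must be invoked.
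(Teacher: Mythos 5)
Your proposal is correct and follows essentially the same route as the paper: vertical maps from sections of the split homotopy exact sequence (the splitting supplied by $X(K)\neq\varnothing$), the bottom map from the $\Gamma_K$-equivariant homomorphism of geometric fundamental groups induced by $\varphi$ as a morphism of sites, and commutativity from functoriality of $\pi_1^\et$ applied to $\varphi\circ\overline{x}_\et$. The only cosmetic difference is that the paper obtains the top map $X(E)\to Y(E)$ via Lemma \ref{2.6l} rather than by directly checking $\Gamma_K$-equivariance of $\overline{\varphi}$, and it leaves the cocycle-level verification that you spell out implicit.
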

\begin{proof}
Observe that we have $X(E)\cong X_{0,\operatorname{red}}(E)$ for all $E\supset K$ because $E$ is perfect and $X\cong X_{0,\operatorname{red}}^\perf$. Let $\varphi=(\varphi^{-1},\alpha):X_\et\longrightarrow Y_\et$ be an admissible morphism. Then we get a diagram $$\begin{tikzcd}
X(E) \arrow[d] \arrow[r]        & Y(E) \arrow[d]                                                                   \\
{H^1\big(\Gamma_E,\pixbar\big)} & {H^1\big(\Gamma_E,\pi_1^\et(\overline{Y},\overline{\varphi}(\overline{x}))\big)}
\end{tikzcd}$$ where the top map, which is induced by $\varphi$, comes from Lemma \ref{2.6l}, and the vertical maps come from sections of the homotopy exact sequence (see e.g., [\cite{Stix}, \href{https://link.springer.com/book/10.1007/978-3-642-30674-7}{Proposition 8/Definition 20}]). We know that $\overline{\varphi}$ sends geometric points to geometric points, so we get a map $\pi(\varphi):\pi_1^\et(X,\overline{x})\longrightarrow\pi_1^\et({Y},\overline{\varphi}(\overline{x}))$. Note that $\pi_1^\et$ is unaffected by perfection. Since $\varphi^{-1}$ sends coverings to coverings, we get a morphism of homotopy exact sequences 
\[\begin{tikzcd}
	& {\pi_1^\et(\xbar,\overline{x})} & {\pi_1^\et(X,\overline{x})} & {\Gamma_K} \\
	1 &&&& 1 \\
	& {\pi_1^\et(\overline{Y},\overline{\varphi}(\overline{x}))} & {\pi_1^\et({Y},\overline{\varphi}(\overline{x}))} & {\Gamma_K}
	\arrow[from=1-2, to=1-3]
	\arrow["{\pi(\varphi_{\kbar})}", from=1-2, to=3-2]
	\arrow[from=1-3, to=1-4]
	\arrow["{\pi(\varphi)}", from=1-3, to=3-3]
	\arrow[from=1-4, to=2-5]
	\arrow[from=1-4, to=3-4]
	\arrow[from=2-1, to=1-2]
	\arrow[from=2-1, to=3-2]
	\arrow[from=3-2, to=3-3]
	\arrow[from=3-3, to=3-4]
	\arrow[from=3-4, to=2-5]
\end{tikzcd}\]

We've assumed that $X(K)\neq\varnothing$, so the exact sequence splits and $\pi(\varphi_{\kbar})$ is $\Gamma_K$-equivariant. In particular, we get a map $H^1\big(\Gamma_E,\pixbar\big)\longrightarrow H^1\big(\Gamma_E,\pi_1^\et(\overline{Y},\overline{\varphi}(\overline{x}))\big)$ for every finite extension $E\supset K$. Then we take a colimit over extensions of $K$ and get a map $$\underset{E\supset K}{\varinjlim}\:H^1\big(\Gamma_E,\pixbar\big)\longrightarrow\underset{E\supset K}{\varinjlim}\:H^1\big(\Gamma_E,\pi_1^\et(\overline{Y},\overline{\varphi}(\overline{x}))\big)$$ making the desired diagram commute. 
\end{proof}
\section{The Meat}
\subsection{Kummer Theory Preliminaries}
Fix a perfect field $K$ of positive characteristic. To reconstruct a morphism $X_\et\rightarrow Y_\et$, it turns out that we can replace $Y$ with $\GG_m$. This is nice because it allows us to use Kummer theory. The proof of [\cite{Voe}, \href{https://www.math.ias.edu/vladimir/sites/math.ias.edu.vladimir/files/Etale_topologies_published.pdf\#page=11}{Proposition 3.1}] goes through without modification.
\begin{prop}\label{suff}
    Let $X_0$ and $Y$ be finite type $K$-schemes. Let $X=X_0^\perf$. Assume that for all geometrically connected and affine $U$ that are \'etale over $X$ such that $U(E)\neq\varnothing$ for some finite extension $E\supset K$ and all $\varphi\in\mor_{E_\et}^\bullet(U_{E,\et},(\GG_{m,E})_\et)$, there exists a morphism $\widetilde{\varphi}\in\mor_E(U_E,\GG_{m,E})$ such that $\widetilde{\varphi}(\overline{x})=\overline{\varphi}(\overline{x})$ for all $\overline{x}\in U_E(\kbar)$. Then $$\mor_K(X,Y)\longrightarrow\mor_{K_\et}^\bullet(X_\et,Y_\et)$$ is surjective (and hence bijective).
\end{prop}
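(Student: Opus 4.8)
The plan is to carry out the sequence of target reductions that lets one replace an arbitrary $Y$ by $\GG_m$, using Proposition \ref{2.4} as the reconstruction criterion at the top, and assembling the local pieces with the injectivity of Proposition \ref{inj} and the fpqc descent of Proposition \ref{fpqc}. Fix an admissible $\varphi\in\mor_{K_\et}^\bullet(X_\et,Y_\et)$. By Proposition \ref{2.4} it suffices, for every \'etale $V\longrightarrow Y$, to produce a scheme morphism $\widetilde{\varphi}_V\in\mor_K(\varphi^{-1}(V),V)$ agreeing with $\overline{\varphi_V}$ on $\kbar$-points. Since $\varphi^{-1}(V)$ is \'etale over $X=X_0^\perf$ it is again the perfection of a finite type scheme (\'etale schemes over a perfection descend through the topological invariance of the \'etale site), so reconstructing $\varphi_V$ on geometric points is a problem of exactly the same shape as the original one. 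Thus it is enough to give a uniform method that, for a source $W$ \'etale over $X$ and a finite type target (which I keep calling $Y$), reconstructs the site morphism on geometric points; this method will then apply to each $V$.

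First I would reduce the target to $\GG_m$. Covering $Y$ by affine opens $\{V_\alpha\}$ and pulling back gives an \'etale cover $\{\varphi^{-1}(V_\alpha)\}$ of $W$; reconstructions over the $V_\alpha$ glue to one over $Y$ because on overlaps the two candidate scheme maps agree with $\overline{\varphi}$ on geometric points and hence coincide by Proposition \ref{inj}. This reduces to $Y$ affine, where a closed immersion $Y\hookrightarrow\AAa^n_K$ turns the task into reconstructing a map to $\AAa^n$: the $n$ coordinate projections split this into $n$ problems with target $\AAa^1$, and because $\overline{\varphi}$ already lands in $Y(\kbar)$ and $W$ is reduced, any scheme map to $\AAa^n$ matching $\overline{\varphi}$ on geometric points automatically factors through the closed subscheme $Y$. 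Finally I cover $\AAa^1$ by the two opens $\GG_m=\AAa^1\minus\{0\}$ and $\AAa^1\minus\{1\}\cong\GG_m$; pulling back along the induced site morphism $W_\et\longrightarrow\AAa^1_\et$ and gluing once more via Proposition \ref{inj} reduces us to the case of target $\GG_m$.

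It then remains to reconstruct a site morphism $\psi\colon W_\et\longrightarrow\GG_{m,\et}$ by a $K$-scheme map, where $W$ is affine \'etale over $X$, hence again a perfection of a finite type scheme (covering $W$ by affine \'etale opens and gluing as before). Here the hypothesis enters, but first the source must be made geometrically connected with a rational point over a finite extension. Covering $W$ by connected affine opens reduces to a single connected affine $A$ \'etale over $X$; such an $A$ is geometrically connected over its field of constants $E_0$, the separable closure of $K$ in $\OO(A)$, a finite extension of $K$ over which $A$ is \'etale over $X_{E_0}$. Enlarging $E_0$ to a finite extension $E$ carrying a rational point—permissible since $E$-points of the perfection coincide with $E$-points of its finite type model, every algebraic extension of the perfect field $K$ being perfect—the scheme $A$ becomes geometrically connected affine \'etale over $X_E$ with $A(E)\neq\varnothing$. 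Base changing $\psi|_A$ to $E$ by Proposition \ref{base} and invoking the hypothesis (with base field the finite extension $E$, under which all standing assumptions persist) yields a scheme morphism over $E$ agreeing with $\overline{\psi|_A}$ on geometric points; assembling these over a cover of $W_E$ and descending the result to $K$ by Proposition \ref{fpqc} gives the required $K$-scheme map $W\longrightarrow\GG_m$. Climbing back up the chain—reassembling the $\GG_m$-pieces into a map to $\AAa^1$, the coordinates into a map to $\AAa^n$ factoring through $Y$, and the affine pieces into a map to $Y$—produces $\widetilde{\varphi}_V$, and Proposition \ref{2.4} finishes the argument.

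The hard part will be the base-field bookkeeping at the bottom of this induction. A connected component of a cover of $W$ need not be geometrically connected over $K$, so to apply the hypothesis one is forced to pass to the finite extension over which it becomes geometrically connected and over which it acquires a rational point, run the reconstruction there, and then descend the \emph{assembled} morphism on all of $W_E$ back to $K$ via Proposition \ref{fpqc}. Everything above this level is formal gluing justified by the injectivity of Proposition \ref{inj}; the genuinely delicate point is to arrange the geometric connectedness and rational-point conditions by a single finite extension, keep the local reconstructions compatible on every overlap through that base change, and remove the extension again by descent.
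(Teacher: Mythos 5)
Your proposal is correct and follows essentially the same route as the paper: reduce via Proposition \ref{2.4} to \'etale-local reconstruction on geometric points, reduce the target to the affine case and then to $\AAa^1$ via global functions, cover $\AAa^1$ by the two copies of $\GG_m$, apply the hypothesis after passing to a finite extension with a rational point, and descend with Proposition \ref{fpqc}. The only cosmetic difference is that you package the affine-to-$\AAa^1$ step through a closed immersion $Y\hookrightarrow\AAa^n_K$ and a factoring argument, whereas the paper reconstructs $b_\et\circ\varphi$ for every $b\in\OO(Y)$ and checks that the resulting set map $\OO(Y)\to\OO(X)$ is a ring homomorphism by evaluating on geometric points; the two are interchangeable.
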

\begin{proof}
    By Proposition \ref{2.4}, it suffices to show that, for any admissible $\varphi:X_\et\longrightarrow Y_\et$ and any $U\in Y_\et$, the map $\varphi_U:\varphi^{-1}(U)\longrightarrow U$ coincides with some morphism of schemes on geometric points of $\varphi^{-1}(U)$. For the following argument, we can just consider $X$. Proposition \ref{fpqc} tells us that coincidence over $E$ is enough, and we may pass to finite $E\supset K$ such that $X(E)\neq\varnothing$. For notational simplicity, we will assume that $K=E$. We may further reduce to the case where $X=\spec(A)$ and $Y=\spec(B)$. And we may also assume that $X$ is geometrically connected. 

    Take $\varphi\in\mor_{K_\et}^\bullet(X_\et,Y_\et)$. Consider $b\in B=\mor_K(Y,\AAa^1_K)$ and abusively conflate it with the map it induces on geometric points. We have a map $\overline{b_\et\circ\varphi}=b\circ\overline{\varphi}:X(\kbar)\longrightarrow \AAa^1_K(\kbar)$. Cover $\AAa^1_K$ with the two copies of $\GG_m$ given by $\AAa^1_K\minus\{0\}$ and $\AAa^1_K\minus \{1\}$. For $i=0,1$, let $X_i$ be the open subscheme of $X$ corresponding to the open set $|X|\minus|b_\et\circ \varphi|^{-1}(i)$ (recall that $|b_\et|=|b|$). By assumption, there exist morphisms $\psi_{0}(b):X_0\longrightarrow \GG_{m,K}$ and $\psi_{1}(b):X_1\longrightarrow \GG_{m,K}$ whose induced maps on geometric points agree with $b\circ\overline{\varphi}|_{X_{0}(\kbar)}$ and $b\circ\overline{\varphi}|_{X_{1}(\kbar)}$ respectively. These morphisms of schemes agree on $X_0\cap X_1$ because their induced maps on geometric points agree on $X_0(\kbar)\cap X_1(\kbar)$ (using the fact that $X$ is reduced and $\GG_{m,K}$ is of finite type over $K$). Since $X=X_0\cup X_1$, $\psi_0(b)$ and $\psi_1(b)$ glue along $X_0\cap X_1$ to a morphism $\psi(b):X\longrightarrow\AAa^1_K$ that agrees with $b_\et\circ\varphi$ on geometric points. We thus get a map of sets \begin{align*}
        \psi:B&\longrightarrow \mor_K(X,\AAa^1_K)=A\\
        b&\longmapsto \psi(b)
    \end{align*}
    We would like to show that $\psi$ is actually a ring homomorphism. On geometric points, $\psi(b+b')$ is equal to $(b+b')\circ \overline{\varphi}$. But this is the same as $b\circ\overline{\varphi}+b'\circ\overline{\varphi}\in\mor(X(\kbar),\AAa^1_K(\kbar))$ where the ring structure on $\mor(X(\kbar),\AAa^1_K(\kbar))$ is induced by the ring structure on $\AAa^1_K(\kbar)=\kbar$. By the uniqueness of $\psi$, this implies that $\psi({b+b'})=\psi(b)+\psi({b'})$. The same argument works for $bb'$. So $\spec(\psi)$ is a map of schemes. We know that $\psi(b)$ coincides with $b_\et\circ\varphi$ on geometric points for all $b\in B=\mor_K(Y,\AAa_K^1)$. So $\spec(\psi)$ coincides with $\varphi$ on $X(\kbar)$ (noting that $Y$ is determined by its global sections).
\end{proof}
From now on, we let $X_0$ be a geometrically connected, affine scheme of finite type over $K$ with $X_0(K)\neq\varnothing$ and let $X=X_0^\perf$. This is all we need to consider because of the above proposition. The following lemma is well known.
\begin{lem}\label{perf}
    If $f:U\longrightarrow X$ is \'etale and $X$ is perfect, $U$ is perfect.
\end{lem}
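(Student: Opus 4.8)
The plan is to use the naturality of the absolute Frobenius together with the fact that étale morphisms in characteristic $p$ are compatible with Frobenius. Recall that $X$ being perfect means precisely that $\Phi_X$ is an isomorphism, and the goal is to deduce that $\Phi_U$ is an isomorphism. Since the absolute Frobenius is a natural endomorphism of the identity functor on $\FF_p$-schemes, the morphism $f\colon U\longrightarrow X$ fits into the identity
\[
 f\circ\Phi_U=\Phi_X\circ f .
\]

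First I would show that $\Phi_U$ is étale. As $\Phi_X$ is an isomorphism and $f$ is étale, the composite $\Phi_X\circ f$ is étale; by the displayed identity this composite equals $f\circ\Phi_U$, so the map $U\xrightarrow{\ f\circ\Phi_U\ }X$ is étale. Now regard $\Phi_U$ as an $X$-morphism from $U$ (with structure map $f\circ\Phi_U$) to $U$ (with structure map $f$): the source is étale over $X$ and the target is unramified over $X$, so the cancellation property for étale morphisms (\cite{Stacks}) forces $\Phi_U$ to be étale.

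Next I would check that $\Phi_U$ is an isomorphism by the same mechanism used in Lemma \ref{etalemono}. The absolute Frobenius is the identity on the underlying topological space, hence surjective, and it is radicial (universally injective, with purely inseparable residue field extensions). Being étale it is unramified, so its diagonal is an open immersion; radicial means the diagonal is also surjective [\cite{Stacks}, \href{https://stacks.math.columbia.edu/tag/01S4}{Tag 01S4}], hence an isomorphism, so $\Phi_U$ is a monomorphism. An étale monomorphism is an open immersion [\cite{Stacks}, \href{https://stacks.math.columbia.edu/tag/025G}{Tag 025G}], and a surjective open immersion is an isomorphism. Thus $\Phi_U$ is an isomorphism and $U$ is perfect.

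I expect the only step with genuine content to be the cancellation argument showing $\Phi_U$ is étale; the rest is formal, reusing the radicial/open-immersion facts already invoked in the proof of Proposition \ref{2.4}. An equally short alternative avoids cancellation via the relative Frobenius $F_{U/X}\colon U\longrightarrow U\times_{X,\Phi_X}X$: for étale $f$ this map is an isomorphism, while the projection $U\times_{X,\Phi_X}X\longrightarrow U$ is the base change of the isomorphism $\Phi_X$ along $f$ and so is itself an isomorphism; since $\Phi_U$ is their composite, it is an isomorphism. Either way the moral is that an étale cover cannot obstruct perfectness once the base is perfect.
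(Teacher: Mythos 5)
Your proof is correct and essentially matches the paper's: the paper factors $\Phi_U=\operatorname{pr}_U\circ\Phi_{U/X}$, shows the relative Frobenius is \'etale by cancellation against the \'etale projection $\operatorname{pr}_X$, and concludes via ``\'etale $+$ universal homeomorphism $\Rightarrow$ isomorphism'' ([\cite{Stacks}, Tags 0CCB and 025G]) --- which is precisely the relative-Frobenius alternative you sketch at the end. Your main route, applying cancellation directly to $\Phi_U$ using $f\circ\Phi_U=\Phi_X\circ f$ and then running the radicial/monomorphism argument of Lemma \ref{etalemono}, is a cosmetic variant built from the same two ingredients, and it goes through without issue.
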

\begin{proof}
    Let $\Phi_{(-)}$ denote the absolute Frobenius on $(-)$. Let $U^{(p/X)}=U\times_{\Phi_X}X$ and let $\Phi_{U/X}:U\longrightarrow U^{(p/X)}$ denote the relative Frobenius of $U$ over $X$ [\cite{Stacks}, \href{https://stacks.math.columbia.edu/tag/0CC9}{Tag 0CC9}]. We see that $\operatorname{pr}_U:U^{(p/X)}\longrightarrow U$ is an isomorphism because $\Phi_X$ is. The composition $U\overset{\Phi_{U/X}}{\longrightarrow} U^{(p/X)}\overset{\operatorname{pr}_X}{\longrightarrow} X$ is \'etale because it is equal to $f$. And $\operatorname{pr}_X$ is \'etale. So $\Phi_{U/X}$ is \'etale by cancellation. By [\cite{Stacks}, \href{https://stacks.math.columbia.edu/tag/0CCB}{Tag 0CCB}], $\Phi_{U/X}$ is a universal homeomorphism. It is also \'etale, so it is an isomorphism by [\cite{Stacks}, \href{https://stacks.math.columbia.edu/tag/025G}{Tag 025G}]. We conclude that $\Phi_U=\operatorname{pr}_U\circ\:\Phi_{U/X}$ is an isomorphism.
\end{proof}
We have the Kummer short exact sequence on the \'etale site of $\xbar$ 
$$1\longrightarrow \mu_{n,\xbar}\longrightarrow \GG_{m,\xbar}\overset{t\,\mapsto t^n}{\longrightarrow}\GG_{m,\xbar}\longrightarrow 1$$
for all $n$. In general, we only get it for $n$ prime-to$-p$, but Frobenius is an isomorphism, so by Lemma \ref{perf}, every section is \'etale locally a $p^m$-th root for all $m$. The long exact sequence in \'etale cohomology gives us
$$1\longrightarrow \OO^*(\xbar)/\OO^*(\xbar)^n\overset{\chi_n}{\longrightarrow} H^1_\et(\xbar,\mu_n)\longrightarrow\pic(\overline{X})[n]\longrightarrow 1$$
where $\pic(\overline{X})[n]$ denotes $n$-torsion in the Picard group. Taking the limit over $n$, we have $$1\longrightarrow\ohat\overset{\chi}{\longrightarrow}\hone\longrightarrow \widehat{T}(\pic(\xbar))$$
Note that $\OO^*(\xbar)/\OO^*(\xbar)^{p^m}=0$ for all $m$. So $\ohat=\ohat_\ptp$ (where $\ptp$ denotes taking the prime-to$-p$ limit) and $H^1_\et(\xbar,\mu_{p^m})\cong\pic(\overline{X})[p^m]$ for all $m$. The following lemma is well known (see e.g., [\cite{BS}, \href{https://arxiv.org/pdf/1507.06490\#page=11}{Lemma 3.5}]):
\begin{lem}\label{picperf}
    Let $X$ be a qcqs $\FF_p$-scheme. Pullback along $X^\perf\rightarrow X$ induces an isomorphism $\pic(X^\perf)\cong\pic(X)[1/p]$.
\end{lem}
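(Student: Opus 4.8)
The plan is to realize $X^\perf$ as a cofiltered limit of copies of $X$ along the absolute Frobenius and then transport this limit through the Picard functor. Concretely, by definition $X^\perf=\plim{\Phi} X$, where the inverse system is $\cdots\xrightarrow{\Phi} X\xrightarrow{\Phi}X$ with every transition map the absolute Frobenius $\Phi$. Since $\Phi$ is integral (indeed a universal homeomorphism), the transition maps are affine, so this limit exists in the category of schemes and $X^\perf$ is qcqs because $X$ is. I would then invoke the standard fact that for a cofiltered limit of qcqs schemes along affine transition morphisms, the $2$-colimit of the categories of finitely presented quasi-coherent modules recovers that of the limit; restricting to invertible modules yields
$$
\pic(X^\perf)\;\cong\;\colim\Big(\pic(X)\xrightarrow{\ \Phi^*\ }\pic(X)\xrightarrow{\ \Phi^*\ }\cdots\Big),
$$
the colimit being taken along the pullback maps $\Phi^*$ on Picard groups. (On affines this is just the statement that $\pic$ commutes with the filtered colimit of rings $A^{p^{-\infty}}=\colim_\Phi A$.)

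The second step is to identify the transition map $\Phi^*\colon\pic(X)\to\pic(X)$. Because the absolute Frobenius is the identity on the underlying topological space and the $p$-power map on functions, it acts on the transition cocycles of a line bundle $\mathscr L$—which are units—by raising them to the $p$-th power. Hence $\Phi^*\mathscr L\cong\mathscr L^{\otimes p}$, i.e.\ $\Phi^*$ is multiplication by $p$ on $\pic(X)$.

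Combining the two steps, the colimit above is the colimit of $\pic(X)$ along multiplication by $p$, which is exactly the localization
$$
\pic(X)\otimes_{\ZZ}\ZZ[1/p]\;=\;\pic(X)[1/p].
$$
One should check compatibility with the structure map, so that the isomorphism is genuinely pullback along $X^\perf\to X$: the canonical map $\pic(X)\to\pic(X^\perf)$ is the one into the colimit, and it becomes an isomorphism after inverting $p$. I expect the only real technical point to be the first step—justifying that $\pic$ commutes with this cofiltered limit—which is where the hypotheses that $X$ is qcqs and that the transition maps are affine are used; everything else is formal. The computation of $\Phi^*$ as $\times p$ and the identification of the telescoping colimit with $\pic(X)[1/p]$ are routine.
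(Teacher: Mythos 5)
Your proof is correct and is essentially the paper's argument, only written out in more detail: the paper likewise writes $X^\perf=\lim_\Phi X$, passes the limit through $\pic$ to get a colimit along Frobenius pullbacks, and notes that $\Phi^*$ raises each line bundle to its $p$-th power, so the colimit is $\pic(X)[1/p]$. The extra care you take in justifying that $\pic$ commutes with the cofiltered limit (qcqs, affine transition maps) is exactly the right technical point to flag.
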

\begin{proof}
    $X^\perf= \lim_\Phi X$ so $\pic(X^\perf)\cong \colim\pic(X)$ where the colimit is taken over pullbacks of Frobenius which raise each line bundle to its $p$-th power.
\end{proof} So $\pic(\overline{X})[p^m]=H^1_\et(\xbar,\mu_{p^m})=0$ for all $m$ because $p$ is invertible in $\pic(\xbar)=\pic(\xbar^\perf)$.
Therefore $\hone$ is the same as its prime-to$-p$ part.

Let $\otop(\overline{X})=\mor_{\overline{K}_\et}(\overline{X}_{\et},(\GG_{m,\overline{K}})_\et)$. Fix a geometric point $\overline{x}$ of $X$. By the functoriality of $\pi_1^\et$, we get a map $$\otop(\overline{X})\longrightarrow\Hom_\cts(\pi_1^\et(\overline{X},\overline{x}), \pigmbar)$$ In the case where $K$ is an AFG field of characteristic $0$ and $X$ is a finite type $K$-scheme, we have $\pigmbar\cong\widehat\ZZ$, so we get an isomorphism $$\Hom_\cts(\pixbar,\pigmbar)\overset{\sim}{\longrightarrow}\hone$$ and the composite
$$\otop(X)\longrightarrow\otop(\overline{X})\longrightarrow\Hom_\cts(\pixbar,\pigmbar)\overset{\sim}{\longrightarrow}\hone$$
is our $\chi_\top$ where the map $\otop(X)\longrightarrow\otop(\xbar)$ comes from Proposition \ref{base}. The image of $\chi_\top$ lies in $\hone^{\Gamma_K}$ (where the action of $\Gamma_K$ comes from the splitting of the homotopy exact sequence).

In characteristic $p$, $\pigmbar$ is a bit unwieldy because of Artin-Schreier covers. So if we want an analogue of $\chi_\top$, we should pass to the tame fundamental group $\pigmtame\cong\widehat\ZZ_\ptp\cong\prod_{\ell\neq p}\ZZ_\ell$. 
The tame fundamental group is a quotient of $\pigmbar$ which sits in an exact sequence
$$1\longrightarrow \pigmbar^{w}\longrightarrow\pigmbar\longrightarrow\pigmtame\longrightarrow1$$
where $\pigmbar^{w}$---the wild fundamental group---is pro$-p$. In this case, the tame fundamental group is the same as the prime-to$-p$ fundamental group. The exact sequence splits by a profinite version of the Schur-Zassenhaus Theorem [\cite{RZ}, \href{https://link.springer.com/book/10.1007/978-3-662-04097-3}{Theorem 2.3.15}]. The tame fundamental group is the ``Kummer part" of $\pigmbar$ ($t\mapsto t^n$, $p\nmid n$) while $\pigmbar^{w}$ keeps track of Artin-Schreier covers ($t\mapsto t^n-t-f(t)$, $p\nmid\operatorname{deg}(f)$). We want to isolate the Kummer part of $\pigmbar$ because the image of the boundary map $\chi:\ohat\longrightarrow\hone$ comprises prime-to$-p$ projective limits of Kummer classes. Recall that the Kummer classes in $H^1_\et(\xbar,\mu_n)$ are the classes associated to the \'etale $\mu_n$-torsors that are given by extracting an $n$-th root of a unit; so if $\xbar=\spec(A)$, such torsors are given by $\spec(A[T]/T^n-u)$, $u\in\OO^*(\xbar)$.

Given this, we will define $\chi_\top$ as the composite
$$\otop(X)\longrightarrow\otop(\overline{X})\longrightarrow\Hom_\cts(\pixbar,\pigmtame)\overset{\sim}{\longrightarrow} \hone$$
where the first map comes from Proposition \ref{base}, and the second map comes from the functoriality of $\pi_1^\et$ along with the composition of any map $\pixbar\rightarrow\pigmbar$ with the quotient $\pigmbar\rightarrow\pigmtame$. With this we get an analogue of [\cite{Voe}, \href{https://www.math.ias.edu/vladimir/sites/math.ias.edu.vladimir/files/Etale_topologies_published.pdf\#page=12}{Proposition 3.2}]:
\begin{lem}\label{commdiag}
    
We have a commutative diagram
$$\begin{tikzcd}
                               & \ohat \arrow[rd, "\chi"]       &        \\
\OO^*(X) \arrow[ru] \arrow[rd] &                                & \hone \\
                               & \otop(X) \arrow[ru, "\chi_\top"'] &       
\end{tikzcd}$$
\end{lem}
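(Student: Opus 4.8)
The plan is to unwind both composites $\OO^*(X)\to\hone$ and to check that, level by level, they produce the same compatible system of Kummer torsors. First I would observe that both legs factor through the pullback map $\OO^*(X)\to\OO^*(\xbar)$, $u\mapsto\bar u$. The upper leg does so by the very definition of $\ohat=\varprojlim_n\OO^*(\xbar)/\OO^*(\xbar)^n$. The lower leg does so because the image of $u$ in $\otop(X)$ is $(f_u)_\et$, where $f_u:X\to\GG_m$ is the morphism of schemes classifying $u$ (the coordinate $t$ pulls back to $u$), and under the base-change functor of Proposition \ref{base} this maps to $(f_{\bar u})_\et\in\otop(\xbar)$, which is also the image of $\bar u$ under the analogous construction over $\kbar$. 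Hence it suffices to fix $\bar u\in\OO^*(\xbar)$ and to prove that $\chi$ of the image of $\bar u$ in $\ohat$ agrees with the class that $\chi_\top$ attaches to $(f_{\bar u})_\et$.

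Next I would record the explicit shape of each side. On the algebraic side, $\chi(\bar u)$ is the prime-to-$p$ limit of the Kummer coboundary classes $\chi_n(\bar u)\in H^1_\et(\xbar,\mu_n)$, and the standard description of the coboundary in the Kummer sequence identifies $\chi_n(\bar u)$ with the class of the $\mu_n$-torsor $T_n(\bar u):=\spec\big(A[T]/(T^n-\bar u)\big)$, where $\xbar=\spec(A)$. On the topological side, I would use the canonical identification $\pigmtame\cong\varprojlim_{p\nmid n}\mu_n=\widehat{\ZZ}(1)$ furnished by the tower of Kummer covers $[n]:\GG_m\to\GG_m$, $s\mapsto s^n$ (whose deck group is $\mu_n$ acting by $s\mapsto\zeta s$). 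Under this identification, the isomorphism $\Hom_\cts(\pixbar,\pigmtame)\overset{\sim}{\to}\hone$ is the one sending $\rho$ to the class whose level-$n$ component is the image of $\rho_n:\pixbar\to\mu_n$ in $H^1_\et(\xbar,\mu_n)=\Hom_\cts(\pixbar,\mu_n)$; so the class attached to $(f_{\bar u})_\et$ is, at level $n$, the $\mu_n$-cover of $\xbar$ obtained by pulling the Kummer cover $[n]$ back along $f_{\bar u}$.

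The heart of the argument is then a single fiber-product computation. The pullback $\xbar\times_{\GG_m}\GG_m$ of $f_{\bar u}$ and $[n]$ is $\spec\big(A\otimes_{K[t^{\pm1}]}K[s^{\pm1}]\big)$, where $A$ is a $K[t^{\pm1}]$-algebra via $t\mapsto\bar u$ and $K[s^{\pm1}]$ via $t\mapsto s^n$; this equals $\spec\big(A[s^{\pm1}]/(s^n-\bar u)\big)$ with its $\mu_n$-action $s\mapsto\zeta s$, which is canonically $T_n(\bar u)$ together with its torsor structure. Since both constructions are functorial in $n$ for $n\mid m$ (the coboundaries via the surjections $\mu_m\to\mu_n$, the cover pullbacks via the tower $\GG_m\overset{[m/n]}{\to}\GG_m$), the level-$n$ identifications are compatible as $n$ ranges over integers prime to $p$, so they assemble to the desired equality in $\hone$ between the two images of $u$, which is the commutativity of the diagram.

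The step I expect to be the main obstacle is the bookkeeping in the middle paragraph: pinning down the three identifications---the Kummer coboundary, the dictionary $H^1_\et(\xbar,\mu_n)=\Hom_\cts(\pixbar,\mu_n)$, and the canonical isomorphism $\pigmtame\cong\widehat{\ZZ}(1)$---so that they are mutually compatible and $T_n(\bar u)$ is matched with the correct orientation of its $\mu_n$-action, with no stray sign or inverse. Once these normalizations are fixed consistently, the fiber-product computation is immediate and the passage to the limit over $n$ is purely formal.
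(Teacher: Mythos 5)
Your proposal is correct and follows essentially the same route as the paper: both arguments reduce to a level-$n$ comparison (for $n$ prime to $p$) identifying the Kummer coboundary class of a unit $u$ with the torsor $\spec(A[T]/(T^n-u))$ over $\xbar$, and identifying the pullback of the degree-$n$ Kummer cover of $\GG_m$ along $u_\et$ with that same torsor via the fiber-product computation. Your write-up is more explicit about the normalizations and the compatibility in $n$, but the underlying argument is the one the paper gives.
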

\begin{proof}
    Let $X=\spec(A)$. We would like to show that at finite level $n$ prime-to$-p$, the bottom composition sends a unit $u$ to the class corresponding to the \'etale $\mu_n$-torsor $(A\otimes_K\kbar)[T]/(T^n-u)$ (which is invariant under the Galois action because $u\in\OO^*(X)$). After tensoring up to $\kbar$, finite level $n$ sees only the degree$-n$ Kummer cover $\GG_m\overset{x\mapsto x^n}{\longrightarrow}\GG_m$. The degree$-n$ Kummer cover gets sent to $A[T]/(T^n-u)$ along $u_\et$. And the induced map on \'etale fundamental groups gives the desired torsor (it is given by pullback).
\end{proof}

By assumption, $X_0$ is of finite type over $K$ and has a $K$-rational point. So we have a split exact sequence 
$$1\longrightarrow\pi_1^\et(\overline{X_0},\overline{x})\longrightarrow\pi_1^\et({X_0},\overline{x})\longrightarrow\Gamma_K\longrightarrow1$$
By topological invariance, we get a split exact sequence $$1\longrightarrow\pixbar\longrightarrow\pix\longrightarrow\Gamma_K\longrightarrow1$$
which gives an action of $\Gamma_K$ on $\pixbar$. There is also an action of $\Gamma_K$ on $\pigmbar^t$ inherited from the action of $\Gamma_K$ on $\pigmbar$ because $\pigmbar^t$ is a normal subgroup of $\pigmbar$. 
As in characteristic $0$, we have $\operatorname{Im}\chi_\top\subset\hone^{\Gamma_K}$ (as mentioned in the proof of Lemma \ref{commdiag} above).
\begin{rmk}
    Evidently everything we have done in this subsection works if $X$ is a geometrically connected scheme of finite type over $K$ as long as we make it explicit that we are taking prime-to$-p$ limits. We don't specify prime-to-$p$ because perfection kills the $p$ part.
\end{rmk}
\subsection{The Main Theorem} We can now obtain characteristic $p$ analogues of Voevodsky's key propositions.
\begin{prop}\label{2.10}
    Any class $\xi\in\hone^{\Gamma_K}$ gives rise to a map $$f_\xi:X(\overline{K})\longrightarrow \underset{E\supset K}{\varinjlim}\:\underset{p\,\nmid\,n}{\varprojlim}\: E^\times/\:E^{\times n}$$
    where the colimit is taken over finite extensions $E$ of $K$. Let $$i:\overline{K}^\times\longrightarrow\underset{E\supset K}{\varinjlim}\:\underset{p\,\nmid\,n}{\varprojlim}\: E^\times/\:E^{\times n}$$
    be the natural map. Then for any $\overline{y}\in X(\overline{K})$ and $\varphi\in \otop(X)$ such that $\overline{\varphi}(\overline x)=1$ we have $$i(\overline{\varphi}(\overline y))=f_{\chi_\top(\varphi)}(\overline{y})$$
    where $\overline{\varphi}:X(\overline{K})\longrightarrow\GG_m(\overline{K})=\overline{K}^\times$ is the map on geometric points corresponding to $\varphi$.
\end{prop}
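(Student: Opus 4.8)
The plan is to construct $f_\xi$ from the machinery of Proposition \ref{perfsq} and then, for $\xi=\chi_\top(\varphi)$, to identify the output of that construction with $i\circ\overline\varphi$ by unwinding the definition of $\chi_\top$. Two preliminary identifications make everything run. First, since $\mu_n$ is geometrically constant on $\xbar$ we have $H^1_\et(\xbar,\mu_n)=\Hom_\cts(\pixbar,\mu_n)$, and, as noted in the discussion preceding Lemma \ref{commdiag}, $\hone$ agrees with its prime-to-$p$ part; passing to the limit identifies $\xi\in\hone^{\Gamma_K}$ with a continuous homomorphism $\xi\colon\pixbar\longrightarrow\widehat{\ZZ}(1)_\ptp$ that is $\Gamma_K$-equivariant for the action coming from the split homotopy exact sequence (the splitting uses $X_0(K)\neq\varnothing$). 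Second, Kummer theory over each finite $E\supset K$ gives $H^1\big(\Gamma_E,\widehat{\ZZ}(1)_\ptp\big)=\varprojlim_{p\nmid n}H^1(\Gamma_E,\mu_n)=\varprojlim_{p\nmid n}E^\times/E^{\times n}$.

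Given these, I would define $f_\xi$ as the composite
$$X(\overline{K})\longrightarrow\varinjlim_{E\supset K}H^1\big(\Gamma_E,\pixbar\big)\xrightarrow{\ \xi_*\ }\varinjlim_{E\supset K}H^1\big(\Gamma_E,\widehat{\ZZ}(1)_\ptp\big)=\varinjlim_{E\supset K}\varprojlim_{p\nmid n}E^\times/E^{\times n},$$
where the first arrow is the canonical map sending a geometric point to the class of the section of the homotopy exact sequence it determines (this is the left vertical map appearing in Proposition \ref{perfsq}, which does not depend on $\varphi$), and $\xi_*$ is induced on (nonabelian) group cohomology by $\xi$. The map $\xi_*$ is defined because $\xi$ is $\Gamma_K$-equivariant, hence $\Gamma_E$-equivariant for every $E\supset K$; compatibility of $\xi_*$ with the transition maps in $n$ and $E$ is immediate from functoriality of $H^1(\Gamma_E,-)$ and of the Kummer identification, so $f_\xi$ is well defined. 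This handles the existence claim for an arbitrary class $\xi$.

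For the compatibility, take $\xi=\chi_\top(\varphi)$ with $\overline\varphi(\overline x)=1$. By the definition of $\chi_\top$, the homomorphism $\xi$ factors as $\pixbar\xrightarrow{\ \pi(\varphi)\ }\pigmbar\twoheadrightarrow\pigmtame\cong\widehat{\ZZ}(1)_\ptp$, so $\xi_*=q_*\circ\pi(\varphi)_*$ with $q$ the tame quotient. Now I would invoke Proposition \ref{perfsq} with $Y=\GG_m$ and base point $\overline\varphi(\overline x)=1$: the commuting square there says precisely that $\pi(\varphi)_*$ carries the section class of $\overline y$ to the section class of $\overline\varphi(\overline y)$ inside $\varinjlim_{E\supset K}H^1(\Gamma_E,\pigmbar)$. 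Consequently $f_{\chi_\top(\varphi)}(\overline y)$ is the image under $q_*$, followed by the Kummer identification, of the section class attached to the point $\overline\varphi(\overline y)\in\GG_m(\overline K)$.

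The remaining step, which I expect to be the crux, is to match this element with $i(\overline\varphi(\overline y))$. Concretely one must verify that for $\GG_m$ the recipe ``point $a\in E^\times=\GG_m(E)\mapsto$ its section class $\mapsto$ tame quotient $\mapsto$ Kummer identification'' returns the class of $a$ in $\varprojlim_{p\nmid n}E^\times/E^{\times n}$, i.e. recovers the natural map $i$. This is the standard fact that the tame section associated to a rational point of $\GG_m$ is its Kummer class, which I would check at each finite level $p\nmid n$ by pairing the section against the degree-$n$ Kummer cover $\GG_m\xrightarrow{\,t\mapsto t^n\,}\GG_m$ and comparing with the Kummer pairing on $E^\times/E^{\times n}$, exactly as in the proof of Lemma \ref{commdiag}. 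The hypothesis $\overline\varphi(\overline x)=1$ is what guarantees the section is taken relative to the base point $1$, so that the resulting class is the absolute Kummer class of $\overline\varphi(\overline y)$ rather than that of a ratio; with this in place the three displays combine to give $i(\overline\varphi(\overline y))=f_{\chi_\top(\varphi)}(\overline y)$.
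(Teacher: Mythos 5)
Your proposal is correct and follows essentially the same route as the paper: $f_\xi$ is built by composing the section-class map $X(\overline K)\to\varinjlim_E H^1(\Gamma_E,\pixbar)$ with the map induced by $\xi$ viewed as a $\Gamma_K$-equivariant homomorphism to the tame/prime-to-$p$ quotient, the compatibility is extracted from the square of Proposition \ref{perfsq} with $Y=\GG_m$, and the remaining identification of the composite $E^\times\to H^1(\Gamma_E,\pigmtame)\cong\varprojlim_{p\nmid n}E^\times/E^{\times n}$ with $i$ is exactly the step the paper settles by citing [\cite{Stix}, Corollary 71]. Your plan to verify that last point at each finite level via the Kummer pairing is a reasonable substitute for that citation.
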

\begin{proof}
    Consider the Kummer sequence for $\kbar^\times$
    $$1\longrightarrow\mu_n\longrightarrow\kbar^\times\longrightarrow\kbar^\times\longrightarrow1$$
    which exists for all $n$. Taking the long exact sequence in Galois cohomology, we get $$1\longrightarrow (\kbar^\times)^{\Gamma_K}/(\kbar^{\times n})^{\Gamma_K}\longrightarrow H^1(\Gamma_K,\mu_n)\longrightarrow H^1(\Gamma_K,\kbar^\times)\longrightarrow1$$
  We have $H^1(\Gamma_K,\kbar^\times)=0$ by Hilbert 90 and $(\kbar^\times)^{\Gamma_K}=K^\times$. Hence, $H^1(\Gamma_K,\mu_n)\cong K^\times/K^{\times n}$ and we conclude that $$H^1(\Gamma_K,\pigmtame)\cong H^1(\Gamma_K,\widehat{\ZZ}_{\neq p})\cong \underset{p\,\nmid\,n}{\varprojlim}\: K^\times/\:K^{\times n}\cong H^1(\Gamma_K,\widehat{\ZZ})$$
     where the last isomorphism comes from the fact that $H^1(\Gamma_K,\mu_{p^n})=0$ for all $n$. A class $\xi\in H^1_\et(\overline{X},\widehat{\ZZ}(1))^{\Gamma_K}$ corresponds to a (continuous) $\Gamma_K$-equivariant map $\pixbar\longrightarrow\pigmtame$. So we get a map $$H^1(\Gamma_K,\pixbar)\longrightarrow H^1(\Gamma_K,\pigmtame)$$ Composing with the map $$X(K)\longrightarrow H^1(\Gamma_K,\pixbar)$$ coming from sections of the homotopy exact sequence that are induced by rational points and passing to the colimit over finite extensions $E\supset K$ gives us $f_\xi$.
    By Proposition \ref{perfsq}, we have a commutative diagram $$\begin{tikzcd}
X(\overline{K}) \arrow[rr, "\overline{\varphi}"] \arrow[d]               &  & \overline{K}^\times \arrow[d] \\
{\underset{E\supset K}{\varinjlim}\:H^1(\Gamma_E,\pi_1^\et(\overline{X},\overline{x}))} \arrow[rr] &  & {\underset{E\supset K}{\varinjlim}\:H^1(\Gamma_E,\pi_1^\et(\GG_{m,\overline{K}},1))} 
\end{tikzcd}$$
By composing with  $$ \underset{E\supset K}{\varinjlim}\:H^1(\Gamma_E,\pi_1^\et(\GG_{m,\overline{K}},1))\longrightarrow \underset{E\supset K}{\varinjlim}\:H^1\big(\Gamma_E,\pi_1^\et(\GG_{m,\overline{K}},1)^{t}\big)\cong\underset{E\supset K}{\varinjlim}\:\underset{p\,\nmid\,n}{\varprojlim}\: E^\times/\:E^{\times n}$$
we get a commutative diagram
$$\begin{tikzcd}
X(\overline{K}) \arrow[rr,"\overline{\varphi}"] \arrow[d]               &  & \overline{K}^\times \arrow[d] \\
{\underset{E\supset K}{\varinjlim}\:H^1\big(\Gamma_E,\pixbar\big)} \arrow[rr] &  & {\underset{E\supset K}{\varinjlim}\:\underset{p\,\nmid\,n}{\varprojlim}\: E^\times/\:E^{\times n}} 
\end{tikzcd}$$
The fact that the composite $$\kbar^\times\longrightarrow \underset{E\supset K}{\varinjlim}\:H^1(\Gamma_E,\pi_1^\et(\GG_{m,\overline{K}},1))\longrightarrow \underset{E\supset K}{\varinjlim}\:H^1\big(\Gamma_E,\pi_1^\et(\GG_{m,\overline{K}},1)^{t}\big)\cong\underset{E\supset K}{\varinjlim}\:\underset{p\,\nmid\,n}{\varprojlim}\: E^\times/\:E^{\times n}$$ is equal to $i$ follows from [\cite{Stix}, \href{https://link.springer.com/book/10.1007/978-3-642-30674-7}{Corollary 71}] after restricting to prime-to-$p$ parts. In particular, the map $E^\times\longrightarrow H^1(\Gamma_E,\pigmtame)$ sends $u$ to the prime-to$-p$ projective limit of the mod$-n$ Kummer torsors given by extracting an $n$-th root of $u$. Then $$H^1(\Gamma_E,\pigmtame)\overset{\sim}{\longrightarrow} \underset{p\,\nmid\,n}{\varprojlim}\: E^\times/\:E^{\times n}$$ sends this profinite Kummer torsor to the prime-to$-p$ projective limit of $u\operatorname{mod}E^{\times n}$. We conclude that $i(\overline\varphi(\overline{x}))=f_{\chi_\top(\varphi)}(\overline{x})$ from the second diagram. 
\end{proof}
The following lemma guarantees that our main argument works \'etale-locally on $X$.
\begin{lem}\label{2.13}
    Let $X=X_0^\perf$ for $X_0$ a $K$-scheme. Let $U\longrightarrow X$ be \'etale. Then $U\cong U_0^\perf$ for some $U_0$ \'etale over $X_0$.
\end{lem}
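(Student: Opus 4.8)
The plan is to combine topological invariance of the \'etale site with the fact that perfection commutes with \'etale base change. The projection $X=X_0^\perf\longrightarrow X_0$ is a universal homeomorphism: the absolute Frobenius $\Phi_{X_0}$ is a universal homeomorphism by [\cite{Stacks}, \href{https://stacks.math.columbia.edu/tag/0CCB}{Tag 0CCB}], and the perfection is the inverse limit of the Frobenius system along affine transition maps, so the projection to $X_0$ is again a universal homeomorphism. Hence, by topological invariance of the \'etale site [\cite{Stacks}, \href{https://stacks.math.columbia.edu/tag/04DY}{Tag 04DY}], base change along $X_0^\perf\longrightarrow X_0$ is an equivalence from the category of schemes \'etale over $X_0$ to the category of schemes \'etale over $X$. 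In particular our \'etale $U\longrightarrow X$ is, up to isomorphism, of the form $U\cong U_0\times_{X_0}X_0^\perf$ for some \'etale $U_0\longrightarrow X_0$, and this $U_0$ is the one we seek. It remains to identify $U_0\times_{X_0}X_0^\perf$ with the perfection $U_0^\perf$.

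For this I would first recall from the proof of Lemma \ref{perf} that the relative Frobenius $\Phi_{U_0/X_0}\colon U_0\longrightarrow U_0^{(p/X_0)}$ of the \'etale morphism $U_0\longrightarrow X_0$ is an isomorphism. Writing $X_0^\perf=\lim\big(\cdots\xrightarrow{\Phi_{X_0}}X_0\xrightarrow{\Phi_{X_0}}X_0\big)$ and using that fiber products commute with this cofiltered limit, base change to the zeroth copy of $X_0$ gives
$$U_0\times_{X_0}X_0^\perf\cong\lim_n\left(U_0\times_{X_0,\,\Phi_{X_0}^n}X_0\right)=\lim_n U_0^{(p^n/X_0)}.$$
The iterated relative Frobenius supplies compatible isomorphisms $U_0\overset{\sim}{\longrightarrow}U_0^{(p^n/X_0)}$, under which the transition maps of the limit (base changes of $\Phi_{X_0}$) become the absolute Frobenius $\Phi_{U_0}$, via the factorization $\Phi_{U_0}=\operatorname{pr}_{U_0}\circ\Phi_{U_0/X_0}$. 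Therefore $U_0\times_{X_0}X_0^\perf\cong\lim_{\Phi_{U_0}}U_0=U_0^\perf$, completing the proof.

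The main point requiring care is the bookkeeping of transition maps in the last step: one must verify that identifying each Frobenius twist $U_0^{(p^n/X_0)}$ with $U_0$ through the relative Frobenius really converts the structure maps of the limit into the \emph{absolute} Frobenius on $U_0$, and not some other self-map. This is precisely the content of the factorization $\Phi_{U_0}=\operatorname{pr}_{U_0}\circ\Phi_{U_0/X_0}$ together with the commutation of Frobenius with all morphisms, so once the relevant diagrams are drawn no genuine obstacle arises; everything else is a formal consequence of topological invariance and the commutation of limits with base change.
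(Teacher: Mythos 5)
Your proof is correct and follows essentially the same route as the paper: both arguments use topological invariance of the \'etale site to produce an \'etale $U_0\longrightarrow X_0$ with $U\cong U_0\times_{X_0}X$, and both reduce the identification of this fiber product with $U_0^\perf$ to the fact (extracted from the proof of Lemma \ref{perf}) that the relative Frobenius of an \'etale morphism is an isomorphism. The only cosmetic difference is in the last step: the paper first observes that $U_0\times_{X_0}X$ is perfect because it is \'etale over the perfect scheme $X$ and then commutes perfection past the fiber product, whereas you unwind the fiber product directly as the limit of Frobenius twists $U_0^{(p^n/X_0)}$ and track the transition maps by hand; both computations are valid.
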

\begin{proof} 
Topological invariance tells us that pullback along $X\longrightarrow X_0$ induces an equivalence $ X_{0,\et}\simeq X_\et$. Let $U\longrightarrow X$ be \'etale. By essential surjectivity, there exists an \'etale $U_0\longrightarrow X_0$ such that  \begin{align*}
    U&\cong U_0\times_{X_0}X\\&\cong(U_0\times_{X_0}X)^\perf\\&\cong U_0^\perf\times_{X}X\\
    &\cong U_0^\perf
\end{align*} where the second isomorphism comes from Lemma \ref{perf} and the third isomorphism comes from the fact that perfection is a limit.
\end{proof}
Now we assume that $\operatorname{trdeg}(K)\geq 1$. The following argument is the only thing that breaks when working over a finite field. We need $K$ to admit a nontrivial valuation.
\begin{prop}[Decompletion]\label{2.11}
    If $\xi\in \hone^{\Gamma_K}\cap\: \operatorname{Im}\chi$ and $\operatorname{Im}f_\xi\subset \operatorname{Im}i$, then $\xi\in\operatorname{Im}\OO^*(X)$.
\end{prop}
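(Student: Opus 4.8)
The statement should be read with $\operatorname{Im}\OO^*(X)$ denoting the image of the composite $\OO^*(X)\to\ohat\overset{\chi}{\longrightarrow}\hone$. Since $\chi$ is injective and $\xi\in\operatorname{Im}\chi$, we may write $\xi=\chi(\alpha)$ for a unique $\alpha\in\ohat$, and the claim is equivalent to $\alpha\in\operatorname{Im}\big(\OO^*(X)\to\ohat\big)$. The plan is to make $\ohat$ completely explicit, convert the hypothesis $\operatorname{Im}f_\xi\subset\operatorname{Im}i$ into an integrality statement by pairing with discrete valuations on $K$, and finally descend along $\Gamma_K$.

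First I would pin down the structure of the units. Since $X_0$ is geometrically connected, affine, of finite type with $X_0(K)\neq\varnothing$, the Rosenlicht unit theorem gives $\OO^*(\overline{X_0})=\overline{K}^\times\times\Lambda_0$ with $\Lambda_0$ finitely generated free; passing to the perfection (Frobenius acts by multiplication by $p$ on $\Lambda_0$ and invertibly on $\overline{K}^\times$) yields a $\Gamma_K$-equivariant splitting $\OO^*(\xbar)=\overline{K}^\times\times\Lambda$ with $\Lambda\cong\ZZ[1/p]^{\,r}$, where the splitting is cut out by a $K$-point $\overline{x}_0$ via $\Lambda=\{u:u(\overline{x}_0)=1\}$. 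Because $\overline{K}^\times$ is prime-to-$p$ divisible, the Kummer tower collapses to $\ohat=\varprojlim_{p\nmid n}\Lambda/n\Lambda=\widehat\Lambda\cong(\widehat\ZZ_\ptp)^{\,r}$; moreover Galois descent together with Hilbert 90 for the $\overline{K}^\times$-factor identifies $\operatorname{Im}(\OO^*(X)\to\ohat)$ with $\Lambda^{\Gamma_K}\subset\widehat\Lambda$. Fixing a basis $u_1,\dots,u_r$ of $\Lambda$ and coordinates $\alpha=(a_1,\dots,a_r)\in(\widehat\ZZ_\ptp)^{\,r}$, the goal becomes $\alpha\in\Lambda^{\Gamma_K}$, and it suffices to prove $a_i\in\ZZ[1/p]$ for each $i$.

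Next I would exploit $\operatorname{trdeg}_{\FF_p}K\geq1$. Unwinding the construction of $f_\xi$ in Proposition \ref{2.10} through the description of $\chi$ by Kummer classes (Lemma \ref{commdiag}), for a geometric point $\overline{y}\in X(\overline{K})$ defined over a finite $E\supset K$ one has $f_\xi(\overline{y})=\big(\prod_i u_i(\overline{y})^{a_i}\big)\in\varprojlim_{p\nmid n}E^\times/E^{\times n}$. The hypothesis $\operatorname{Im}f_\xi\subset\operatorname{Im}i$ furnishes, for each such $\overline{y}$, an element $c\in\overline{K}^\times$ with $f_\xi(\overline{y})=i(c)$. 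Any discrete (value group $\ZZ$) valuation $w$ on $E$ induces $\widehat{w}\colon\varprojlim_{p\nmid n}E^\times/E^{\times n}\to\widehat\ZZ_\ptp$, and applying it to $f_\xi(\overline{y})=i(c)$ gives the key relation $\sum_i a_i\,w(u_i(\overline{y}))=w(c)\in\ZZ\subset\widehat\ZZ_\ptp$, i.e.\ each linear combination $\sum_i a_i\,w(u_i(\overline{y}))$ lies in the image of $\ZZ$.

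Finally I would show the integer vectors $\big(w(u_1(\overline{y})),\dots,w(u_r(\overline{y}))\big)$, as $\overline{y}$ and $w$ vary, span $\QQ^r$. A nonzero integral relation $\sum_i c_i\,w(u_i(\overline{y}))=0$ holding for all $\overline{y},w$ would force the unit $\prod_i u_i^{c_i}\in\OO^*(\xbar)$ to have vanishing valuation at every geometric point, hence to take values in $\overline{\FF_p}^\times$; but this product is nonconstant (the $u_i$ are independent in $\Lambda$ and $(c_i)\neq0$), so it is dominant onto $\GG_m$ and assumes a value transcendental over $\FF_p$, detected by some discrete valuation — a contradiction. Selecting $r$ independent such vectors as the rows of a matrix $M\in M_r(\ZZ)\cap\operatorname{GL}_r(\QQ)$, the relations $M\vec{a}\in\ZZ^{\,r}$ give $\vec{a}=M^{-1}(M\vec{a})\in\QQ^{\,r}$, whence $a_i\in\QQ\cap\widehat\ZZ_\ptp=\ZZ[1/p]$ and $\alpha\in\Lambda$. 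Combining with $\xi\in\hone^{\Gamma_K}$ and the equivariance and injectivity of $\chi$ yields $\alpha\in\Lambda\cap(\widehat\Lambda)^{\Gamma_K}=\Lambda^{\Gamma_K}$, i.e.\ $\xi\in\operatorname{Im}\OO^*(X)$. The main obstacle is the spanning step: producing enough discrete valuations and geometric points to pin down all the coordinates $a_i$. This is exactly where $\operatorname{trdeg}(K)\geq1$ is used, and where the finite-field case breaks down.
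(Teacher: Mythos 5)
Your proposal is correct and follows essentially the same route as the paper: split off the constants via the Rosenlicht unit theorem using the rational base point, identify $\ohat$ with $\widehat{\ZZ}_\ptp^{\,r}$ on a $\ZZ[1/p]$-basis of units, pair the hypothesis $\operatorname{Im}f_\xi\subset\operatorname{Im}i$ with discrete valuations on finite extensions $E\supset K$ (this is exactly where $\operatorname{trdeg}(K)\geq1$ enters) to get $\QQ$-linear constraints on the exponents, and conclude $a_i\in\QQ\cap\widehat{\ZZ}_\ptp=\ZZ[1/p]$ before descending along $\Gamma_K$. The only difference is organizational: where you let both the point and the valuation vary and argue that the valuation vectors span $\QQ^r$ (a hypothetical relation forcing a nonconstant unit to take only $\overline{\FF}_p$-values), the paper fixes one valuation per $E$ and proves $\det(N)\neq0$ by induction via cofactor expansion --- the same nonconstancy input in a different wrapper.
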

\begin{proof}
For easy reference, here is the diagram from Lemma \ref{commdiag}:
$$\begin{tikzcd}
                               & \ohat \arrow[rd, "\chi"]       &        \\
\OO^*(X) \arrow[ru] \arrow[rd] &                                & \hone \\
                               & \otop(X) \arrow[ru, "\chi_\top"'] &       
\end{tikzcd}$$

  Let $\OO^*_1(\xbar)$ be the subgroup of $\OO^*(\xbar)$ consisting of units whose corresponding maps from $\xbar$ to $\GG_m$ send $\overline{x}$ (our fixed basepoint) to $1$. The images in $\hone$ of both groups coincide because we may rescale our profinite Kummer torsor by an element of $\kbar$ without changing it ($\kbar$ already contains all roots of all of its elements). We can now use the construction of $f_\xi$ from Proposition \ref{2.10}. Note that there are no constants in $\OO^*_1(\xbar)$.
    
    Since $X_0$ is of finite type, $\OO^*_1(\overline{X_0})$ is a finitely generated free abelian group. Its perfect closure $\varinjlim_{x\mapsto x^p}\OO^*_1(\overline{X_0})$ is $\OO^*_1(\overline{X})$ which is thus seen to be a finitely generated free $\ZZ[1/p]$-module. Let $g_1,\ldots,g_n$ be its generators. Then $\widehat\OO^*_1(\overline{X})\cong\widehat{\OO}^*_1(\xbar)_\ptp$ is isomorphic to $\widehat{\ZZ}_\ptp^n$ with $\widehat{\ZZ}_\ptp$-module generators $g_1,\ldots,g_n$. An element of $\OO^*_1(\xbar)$ corresponds to a map on geometric points $$\xbar(\kbar)=X(\kbar)\longrightarrow\kbar^\times$$ So every completed unit $\hat{u}\in\widehat{\OO}^*_1(\xbar)$ corresponds to a map \begin{align*}\hat{u}:X(\overline K)&\longrightarrow\underset{E\supset K}{\varinjlim}\:\underset{p\,\nmid\,n}{\varprojlim}\: E^\times/\:E^{\times n}\end{align*}
   We will abuse notation and freely switch between viewing $\hat u$ as a completed unit and a map. We have $\hat u=g_1^{\varepsilon_1}\ldots g_n^{\varepsilon_n}$ with $\varepsilon_i\in\widehat\ZZ_{\neq p}$ for all $i$. And $\hat{u}=f_{\chi(\hat{u})}$ by Proposition \ref{2.10}. Therefore, $\operatorname{Im}f_{\chi(\hat u)}\subset\operatorname{Im}i$ if and only if $\hat{u}(X(\overline K))\subset i(\overline{K}^\times)$. Assuming that $\hat{u}(X(\overline K))\subset i(\overline{K}^\times)$, we would like to show that $\hat u$ lies in the image of $\OO^*(X)$ i.e., that $\varepsilon_i\in\ZZ[1/p]$ for all $i$. We are guaranteed that it would lie in the image of $\OO^*(X)$ and not just $\OO^*(\overline{X})$ because $\xi\in\hone^{\Gamma_K}$ and not just $\hone$.

   Let $E$ be a finite extension of $K$. Since $\operatorname{trdeg}(E)\geq1$, there exists a non-trivial valuation $\nu:E^\times\longrightarrow\ZZ$. It extends to a map $\hat\nu:\underset{p\,\nmid\, n}{\varprojlim}\:E^\times/E^{\times n}\longrightarrow \widehat{\ZZ}_\ptp$. Consider $x_1,\ldots,x_n\in X(E)$. For every $x_j$, we have $$\sum_i\varepsilon_i\hat\nu(g_i(x_j))=\hat\nu(\hat{u}(x_j))$$ 
   Let $$N=\begin{pmatrix}
       \hat\nu(g_1(x_1))&\ldots&\hat\nu(g_n(x_1))\\
       \vdots&\ddots&\vdots\\
       \hat\nu(g_1(x_n))&\ldots&\hat\nu(g_n(x_n))
       
   \end{pmatrix}$$
By Cramer's Rule we have $$\operatorname{det}(N)\varepsilon_i=P_i(\hat\nu(g_i(x_1)),\ldots,\hat\nu(g_i(x_n)),\hat\nu(\hat{u}(x_1)),\ldots,\hat\nu(\hat{u}(x_n)))=:P_i$$
where the right-hand side is some polynomial in $\hat\nu(g_i(x_1)),\ldots,\hat\nu(g_i(x_n)),\hat\nu(\hat{u}(x_1)),\ldots,\hat\nu(\hat{u}(x_n))$. We have $\hat{u}(X(\overline K))\subset i(\overline{K}^\times)$ by assumption. In particular we have $\det(N),\, P_i\in\ZZ[1/p]\subset\widehat{\ZZ}_\ptp$ because $\nu(i(E^\times))\subset\ZZ[1/p]$. If $\varepsilon\in\widehat{\ZZ}_\ptp$ and $n,n\varepsilon\in\ZZ[1/p]$, then $\varepsilon\in\ZZ[1/p]$. So we may conclude that $\varepsilon_i\in\ZZ[1/p]$ if we can show that $\det(N)\neq 0$. To show this, we use induction on $n$. 

For $n=1$, we have $\det(N)=\hat\nu(g_1(x_1))$. Since $g_1$ is non-constant, it is open. In particular, we may choose $x_1$ so that $\hat\nu(g_1(x_1))\neq0$. Assume we may choose $n-1$ $x_j$'s so that the determinant is nonzero. We have $$\det(N)=\sum(-1)^{i+1}\hat\nu(g_i(x_1))D_i$$
where $D_i$ is the determinant of the matrix obtained by deleting the top row and $i$-th column of $N$.
  By assumption, we may find $x_2,\ldots,x_n$ such that $D_i\neq0$ for all $i$. The inverse image of $\det(N)$ along $\hat\nu$ is $$g'=g_1^{D_1}g_2^{-D_2}\ldots g_n^{(-1)^{n+1}D_n}$$
  Since the $g_i$ are generators of $\OO^*_1(\xbar)$ and the $D_i$ are nonzero, $g'$ is non-constant. Hence, there exists some $x_1$ such that $\hat\nu(g'(x_1))\neq0$. We conclude that $\det(N)\neq0$.
\end{proof}
From now on let $K$ be an infinite, AFG field of positive characteristic. As in the characteristic $0$ case, we have a result on the structure theory of Picard groups of finite type $K$-schemes that will allow us to show that the first condition of Proposition \ref{2.11} holds. 
\begin{thm}[\cite{GJRW}, \href{https://arxiv.org/pdf/alg-geom/9410031\#page=26}{Theorem 6.6}]\label{picx}
    Let $X_0$ be a finite type $K$-scheme. Then $\pic(X_0)$ is of the form
    $$(\text{Countably Generated Free Abelian Group})\oplus\bigoplus_\NN\ZZ/p^n\ZZ\oplus(\text{Finite Group})$$
    for some $n$.
\end{thm}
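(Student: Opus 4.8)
The plan is to realize $\pic(X_0)$ as a member of the class $\mathcal{D}$ of abelian groups of the shape
$$(\text{countably generated free})\oplus(\text{bounded abelian }p\text{-group})\oplus(\text{finite group}),$$
and then to rewrite the middle summand in the stated form $\bigoplus_\NN\ZZ/p^n\ZZ$ using Pr\"ufer's theorem (a bounded abelian $p$-group is a direct sum of cyclic $p$-groups of order dividing some $p^n$). Two observations frame the problem. First, because $K$ is AFG it is \emph{countable}, so every group built from $K$-points, from units, or from the cohomology of coherent sheaves on a finite type $K$-scheme is automatically countable; this is what lets us aim for ``countably generated'' rather than ``finitely generated.'' Second, $\mathcal{D}$ is closed under extensions: one runs the characteristic $0$ argument of Theorem \ref{sns2} for the free and finite parts, keeping the prime-to-$p$ torsion finite, and separately checks that an extension of a bounded $p$-group by a bounded $p$-group is again bounded. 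I would first reduce to $X_0$ affine by choosing a finite affine open cover and applying Zariski Mayer--Vietoris to $H^1(-,\OO^*)=\pic$, controlling the $H^0(-,\OO^*)=\Gamma(\OO^*)$ terms by the fact that unit groups of finite type $K$-schemes are finitely generated modulo the finite group of constants on their normalizations.

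Next I would strip the nilpotents. Writing $\mathcal{N}$ for the nilradical of $\OO_{X_0}$, the exact sequence $1\to 1+\mathcal{N}\to\OO_{X_0}^*\to\OO_{X_{0,\operatorname{red}}}^*\to1$ shows that the kernel of $\pic(X_0)\to\pic(X_{0,\operatorname{red}})$ is a subquotient of $H^1(X_0,1+\mathcal{N})$. Filtering $1+\mathcal{N}$ by the powers $\mathcal{N}^i$ identifies the graded pieces with coherent $\OO_{X_{0,\operatorname{red}}}$-modules, and since $\operatorname{char}K=p$ the sheaf of groups $1+\mathcal{N}$ has exponent dividing $p^m$ once $\mathcal{N}^{p^m}=0$ (the ``freshman's dream'' $(1+n)^{p^m}=1+n^{p^m}$). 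Hence this kernel is a countable bounded $p$-group, so it lies in $\mathcal{D}$, and by the extension-closedness of $\mathcal{D}$ it suffices to treat $X_{0,\operatorname{red}}$; I may thus assume $X_0$ reduced.

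For reduced $X_0$ I would run the conductor (Milnor) square of the normalization $\widetilde{X_0}\to X_0$ exactly as in Theorem \ref{sns2}, producing the Mayer--Vietoris sequence relating $\pic(X_0)$, $\pic(\widetilde{X_0})$, $\pic(Z)$, and the unit groups along the non-normal locus $Z$. The term $\pic(\widetilde{X_0})$ is finitely generated by Mordell--Weil--N\'eron--Lang (valid in characteristic $p$ over an AFG field, the constant field being finite), and the same holds for the normal $Z$ after excising its own non-normal locus. The novelty over characteristic $0$ is that $X_0$ need not be seminormal, so the relevant conductor subscheme can be non-reduced; its contribution to the unit-quotient terms $\Lambda^*/E_1^*E_2^*$ of [\cite{GJRW}] now carries, in addition to the finite prime-to-$p$ torsion and free part supplied by [\cite{GJRW}, Theorem 6.4], a purely additive piece coming from $1+(\text{nilpotents})$, which in characteristic $p$ is a countable bounded $p$-group. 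Assembling these through the extension-closedness of $\mathcal{D}$ yields $\pic(X_0)\in\mathcal{D}$.

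The main obstacle is precisely this last bookkeeping: one must show that the prime-to-$p$ torsion remains \emph{finite} throughout, while the $p$-torsion is permitted to be infinite of bounded exponent. Concretely, the additive group $\GG_a$ that governs non-seminormal gluing is, in characteristic $p$, an infinite $\FF_p$-vector space, and it is exactly this that produces the infinite summand $\bigoplus_\NN\ZZ/p^n\ZZ$ absent in characteristic $0$; the work is in verifying that these additive contributions never leak into the prime-to-$p$ torsion and that the bound on their exponent is uniform, so that the three pieces of the decomposition stay genuinely separated. I expect the careful torsion analysis of the unit-quotient terms (the characteristic $p$ strengthening of [\cite{GJRW}, Theorem 6.4]) to be the crux.
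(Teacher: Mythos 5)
First, a framing point: the paper does not prove this statement at all --- it is imported wholesale as a citation to [\cite{GJRW}, Theorem 6.6], so there is no internal proof to compare against. Your proposal is a reasonable reconstruction of the GJRW d\'evissage (reduce to affine, strip nilpotents using that $1+\mathcal{N}$ has exponent $p^m$ in characteristic $p$, then run the conductor square against the normalization), and the nilpotent step in particular is correct and complete. But as a proof the proposal has gaps beyond the one you flag yourself.

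Concretely: (1) The claim that ``unit groups of finite type $K$-schemes are finitely generated modulo the finite group of constants on their normalizations'' is false as stated. Over an AFG field $K$ of characteristic $p$ and transcendence degree $\geq 1$, the relevant group of constants is $L^\times$ for $L$ a finite product of finite extensions of $K$, and $K^\times$ itself (e.g.\ $\FF_p(t)^\times$) is an infinite direct sum of copies of $\ZZ$ times a finite cyclic group --- it is not finite and not finitely generated. This is not a side issue: it is precisely the source of the \emph{countably generated free} summand in the conclusion, so the $H^0(-,\OO^*)$ terms in your Mayer--Vietoris reductions must be placed in the class $\mathcal{D}$ by a separate structure result for $L^\times$, not dismissed as finitely-generated-mod-finite. (2) The step ``the same holds for the normal $Z$ after excising its own non-normal locus'' silently uses $\pic(Z)\hookrightarrow\pic(Z\minus Q)$, which by Lemma \ref{pics2} requires $Z$ to be $S_2$; a general non-normal locus of a non-seminormal finite type scheme need not be $S_2$, so excision is unavailable. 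The standard repair is induction on dimension, treating $\pic(Z)$ and $\pic(Y)$ via the inductive hypothesis as members of $\mathcal{D}$ rather than as finitely generated groups. (3) Most importantly, $\mathcal{D}$ is closed under extensions and subgroups but \emph{not} under quotients (a quotient of a countably generated free group can be any countable abelian group --- the paper's Sanity Check \ref{sans} exhibits exactly such a quotient, $k^+/\ZZ$, with infinitely divisible torsion). The kernel $D$ of $\pic(X_0)\to\pic(\widetilde{X_0})\oplus\pic(Z)$ is a quotient of $\Lambda^*/E_1^*E_2^*$ by the image of a finitely generated group, so membership of $D$ in $\mathcal{D}$ genuinely requires the fine analysis of that specific quotient --- the characteristic-$p$ analogue of [\cite{GJRW}, Theorem 6.4] --- which you correctly identify as the crux but then defer rather than prove. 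As it stands the proposal reduces the theorem to the hard part of GJRW's own argument rather than establishing it.
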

We have everything we need to prove the main theorem.
\begin{thm}\label{main}
    Let $X=X_0^\perf$ be the perfection of a finite type $K$-scheme and let $Y$ be any finite type $K^{p^{-\infty}}-$scheme. Then the natural map
    $$\mor_{K^{p^{-\infty}}}(X,Y)\longrightarrow\mor_{K_\et}^\bullet(X_\et,Y_\et)$$
    is a bijection.
\end{thm}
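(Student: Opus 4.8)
The plan is to treat injectivity and surjectivity separately, the former being immediate and the latter absorbing all the work. Injectivity is precisely Proposition \ref{inj}, so I focus on surjectivity. By Proposition \ref{suff} it suffices to take the target to be $\GG_m$: for every geometrically connected affine $U$ that is \'etale over $X$, with $U(E)\neq\varnothing$ for some finite $E\supset K$, and every admissible $\varphi\in\mor_{E_\et}^\bullet(U_{E,\et},(\GG_{m,E})_\et)$, I must produce a morphism of schemes $U_E\to\GG_{m,E}$ agreeing with $\overline\varphi$ on geometric points. By Lemma \ref{2.13} such a $U$ is again the perfection of a finite type scheme, so after base change to $E$ and renaming I may assume $X=X_0^\perf$ is geometrically connected and affine with $X(K)\neq\varnothing$, and that I am handed $\varphi\in\otop(X)$ with $\xi:=\chi_\top(\varphi)\in\hone^{\Gamma_K}$.

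The crux is to show $\operatorname{Im}\chi_\top\subset\operatorname{Im}\chi$, i.e.\ that the injection $\chi\colon\ohat\hookrightarrow\hone$ is in fact surjective. From the Kummer long exact sequence recorded above, $\operatorname{coker}\chi$ embeds into the (prime-to-$p$) Tate module $\widehat{T}(\pic(\xbar))$, so it suffices to show this vanishes. This is where the positive-characteristic hypotheses enter. By Lemma \ref{picperf} we have $\pic(\xbar)\cong\pic(\overline{X_0})[1/p]$, and by Theorem \ref{picx} the group $\pic(\overline{X_0})$ is a countably generated free abelian group together with a summand $\bigoplus_\NN\ZZ/p^n\ZZ$ and a finite group. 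Inverting $p$ annihilates the $p$-primary summand and leaves only the prime-to-$p$ torsion of the finite group, which is bounded; hence $\widehat{T}(\pic(\xbar))=0$ and $\chi$ is an isomorphism. In particular $\xi\in\hone^{\Gamma_K}\cap\operatorname{Im}\chi$, verifying the first hypothesis of the decompletion Proposition \ref{2.11}.

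To verify the second hypothesis I would normalize the basepoint. Since $\overline x$ comes from a $K$-rational point and $\overline\varphi$ is $\Gamma_K$-equivariant, $\overline\varphi(\overline x)\in(\kbar^\times)^{\Gamma_K}=K^\times$; translating $\varphi$ by this constant (a genuine morphism $X\to\GG_m$) I may assume $\overline\varphi(\overline x)=1$. Proposition \ref{2.10} then gives $i(\overline\varphi(\overline y))=f_\xi(\overline y)$ for every $\overline y\in X(\kbar)$, so $\operatorname{Im}f_\xi\subset\operatorname{Im}i$. Because $K$ is infinite and absolutely finitely generated of positive characteristic, $\operatorname{trdeg}(K)\geq1$ and $K$ carries a nontrivial valuation, so Proposition \ref{2.11} applies and yields $\xi\in\operatorname{Im}\OO^*(X)$, say $\xi=\chi(u)$ for some $u\in\OO^*(X)$, i.e.\ a morphism $X\to\GG_m$. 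Rescaling $u$ by the constant $u(\overline x)\in K^\times$ so that $u(\overline x)=1$ (which leaves its Kummer class unchanged), Proposition \ref{2.10} applied to $u_\et$ gives $i(u(\overline y))=f_\xi(\overline y)=i(\overline\varphi(\overline y))$, and since $i$ is injective (the field $K^\times$ has no infinitely divisible prime-to-$p$ elements, $K$ being AFG) we conclude $u=\overline\varphi$ on all geometric points. Undoing the basepoint translation produces the required scheme morphism, and Proposition \ref{suff} upgrades this to surjectivity.

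I expect the principal obstacle to be the vanishing of $\widehat{T}(\pic(\xbar))$: this is the one place where the absolute finite generation of $K$ (through the structure theorem \ref{picx}) must combine with the perfection (through Lemma \ref{picperf}), and it is the positive-characteristic substitute for Voevodsky's appeal to Mordell--Weil--N\'eron--Lang. A close second is the decompletion step, which is precisely the argument that fails over finite fields and relies on extracting a nontrivial valuation from $\operatorname{trdeg}(K)\geq1$.
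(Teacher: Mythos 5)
Your overall architecture --- injectivity via Proposition \ref{inj}, reduction to $\GG_m$ via Proposition \ref{suff} and Lemma \ref{2.13}, basepoint normalization, then Propositions \ref{2.10} and \ref{2.11} --- is exactly the paper's. But there is a genuine error at the step you yourself identify as the crux. You assert that $\operatorname{Im}\chi_\top\subset\operatorname{Im}\chi$ amounts to $\chi$ being surjective onto all of $\hone$, and you then try to prove the latter by showing $\widehat{T}(\pic(\xbar))=0$, invoking Theorem \ref{picx} for $\pic(\overline{X_0})$. This fails twice over. First, Theorem \ref{picx} is a statement about schemes of finite type over the AFG field $K$; it does not apply to $\overline{X_0}$, which is of finite type over $\kbar$, and $\kbar$ is not absolutely finitely generated. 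Second, the conclusion is simply false: if $X_0$ is the complement of the origin in an elliptic curve $E$ over $K$, then $\pic(\overline{X_0})\cong E(\kbar)$, whose prime-to-$p$ Tate module is $\prod_{\ell\neq p}T_\ell(E)\neq0$, and inverting $p$ (Lemma \ref{picperf}) does not kill this. So $\chi$ is not surjective in general, and your argument for the first hypothesis of Proposition \ref{2.11} collapses.

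What you actually need is weaker: since $\operatorname{Im}\chi_\top\subset\hone^{\Gamma_K}$, it suffices to show $\hone^{\Gamma_K}\subset\operatorname{Im}\chi$. The paper gets this by taking $\Gamma_K$-fixed points of the left exact sequence, which yields
$$1\longrightarrow \widehat{\OO}^*(\xbar)^{\Gamma_K}\overset{\chi}{\longrightarrow}\hone^{\Gamma_K}\longrightarrow\widehat{T}(\pic(X)),$$
so that the relevant Tate module is that of $\pic(X)$ over the AFG field $K$, where Lemma \ref{picperf} and Theorem \ref{picx} do apply and give $\widehat{T}(\pic(X_0)[1/p])=0$. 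Descending to the finitely generated base \emph{before} taking the Tate module is the entire point of this step; it is the positive-characteristic analogue of Voevodsky's appeal to Mordell--Weil--N\'eron--Lang, which likewise only controls Picard groups over finitely generated fields. The remainder of your argument (Proposition \ref{2.10}, decompletion via the nontrivial valuation, injectivity of $i$) is correct and matches the paper.
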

\begin{proof}
    We already know that the map is injective by Proposition \ref{inj} (noting that $X_0^\perf\cong X_{0,K^{p^{-\infty}}}^\perf$). By Proposition \ref{suff}, it suffices to show that, for all $U\in X_\et$ and all $\varphi\in\otop(U)=\mor_{K_\et}(U_\et,\GG_{m,\et})$, there exists a map $\widetilde{\varphi}:U\longrightarrow \GG_m$ coinciding with $\varphi$ on geometric points. By Lemma \ref{2.13}, the following argument applies to all \'etale opens over $X$. In particular, it suffices to just consider $X$. Pick some $\varphi\in\otop(X)$. We may assume without loss of generality that $\overline\varphi(\overline{x})=1$. Proposition \ref{2.11} tells us that $\chi_\top(\varphi)$ comes from a unit if we can show that $\chi_\top(\varphi)\in H^1_\et(\xbar,\widehat{\ZZ}(1))^{\Gamma_K}\cap\:\operatorname{Im}\chi$ and $\operatorname{Im}f_{\chi_\top(\varphi)}\subset \operatorname{Im}i$. By Proposition \ref{2.10}, the latter condition is satisfied. Hence, $\chi_\top(\varphi)$ comes from a unit if $\operatorname{Im}\chi_\top\subset\operatorname{Im}\chi$. Consider the left exact sequence
    $$1\longrightarrow\ohat\overset{\chi}{\longrightarrow}\hone\longrightarrow \widehat{T}(\pic(\xbar))$$
    Taking Galois-fixed points gives 
    $$1\longrightarrow \widehat{\OO}^*(\xbar)^{\Gamma_K}\overset{\chi}{\longrightarrow} \hone^{\Gamma_K}\longrightarrow\widehat{T}(\pic(X))$$
    We have $\pic(X)\cong\pic(X_0)[1/p]$ by Lemma \ref{picperf}. So we have $\widehat{T}(\pic(X))\cong \widehat{T}(\pic(X_0)[1/p])$. But Theorem \ref{picx} tells us that $\widehat{T}(\pic(X_0)[1/p])=0$. Hence, $$\operatorname{Im}\chi\supset\chi(\widehat{\OO}^*(\xbar)^{\Gamma_K})=\hone^{\Gamma_K}\supset\operatorname{Im}\chi_\top$$
    Let $\widetilde{\varphi}$ be the unit that maps to $\chi_\top(\varphi)$. We have $\chi_\top(\widetilde{\varphi}_\et)=\chi_\top(\varphi)$ by Lemma \ref{commdiag}. So we know that $f_{\chi_\top(\widetilde{\varphi}_\et)}(\overline{x})=f_{\chi_\top(\varphi)}(\overline{x})$ for all $\overline{x}\in X(\kbar)$. Hence, by Proposition \ref{2.10}, we have $i(\overline{\widetilde{\varphi}_\et}(\overline{x}))=i(\overline{\varphi}(\overline{x}))$ for all $\overline{x}\in X(\kbar)$. Thus, since $i$ is injective ($K$ has no infinitely divisible elements), $\overline{\widetilde{\varphi}_\et}(\overline{x})=\widetilde{\varphi}(\overline{x})=\overline{\varphi}(\overline{x})$ for all $\overline{x}\in X(\kbar)$. In particular, $\widetilde{\varphi}$ coincides with $\varphi$ on geometric points.
\end{proof}
\begin{cor}\label{cor}
    Let $X$ be the perfection of a $K$-scheme of finite type. Let $Y=Y_0^\perf$ be the perfection of a $K$-scheme of finite type. Then the natural map $$\mor_{K^{p^{-\infty}}}(X,Y)\longrightarrow\mor_{K_\et}^\bullet(X_\et,Y_\et)$$
    is bijective.
\end{cor}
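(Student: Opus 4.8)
The plan is to reduce Corollary \ref{cor} to Theorem \ref{main} by stripping the perfection off of $Y$. Set $Y_0' := Y_{0,K^{p^{-\infty}}}$, the base change of $Y_0$ to $K^{p^{-\infty}}$. This is a finite type $K^{p^{-\infty}}$-scheme, and since perfection commutes with fiber products (it is itself an inverse limit, as in Lemma \ref{2.13}) and $\spec(K^{p^{-\infty}})$ is already perfect, one has $(Y_0')^\perf \cong Y_0^\perf = Y$. Theorem \ref{main}, applied to $X = X_0^\perf$ and the finite type $K^{p^{-\infty}}$-scheme $Y_0'$, then furnishes a bijection
$$\mor_{K^{p^{-\infty}}}(X, Y_0') \longrightarrow \mor_{K_\et}^\bullet\big(X_\et, (Y_0')_\et\big).$$
I would then identify each side of this bijection with the corresponding side of the map in the corollary.

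On the source, since $X$ is perfect and $Y \cong (Y_0')^\perf$, the universal property of the perfection (perfection is right adjoint to the inclusion of perfect schemes, cf.\ \cite{BS}) gives a bijection $\mor_{K^{p^{-\infty}}}(X, Y) \cong \mor_{K^{p^{-\infty}}}(X, Y_0')$, implemented by postcomposition with the projection $\pi \colon Y = (Y_0')^\perf \to Y_0'$. On the target, $\pi$ is a universal homeomorphism, so by topological invariance of the \'etale site its \'etalification $\pi_\et \colon Y_\et \to (Y_0')_\et$ is an equivalence of $\spec(K)_\et$-sites that preserves admissibility; hence postcomposition with $\pi_\et$ yields a bijection $\mor_{K_\et}^\bullet(X_\et, Y_\et) \cong \mor_{K_\et}^\bullet\big(X_\et, (Y_0')_\et\big)$. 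Here I also use $\spec(K)_\et \simeq \spec(K^{p^{-\infty}})_\et$, again by topological invariance, so that the common base of the \'etale sites is well defined.

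Concatenating the three bijections produces a bijection $\mor_{K^{p^{-\infty}}}(X, Y) \cong \mor_{K_\et}^\bullet(X_\et, Y_\et)$. The one thing that genuinely needs checking---and the main, if mild, obstacle---is that this composite is the natural \'etalification map $f \mapsto f_\et$ of the corollary, rather than some twist of it. This is a short diagram chase: the adjunction sends $f$ to $\pi \circ f$, whose \'etalification is $\pi_\et \circ f_\et$, and applying the inverse of the topological-invariance equivalence $\pi_\et$ recovers exactly $f_\et$. Since all three intermediate maps are bijections and the square commutes, the natural map is bijective, which is the assertion of Corollary \ref{cor}.
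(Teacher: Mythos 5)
Your proof is correct and is essentially the paper's argument: the paper likewise reduces to Theorem \ref{main} applied to $Y_{0,K^{p^{-\infty}}}$, using the universal property of the perfection as an inverse limit on the scheme side (your adjunction bijection is exactly the collapse of $\lim_\Phi \mor_{K^{p^{-\infty}}}(X,Y_{0,K^{p^{-\infty}}})$ for perfect $X$) and topological invariance of the \'etale site on the other. Your explicit check that the composite bijection agrees with the natural map $f\mapsto f_\et$ is a welcome detail the paper leaves implicit.
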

\begin{proof}
    We have
    \begin{align*}
        \mor_{K^{p^{-\infty}}}(X,Y)&\cong\lim_\Phi \mor_{K^{p^{-\infty}}}(X,Y_{0,K^{p^{-\infty}}})\\
&\cong\lim_{\Phi_\et}\mor_{K_\et}^\bullet(X_\et,(Y_{0,K^{p^{-\infty}}})_\et)\tag{Theorem \ref{main}}\\
        &\cong \mor_{K_\et}^\bullet(X_\et,Y_\et)\tag{Topological Invariance}
    \end{align*}
\end{proof}
\begin{cor}\label{coriso}
    Let $X$ and $Y$ be perfections of schemes of finite type over $K$. Then $X\cong Y$ if and only if $X_\et\simeq Y_\et$ (over $K^{p^{-\infty}}$ and $\spec(K)_\et$ respectively). \qed
\end{cor}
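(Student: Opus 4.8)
The plan is to derive this purely formally from Corollary \ref{cor}, which furnishes a bijection $\mor_{K^{p^{-\infty}}}(X,Y)\to\mor_{K_\et}^\bullet(X_\et,Y_\et)$ whenever $X$ and $Y$ are perfections of finite type $K$-schemes. The forward implication is the easy one: an isomorphism $X\cong Y$ of $K^{p^{-\infty}}$-schemes induces, by functoriality of $(-)_\et$, a $\spec(K)_\et$-equivalence $X_\et\simeq Y_\et$ (its inverse being the \'etalification of the inverse isomorphism).

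For the converse, suppose we are given a $\spec(K)_\et$-equivalence $X_\et\simeq Y_\et$. Unwinding the definition, this is a pair of $\spec(K)_\et$-morphisms of sites $\varphi:X_\et\to Y_\et$ and $\psi:Y_\et\to X_\et$ together with $2$-isomorphisms $\psi\circ\varphi\cong\operatorname{id}_{X_\et}$ and $\varphi\circ\psi\cong\operatorname{id}_{Y_\et}$. To apply Corollary \ref{cor} I must first check that $\varphi$ and $\psi$ are \emph{admissible}.

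This admissibility is the only non-formal point, and it is where I expect the main (if modest) obstacle to lie. By the functoriality of the association $\varphi\leadsto|\varphi|$, together with the fact that the construction of $|\varphi|$ depends only on the $2$-isomorphism class of $\varphi$ (a natural isomorphism $\varphi^{-1}\Rightarrow\psi^{-1}$ identifies the subspaces $|\operatorname{Im}\varphi^{-1}(U)|$), the induced maps $|\varphi|:|X|\to|Y|$ and $|\psi|:|Y|\to|X|$ satisfy $|\psi|\circ|\varphi|=\operatorname{id}_{|X|}$ and $|\varphi|\circ|\psi|=\operatorname{id}_{|Y|}$; that is, they are mutually inverse homeomorphisms. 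Since being a closed point is a purely topological condition, a homeomorphism carries closed points to closed points, so both $\varphi$ and $\psi$ are admissible.

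With admissibility established, Corollary \ref{cor} produces morphisms of $K^{p^{-\infty}}$-schemes $f:X\to Y$ and $g:Y\to X$ with $f_\et\cong\varphi$ and $g_\et\cong\psi$. Then $(g\circ f)_\et\cong\psi\circ\varphi\cong\operatorname{id}_{X_\et}\cong(\operatorname{id}_X)_\et$, and since the comparison map $\mor_{K^{p^{-\infty}}}(X,X)\to\mor_{K_\et}^\bullet(X_\et,X_\et)$ is injective (again by Corollary \ref{cor}), we conclude $g\circ f=\operatorname{id}_X$; the symmetric argument gives $f\circ g=\operatorname{id}_Y$. Hence $f$ is an isomorphism with inverse $g$, so $X\cong Y$, completing the converse.
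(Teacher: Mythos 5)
Your proof is correct and is exactly the formal deduction from Corollary \ref{cor} that the paper intends (the paper marks the corollary as immediate and omits the argument). Your explicit check that an equivalence of sites is automatically admissible --- via the functoriality of $\varphi\leadsto|\varphi|$ and its invariance under $2$-isomorphism, making $|\varphi|$ and $|\psi|$ mutually inverse homeomorphisms --- is a worthwhile detail that the paper leaves implicit.
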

\begin{schk}[Multiplication by $p$ on $\GG_{m}$]\label{schkgm}
    The $p$-th power map $p:\GG_m\overset{x\mapsto x^p}{\longrightarrow}\GG_m$ gives rise to an equivalence on \'etale sites $p^*:\GG_{m,\et}\overset{\sim}{\longrightarrow}\GG_{m,\et}$ whose inverse is not realized by a morphism of schemes. We can understand this concretely by looking at classes. 
    
    Let $\GG_m=\spec(K^{p^{-\infty}}[t^{\pm1}])$. The $p$-th power map is the unit $t^p$. So the corresponding class at finite level $n$ (prime-to$-p$) is the \'etale $\mu_n$-torsor $\spec(\kbar[t^{\pm1},X]/(X^n-t^p))$. As a class in $H^1_\et(\GG_{m,\kbar},\mu_n)\cong\ZZ/n\ZZ$, this corresponds to $p\mod n$. The corresponding element in $H^1_\et(\GG_{m,\kbar},\widehat{\ZZ}_\ptp)\cong\widehat{\ZZ}_\ptp$ is $p$. The torsor $\lim_{\,p\,\nmid\,n} \spec(\kbar[t^{\pm1},X]/(X^n-t))$ corresponds to $1\in\widehat{\ZZ}_\ptp$. And raising $t$ to a power on the torsor side corresponds to multiplication on the class side.
    
    The inverse of $p^*$ should give a map on \'etale fundamental groups that is the inverse of the map on \'etale fundamental groups induced by $p^*$. In particular, the class associated to the inverse of $p^{*}$ should be the inverse of the class associated to $p^*$. This class is $1/p\in\widehat{\ZZ}_\ptp$. It lies both in the image of $\chi$ and the image of $\chi_\top$, but it does not come from a unit. We conclude that decompletion (Proposition \ref{2.11}) cannot go through for generic imperfect schemes.   
    To see the failure of the decompletion argument in the case of $\GG_m$, we note that $\OO^*_1(\GG_{m,\kbar})\cong \ZZ$ is not a free $\ZZ[1/p]$-module. In particular, we would need to show that $\varepsilon_i\in\ZZ$, not $\ZZ[1/p]$. But the analogue of [\cite{Voe}, \href{https://www.math.ias.edu/vladimir/sites/math.ias.edu.vladimir/files/Etale_topologies_published.pdf#page=13}{Lemma 3.1}] must take account of the fact that everything has to be prime-to-p when working in characteristic p. Indeed, the completed unit with image $1/p\in H^1(\GG_{m,\kbar},\widehat{\ZZ}_\ptp)$ is given by $\hat u=t^{1/p}$ (where $1/p$ is understood as a profinite prime-to$-p$ integer), but $1/p\not\in\ZZ$.
\end{schk}
\clearpage

\end{document}